\documentclass[a4paper,11pt]{article}




\usepackage[a4paper]{geometry}

\usepackage{authblk}

\usepackage{amsmath}
\usepackage{amsthm}
\usepackage{amssymb}
\usepackage{amsfonts}
\usepackage{mathtools}
\usepackage{bm}
\usepackage{bussproofs}

\usepackage{cancel}
\usepackage{lscape}

\usepackage{float}
\usepackage{booktabs}
\usepackage{caption}
\usepackage{adjustbox}

\usepackage{graphicx}
\usepackage{tikz}
\usetikzlibrary{cd,backgrounds,positioning,trees,shapes,arrows,patterns,topaths,calc}
\usepackage[dvipsnames]{xcolor}

\usepackage[
  backend=biber,
  natbib=true,
  maxcitenames=9,
  maxbibnames=99,
  abbreviate=false,
  eprint=false,
  doi=true,
  giveninits=true,
  casechanger=expl3,
]{biblatex}
\DeclareFieldFormat{doi}{\url{https://doi.org/#1}}
\DeclareFieldFormat{url}{\url{#1}}
\addbibresource{references.bib}
\emergencystretch=1em

\usepackage{hyperref}
\hypersetup{
  hidelinks,
  linktoc = all,
  breaklinks = true,
}
\usepackage[nameinlink]{cleveref}

\newtheorem{theorem}{Theorem}
\newtheorem{lemma}[theorem]{Lemma}
\newtheorem{corollary}[theorem]{Corollary}

\newtheorem{proposition}[theorem]{Proposition}

\theoremstyle{definition}
\newtheorem{definition}[theorem]{Definition}
\newtheorem{remark}[theorem]{Remark}
\newtheorem{example}[theorem]{Example}
\newtheorem{notation}[theorem]{Notation}

\theoremstyle{remark}

\numberwithin{theorem}{section}


\newcommand{\alg}[1]{\mathfrak{#1}}
    \newcommand{\A}{\alg{A}}
    \newcommand{\B}{\alg{B}}

\renewcommand{\sp}[1]{\mathfrak{#1}}
    \newcommand{\X}{\sp{X}}
    \newcommand{\Y}{\sp{Y}}

\newcommand{\fr}[1]{\mathcal{#1}}
    \newcommand{\F}{\fr{F}}

\newcommand{\class}[1]{\mathcal{#1}}
    \newcommand{\V}{\class{V}}
    \newcommand{\U}{\class{U}}
    
    \newcommand{\D}{\class{D}}
    
    \newcommand{\C}{\class{C}}
    \newcommand{\R}{\class{R}}
    \renewcommand{\S}{\class{S}}

\newcommand{\fsi}{_\mathrm{fsi}}

\newcommand{\pow}[1]{\mathcal{P} (#1)}
\newcommand{\Log}{\mathsf{Log}}

\newcommand{\logic}[1]{\mathsf{#1}}

    \newcommand{\K}{\logic{K}}
    \newcommand{\Kf}{\logic{K4}}
    \newcommand{\Kff}[2]{\logic{K4^{#1}_{#2}}}
    \newcommand{\Sf}{\logic{S4}}
    \newcommand{\wKf}{\logic{wK4}}
    \newcommand{\IPC}{\logic{IPC}}

\newcommand{\NExt}[1]{\mathsf{NExt}{#1}}

\newcommand{\Sub}{\mathsf{Sub}}

\renewcommand{\phi}{\varphi}
\newcommand{\emp}{\emptyset}


\newcommand{\goodbox}{\hspace{.2ex}\text{%
  \tikz[baseline=-.6ex, rounded corners=.01ex, line width=.12ex]
    {\draw(-.6ex,-.6ex) rectangle (.6ex,.6ex);}}\kern.2ex}
\newcommand{\gooddiamond}{\hspace{.2ex}\text{%
  \tikz[baseline=-.6ex, rounded corners=.01ex, rotate=45, line width=.12ex]
    {\draw(-.5ex,-.5ex) rectangle (.5ex,.5ex);}}\kern.2ex}
\renewcommand{\Box}{\goodbox}
\renewcommand{\Diamond}{\gooddiamond}

\newcommand{\Dia}{\Diamond}

\newcommand{\Boxx}[1]{\Box^{#1}}

\newcommand{\up}{{\uparrow}}

\newcommand{\inj}{\hookrightarrow}
\newcommand{\surj}{\twoheadrightarrow}

\newcommand{\Lor}{\bigvee}
\newcommand{\Land}{\bigwedge}


\newenvironment{acknowledgements}{%
  \begin{abstract}
}{%
  \end{abstract}
}

\renewcommand{\S}{\mathcal{S}}

\begin{document}

\title{Stable Canonical Rules and Formulas for \\ Pre-transitive Logics via Definable Filtration}
\author{Tenyo Takahashi\footnote{\href{mailto:t.takahashi@uva.nl}{t.takahashi@uva.nl}}}
\affil{\small Institute for Logic, Language and Computation, \\ University of Amsterdam}
\date{}
\maketitle

\begin{abstract}
We generalize the theory of stable canonical rules by adopting definable filtration, a generalization of the method of filtration. We show that for a modal rule system or a modal logic that admits definable filtration, each extension is axiomatizable by stable canonical rules. Moreover, we provide an algebraic presentation of Gabbay's filtration and generalize stable canonical formulas and the axiomatization results via stable canonical formulas for $\Kf$ to pre-transitive logics $\Kff{m+1}{1} = \K + \Dia^{m+1} p \to \Dia p$ $(m \geq 1)$. As consequences, we obtain the finite model property of $\Kff{m+1}{1}$-stable logics and a characterization of splitting and union-splitting logics in the lattice $\NExt{\Kff{m+1}{1}}$. There are continuum many $\Kff{m+1}{1}$-stable logics that are neither $\Kf$-stable logics nor subframe logics. Finally, we introduce $m$-stable canonical formulas, strengthening the axiomatization results for these logics.\footnote{This paper is based on \cite[Chapter 3]{TakahashiThesis}.}
\end{abstract}

\section{Introduction}

One of the most powerful tools in the study of the lattice of modal logics and the lattice of superintuitionistic logics is \emph{characteristic formulas}. Characteristic formulas, also called \emph{algebra-based formulas} or \emph{frame-based formulas}, refer to variations of formulas that are defined from finite algebras or finite relational structures (e.g., finite Kripke frames) so that their validity has a semantic characterization. Thus, logics axiomatized by characteristic formulas are often easier to analyze. We refer to \cite{JankovFormulasAxiomatization2022b} for a comprehensive overview of the techniques of characterization formulas for superintuitionistic logics. The first type of characteristic formulas, \emph{Jankov formulas} or \emph{Jankov-de Jongh formulas}, was introduced and studied by Jankov \cite{jankov1963} and independently by de Jongh \cite{dejongh1968} for superintuitionistic logics. Jankov formulas were used in \cite{jankov1963} to construct continuum many superintuitionistic logics. Their modal logic analog, \emph{Fine formulas} or \emph{Jankov-Fine formulas}, was introduced by Fine \cite{fineK4I}. A generalization to $n$-transitive modal logics was constructed by Rautenberg \cite{rautenbergSplittingLatticesLogics1980} using algebraic methods. Further development includes \emph{subframe formulas}, \emph{cofinal subframe formulas}, and \emph{canonical formulas} (see, e.g., \cite[Chapter 9]{czModalLogic1997}). A remarkable feature of canonical formulas is that they axiomatize all transitive logics \cite{zakharyaschevCanonicalFormulas1}, which makes them particularly useful. For various applications of the technique of canonical formulas, see \cite[Chapters 9-16]{czModalLogic1997}. 

Recently, Je\v r\'abek \cite{Jeřábek_2009} generalized the idea of canonical formulas to inference rules and defined \emph{canonical rules}. These rules are based on a version of \emph{selective filtration} (see, e.g., \cite[Section 9.3]{czModalLogic1997}) and axiomatize all rule systems over $\Kf$. Je\v r\'abek \cite{Jeřábek_2009} applied canonical rules to construct admissible bases and to provide an alternative proof of the decidability of the admissibility problem for transitive logics (such as $\Kf$, $\Sf$, and $\logic{GL}$) as well as the intuitionistic logic. 

Bezhanishvili et al. \cite{stablecanonicalrules}, motivated by the method of \emph{filtration} (see, e.g., \cite[Section 2.3]{blackburnModalLogic2001} and \cite[Section 5.3]{czModalLogic1997}), defined \emph{stable canonical rules} and showed that they axiomatize all rule systems over the least modal rule system $\S_\K$. They also introduced \emph{stable canonical formulas} for $\Kf$ as an alternative to canonical formulas. Applying stable canonical rules and formulas, \cite{stablemodallogic} showed the finite model property (the fmp) of $\Kf$-stable logics (see \Cref{3: Def M-stable logic}), and \cite{admissiblebases} provided yet another proof of the decidability of the admissibility problem for transitive logics; more applications include \cite{BezhanishviliGhilardi2014,BLOKESAKIATHEOREMSSTABLE2025a}.

However, the situation is much trickier if we drop transitivity, even in the pre-transitive setting. Recall that \emph{$m$-transitivity} refers to the property defined by $\Dia^{m+1} p \to \Dia^m p \lor \cdots \lor p$ for $m \geq 1$ and \emph{pre-transitivity} refers to them as a whole. Pre-transitive logics are natural targets when trying to extend studies from transitive logics to non-transitive ones, for the clear similarity between the properties. The decidability of the admissibility problem in pre-transitive logics is open, and is a long-standing open problem for $\K$ \cite[Problem 16.4]{czModalLogic1997}. No variant of subframe formulas is known for pre-trasitive logics as pointed out in \cite[Section 1]{TwoTypesFiltrations2025}, and neither are stable canonical formulas. A main challenge here is that, while selective filtration is useful mostly for transitive logics and standard filtration is also constructed for some non-transitive logics such as $\K$, they do not easily apply to pre-transitive logics. In fact, the fmp for pre-transitive logics in general is another long-standing open problem \cite[Problem 11.2]{czModalLogic1997}.

In this paper, we generalize the theory of stable canonical rules and formulas by adopting \emph{definable filtration}, a generalization of standard filtration that appeared earlier in \cite{ageneralfiltrationmethod1972} and was explicitly introduced in \cite{Kikot2020}. Recall that the method of filtration uses a finite subformula-closed set $\Theta$ to transform a Kripke model into a finite one, by identifying points that agree on all formulas in $\Theta$ and imposing requirements on the accessibility relation according to $\Theta$. Definable filtration, on the other hand, can use a larger set $\Theta' \supseteq \Theta$ to identify points so that the equivalence relation is finer, while the requirements on the accessibility relation are still determined by $\Theta$. A typical example of definable filtration is Gabbay's filtration \cite{ageneralfiltrationmethod1972} for the pre-transitive logics $\Kff{m+1}{1} = \K + \Dia^{m+1} p \to \Dia p$ $(m \geq 1)$. We will provide an algebraic proof that these logics admit definable filtration in \Cref{Sec 4}. Note that it is open if they admit standard filtration; see \Cref{Rem mismatch} for why standard approaches would fail.

Using this definable filtration, as a first step towards characteristic formulas for non-transitive logics, we generalize stable canonical formulas and the axiomatization result for $\Kf$ \cite{stablecanonicalrules} to the logics $\Kff{m+1}{1}$. This is achieved by a generalization of the axiomatization results via stable canonical rules, allowing the base rule system or logic to be any rule system or logic that admits definable filtration. These generalizations yield the following applications:
\begin{itemize}
    \item We observe that the assumption for \emph{$\logic{M}$-stable logics} to have the fmp in the result in \cite{stablemodallogic}, namely, admitting (standard) filtration, can be weakened to admitting definable filtration. This allows us to obtain the fmp for $\Kff{m+1}{1}$-stable logics, which include continuum many logics that are neither $\Kf$-stable logics nor subframe logics (see \Cref{Thm conti many K4m1 stable}). Thus, these logics form new large classes of non-transitive logics that have the fmp.
    \item We obtain an axiomatic characterization of splitting and union-splitting logics in the lattice $\NExt{\Kff{m+1}{1}}$. 
    \item Another application is in \cite{takahashi2025choppingfinelyfinitecountermodels} (see also \cite[Chapter 4]{TakahashiThesis}), where a combinatorial method is introduced to prove the fmp for large classes of logics and rule systems. This method makes an essential use of the axiomatization results via stable canonical rules, reducing the problem of constructing finite countermodels to arbitrary formulas to that of constructing finite countermodels to stable canonical rules.
\end{itemize}

Finally, we present a proper subclass of the class of stable canonical formulas (up to equivalence) that axiomatizes all extensions of $\Kff{m+1}{1}$. These formulas are called \emph{$m$-stable canonical formulas} and encode the structure of finite modal algebras or finite modal spaces up to $m$ steps, which better matches the nature of pre-transitive logics. We show that they are sufficient for axiomatizing all extensions of $\Kff{m+1}{1}$.

This paper is organized as follows. \Cref{Sec 2} reviews the necessary preliminaries, including stable canonical rules. \Cref{Sec 3} discusses definable filtrations from an algebraic perspective and their connection to the theory of stable canonical rules. \Cref{Sec 4} presents an algebraic formulation of Gabbay's filtration for pre-transitive logics $\Kff{m+1}{1}$. Based on this, \Cref{Sec 5} develops the theory of stable canonical formulas for $\Kff{m+1}{1}$. \Cref{Sec 6} defines $m$-stable canonical formulas and shows the axiomatization result via them. Finally, \Cref{Sec 7} outlines potential directions for future work.

\section{Preliminaries} \label{Sec 2}

We assume familiarity with modal logic. We refer to \cite{blackburnModalLogic2001, czModalLogic1997} for basic syntax and Kripke semantics. We will only work with normal modal logics in this paper, so we simply call them logic. For a logic $L$, $\NExt{L}$ denotes the lattice of normal extensions of $L$. We will use $\Box^m \phi$ as an abbreviation of $\Box \cdots \Box \phi$ with $m$ many $\Box$ and $\Boxx{\leq m}$ as an abbreviation of $\phi \land \cdots \land \Box^m \phi$; the dual abbreviation applies to $\Dia$.

Recall that a \emph{modal algebra} is a pair $\A = (A, \Dia)$ of a Boolean algebra $A$ and a unary operation $\Dia$ on $A$ such that $\Dia 0 = 0$ and $\Dia (a \lor b) = \Dia a \lor \Dia b$. Valuations, satisfaction, and validity are defined as usual. The above abbreviation applies to operations on modal algebras as well. Recall that a \emph{modal space} is a pair $\X = (X, R)$ of a Stone space and a binary relation $R \subseteq X \times X$ such that $R[x]$ is closed for each $x \in X$ and the set $R^{-1}[U]$ is clopen for each clopen set $U \subseteq X$. Modal spaces are also known as \emph{descriptive frames}. Finite modal spaces are identified with finite Kripke frames. Continuous p-morphisms between modal spaces are simply called p-morphisms. It is well known that the category of modal algebras and homomorphisms is dually equivalent to the category of modal spaces and p-morphisms. When drawing modal spaces, we adopt the convention that $\bullet$ denotes an irreflexive point and $\circ$ denotes a reflexive point.

We sketch the construction of the dual equivalence functors, which is built on top of \emph{Stone duality} (see, e.g., \cite[Section 5.4]{blackburnModalLogic2001}). Given a modal algebra $\A$, the dual modal space of $\A$ is $\A_* = (A_*, R_{\Dia})$ where 
\begin{enumerate}
    \item $A_*$ is the dual Stone space of $A$: $A_*$ as a set is the set of all ultrafilters on $A$, and the topology is generated by the base $\{\beta(a): a \in A\}$, where the function $\beta: A \to \pow{A_*}$ is defined by $\beta(a) = \{x \in A_*: a \in x\}$,
    \item $R_{\Dia}$ is defined by 
    \[x R_{\Dia} y \text{ iff for any $a \in A$, $a \in y$ implies $\Dia a \in x$}. \]
\end{enumerate} 
Given a homomorphism $h: \A \to \B$, the dual p-morphism of $h$ is $h_*: \B_* \to \A_*; x \mapsto h^{-1}[x]$. Conversely, given a modal space $\X = (X, R)$, the dual modal algebra of $\X$ is $\X^* = (X^*,  \Dia_R)$ where 
\begin{enumerate}
    \item $X^*$ is the Boolean algebra of clopen subsets of $X$,
    \item $\Dia_R \: a = R^{-1}[a]$ for $a \in A$.
\end{enumerate}
Given a p-morphism $f: \X \to \Y$, the dual homomorphism is $f^*: \Y^* \to \X^*; a \to f^{-1}[a]$. 

We assume familiarity with basic universal algebra and refer to \cite{UniversalAlgebraFundamentals2011, ACourseInUniversalAlgebra1981} for an introduction. For a class $\class{K}$ of modal algebras, $\V(\class{K})$ (resp. $\U(\class{K})$) denotes the least variety (resp. universal class) containing $\class{K}$. The following characterization of subdirectly irreducible (s.i. for short) modal algebras is from Rautenberg \cite{rautenbergSplittingLatticesLogics1980}. Note that an opremum may not be unique. 

\begin{proposition} \label{2: Prop opremum}
    A modal algebra $\A$ is s.i.~iff $\A$ has an \emph{opremum}, that is, an element $c \in \A$ such that $c \neq 1$ and for any $a \neq 1$, there is $n \in \omega$ such that $\Boxx{\leq n} a \leq c$. 
\end{proposition}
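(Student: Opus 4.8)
The plan is to prove the proposition through the standard correspondence between the congruences of a modal algebra and its open filters, where an \emph{open filter} of $\A = (A, \Dia)$ is a filter $F \subseteq A$ that is closed under $\Box$ (the dual operation $\Box a = \neg \Dia \neg a$), i.e.\ $a \in F$ implies $\Box a \in F$. Recall that $\theta \mapsto 1/\theta = \{a : a \mathbin{\theta} 1\}$ is a lattice isomorphism between $\mathrm{Con}(\A)$ and the lattice of open filters of $\A$, under which the identity congruence corresponds to the trivial open filter $\{1\}$. Consequently $\A$ is s.i.\ iff it has a monolith, which translates via this isomorphism into the condition that the intersection $M := \bigcap\{F : F \text{ a nontrivial open filter}\}$ is itself nontrivial, i.e.\ $M \neq \{1\}$. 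I would take this reformulation as the entry point.

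The key computational step is to identify the open filter $F_a$ generated by a single element $a \neq 1$. Since such a filter must contain $a$, be closed under $\Box$, and be closed under meets and upward, I would show that
\[ F_a = \{b \in A : \Boxx{\leq n} a \leq b \text{ for some } n \in \omega\}. \]
Verifying that the right-hand side really is an open filter is the one place requiring a little care: upward closure and closure under meets follow from the fact that $\Boxx{\leq n} a$ is decreasing in $n$, while closure under $\Box$ uses the monotonicity of $\Box$ together with the inequality $\Box(\Boxx{\leq n} a) \geq \Boxx{\leq n+1} a$. Minimality is immediate, since any open filter containing $a$ must contain every $\Boxx{\leq n} a$. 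I would also note that $F_a \neq \{1\}$ whenever $a \neq 1$, because $a \in F_a$.

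With the shape of $F_a$ in hand, the equivalence essentially reads itself off, yielding both directions simultaneously. Every nontrivial open filter $F$ contains $F_a$ for each $a \in F \setminus \{1\}$, and each such $F_a$ is itself a nontrivial open filter; hence the intersection over all nontrivial open filters coincides with $M = \bigcap\{F_a : a \in A,\ a \neq 1\}$. An element $c \neq 1$ then lies in $M$ iff $c \in F_a$ for every $a \neq 1$, that is, iff for every $a \neq 1$ there is $n \in \omega$ with $\Boxx{\leq n} a \leq c$ — which is exactly the defining property of an opremum. Therefore $M \neq \{1\}$ iff an opremum exists, completing the proof.

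I expect the only real obstacle to be expository rather than mathematical: pinning down the precise form of $F_a$ and invoking the congruence--open-filter correspondence at the appropriate level of generality; the remainder is a direct unwinding of definitions. This argument also accounts for the parenthetical remark that oprema need not be unique, since it shows that the oprema of a s.i.\ algebra are precisely the elements of $M \setminus \{1\}$, a set that is typically not a singleton.
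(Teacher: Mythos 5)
Your proof is correct, and since the paper states this proposition as a known result of Rautenberg without giving a proof, there is nothing to diverge from: your argument via the lattice isomorphism between congruences and open ($\Box$-closed) filters, identifying the open filter generated by $a$ as $\{b : \Boxx{\leq n} a \leq b \text{ for some } n \in \omega\}$, is precisely the standard proof underlying that citation. It is also the same machinery the paper itself deploys later (cf.\ the $\Box$-filter argument in \Cref{3: Lem si auxiliary}), so your reconstruction fits the paper's toolkit exactly.
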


\subsubsection*{Stable homomorphisms and the closed domain condition}

Stable homomorphisms and the closed domain condition for modal algebras and modal spaces were introduced in \cite{stablecanonicalrules}, generalizing that for Heyting algebras and Priestley spaces introduced in \cite{LocallyFiniteReducts2017}. Intuitively, a stable homomorphism $h$ does not preserve $\Dia$ (so it is not a modal algebra homomorphism), but it does satisfy the inequality $\Dia h(a) \leq h(\Dia a)$; the closed domain condition indicates for which elements $a$ the equality $\Dia h(a) = h(\Dia a)$ holds. Stable homomorphisms were studied in \cite{TopocanonicalCompletionsClosure2008} as semihomomorphisms and in \cite{Ghilardi01012010} as continuous morphisms. 

\begin{definition}
    Let $\A$ and $\B$ be modal algebras. 
    \begin{enumerate}
        \item A Boolean homomorphism $h: A \to B$ is a \emph{stable homomorphism} if $\Dia h(a) \leq h(\Dia a)$ for all $a \in A$.
        \item Let $h: A \to B$ be a stable homomorphism. For $a \in A$, we say that $h$ satisfies the \emph{closed domain condition (CDC) for $a$} if $h(\Dia a) = \Dia h(a)$. For $D \subseteq A$, we say that $h$ satisfies the \emph{closed domain condition (CDC) for $D$} if $h$ satisfies CDC for all $a \in D$.
    \end{enumerate}
\end{definition}

These definitions dualize to modal spaces as follows.

\begin{definition}
    Let $\X = (X, R)$ and $\Y = (Y, Q)$ be modal spaces. 
    \begin{enumerate}
        \item A continuous map $f: X \to Y$ is a \emph{stable map} if $xRy$ implies $f(x)Qf(y)$ for all $x, y \in X$.
        \item Let $f: X \to Y$ be a stable map. For a clopen subset $D \subseteq Y$, we say that $f$ satisfies the \emph{closed domain condition (CDC) for $D$} if
\[
Q[f(x)] \cap D \neq \emptyset \Rightarrow f(R[x]) \cap D \neq \emptyset
\]
        for all $x \in \X$.
    For a set $\D$ of clopen subsets of $Y$, we say that $f: X \to Y$ satisfies \emph{the closed domain condition (CDC) for $\D$} if $f$ satisfies CDC for all $D \in \D$.
    \end{enumerate}
    
\end{definition}

It follows directly from the definition that a stable homomorphism $h: A \to B$ satisfying CDC for $A$ is a modal algebra homomorphism, and a stable map $f: X \to Y$ satisfying CDC for $\pow{Y}$ is a p-morphism.

\begin{notation}
    We write $h: \A \inj_D \B$ if $h$ is a stable embedding satisfying CDC for $D$, and $\A \inj_D \B$ if there is such an $h$. We write $f: \X \surj_\D \Y$ if $f$ is a surjective stable map satisfying CDC for $\D$, and $\X \surj_\D \Y$ if there is such an $f$.
\end{notation}

\subsubsection*{Stable canonical rules}
We refer to \cite{stablecanonicalrules, Jeřábek_2009, kracht8ModalConsequence2007} for inference rules and consequence relations in modal logic. A \emph{modal multi-conclusion rule} $\rho$ is an expression of the form
    \begin{prooftree}
        \AxiomC{$\Gamma$}
        \UnaryInfC{$\Delta$}
    \end{prooftree}
(or $\Gamma/\Delta$) where $\Gamma$ and $\Delta$ are finite sets of formulas. If $\Delta$ is a singleton, $\rho$ is called \emph{single-conclusion}. If $\Gamma = \emp$, $\rho$ is called \emph{assumption-free}. We will only address modal multi-conclusion rules, so we simply call them \emph{rules}. A single-conclusion assumption-free rule $/\phi$ can be identified with the formula $\phi$. 

A \emph{normal modal multi-conclusion consequence relation} or \emph{normal modal multi-conclusion rule system} is a set $\S$ of rules satisfying:
    \begin{enumerate}
        \item $\phi/\phi \in \S$,
        \item $\phi, \phi \to \psi / \psi \in \S$,
        \item $\phi / \Box \phi \in \S$,
        \item $/\phi \in \S$ for all $\phi \in \K$,
        \item $\Gamma/\Delta \in \S$ implies $\Gamma, \Gamma' / \Delta, \Delta' \in \S$ \quad (\emph{weakening}),
        \item $\Gamma/\Delta, \phi \in \S$ and $\Gamma, \phi/\Delta \in \S$ implies $\Gamma/ \Delta \in \S$ \quad (\emph{cut}),
        \item $\Gamma/\Delta \in \S$ implies $\sigma(\Gamma)/\sigma(\Delta) \in \S$ for any substitution $\sigma$ \quad (\emph{substitution}).
    \end{enumerate}
We simply call normal modal multi-conclusion consequence relations \emph{rule systems}. For a set $\R$ of rules, let $\S_0 + \R$ be the least rule system containing $\S_0 \cup \R$. For a logic $L$, let $\S_L$ be the least rule system containing $L$. When $\S = \S_0 + \R$, we say that $\S$ is \emph{axiomatized} by $\R$ over $\S_0$. When $\S_L$ is axiomatized by $\R$ over $\S_{L_0}$, we say that $L$ is axiomatized by $\R$ over $L_0$. When $\S + \rho = \S + \rho'$, we say that $\rho$ and $\rho'$ are \emph{equivalent} over $\S$. 

A modal algebra $\A$ \emph{validates} a rule $\Gamma/\Delta$, written $\A \models \Gamma/\Delta$, if for any valuation $V$ on $\A$, $V(\gamma) = 1$ for all $\gamma \in \Gamma$ implies $V(\delta) = 1$ for some $\delta \in \Delta$. Recall that logics correspond one-to-one to varieties and we write $\V(L) \coloneq \{\A: \A \models L\}$ for a logic $L$ and $\Log(\V) \coloneq \{\phi: \V \models \phi\}$ for a variety $\V$ (see, e.g., \cite[Section 7.6]{czModalLogic1997}). In a similar manner, each rule corresponds to a universal sentence and each rule system to a universal class. We write $\U(\S) \coloneq \{\A: \A \models \S\}$ for a rule system $\S$ and $\S(\U) \coloneq \{\rho: \U \models \rho\}$ for a universal class $\U$. For a logic $L$, let $\Sigma(L)$ be the least rule system containing $\{/\phi: \phi \in L\}$, that is, $\Sigma(L) = \S_\K + \{/\phi: \phi \in L\}$. Note that $\Sigma(L) = \S_L$. For a rule system $\S$, let $\Lambda(\S) = \{\phi: /\phi \in \S\}$. Then, $\Sigma$ and $\Lambda$ preserve the subset relation, and we have $\Lambda(\Sigma(L)) = L$ for a logic $L$ and $\Sigma(\Lambda(\S)) \subseteq \S$ for a rule system $\S$. So, if a logic $L$ is axiomatized by $\R$ over $L_0$, then $L = \Sigma(\S)$ for a rule system $\S$ axiomatized by $\R$ over $\S_{L_0}$.

\emph{Stable canonical rules} are introduced in \cite{stablecanonicalrules} as an alternative to canonical rules, the theory of which is developed in \cite{Jeřábek_2009}. Both of them are generalizations of characteristic formulas to multi-conclusion rules. They are defined from finite modal algebras or finite modal spaces, and their validity can be fully characterized by the algebras or spaces. While canonical rules, generalizing Zakharyaschev's canonical formulas \cite{zakharyaschevCanonicalFormulas1} (see also \cite[Chapter 9]{czModalLogic1997}), use selective filtration (see, e.g., \cite[Section 5.5]{czModalLogic1997}), stable canonical rules use the standard filtration (see, e.g., \cite[Section 2.3]{blackburnModalLogic2001} and \cite[Section 5.3]{czModalLogic1997}). A special feature of stable canonical rules is that, contrary to canonical rules, any rule system can be axiomatized by stable canonical rules over the least normal modal rule system $\S_\K$.

The basic idea of stable canonical rules, as well as other characteristic formulas, is very similar to \emph{diagrams} widely used in model theory: to encode the structure of finite algebras or finite spaces (frames), but only partially. 

\begin{definition} \label{3: Def scr}
    Let $\A$ be a finite modal algebra and $D \subseteq A$. The \emph{stable canonical rule} $\rho(\A, D)$ associated to $\A$ and $D$ is the rule $\Gamma/\Delta$, where:
    \begin{align*}
        \Gamma  = & \{p_a \lor p_b \leftrightarrow p_{a \lor b} : a,b \in A\} \cup \\
        & \{\lnot p_a \leftrightarrow p_{\lnot a} : a \in A\} \cup \\
        & \{\Dia p_a \rightarrow p_{\Dia a} : a \in A\} \cup \\
        & \{p_{\Dia a} \rightarrow \Dia p_a : a \in D\},
    \end{align*}
    and
    \[\Delta = \{p_a : a \in A, a \ne 1\}.\]
\end{definition}

Stable canonical rules can also be defined directly via finite modal spaces (i.e., finite Kripke frames). 

\begin{definition} \label{3: Def scr sp}
    Let $\X = (X, R)$ be a finite modal space and $\D \subseteq \mathcal{P}(X)$. Define the \emph{stable canonical rule} $\rho(\X, \D)$ associated to $\X$ and $\D$ as the rule $\Gamma/\Delta$, where:
    \begin{align*}
        \Gamma = & \{\bigvee \{p_x : x \in X\}\} \cup \\
        & \{p_x \to \lnot p_y : x,y \in X, x \neq y\} \cup \\
        & \{p_x \to \lnot\Dia p_y : x,y \in X, x \cancel{R} y\} \cup \\
        & \{p_x \to \bigvee \{\Dia p_y : y \in D\} : x \in X, D \in \D, x \in R^{-1}[D]\},
    \end{align*}
and
\[\Delta = \{\lnot p_x : x \in X\}.\]
\end{definition}

The following semantic characterization and axiomatization results are from \cite{stablecanonicalrules}.

\begin{theorem} \label{3: Thm sc rule char}
    Let $\A$ be a finite modal algebra, $D \subseteq A$, and $\B$ be a modal algebra. Then 
    \[\B \not\models \rho(\A, D) \text{ iff } \A \inj_D \B.\]
\end{theorem}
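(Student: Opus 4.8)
The plan is to read the syntactic form of $\rho(\A, D) = \Gamma/\Delta$ directly as a list of algebraic constraints, and to observe that a refuting valuation on $\B$ is the same datum as a stable embedding $\A \inj_D \B$. Concretely, since the propositional variables occurring in $\Gamma \cup \Delta$ are exactly the $p_a$ indexed by $a \in A$, any valuation $V$ on $\B$ is completely determined by the family $(V(p_a))_{a \in A}$, and I set $h \colon A \to B$, $h(a) \coloneq V(p_a)$. The entire proof will then amount to checking that ``$V$ validates every formula of $\Gamma$ and refutes every formula of $\Delta$'' translates, clause by clause, into ``$h$ is a stable Boolean embedding satisfying CDC for $D$''.

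For the direction from left to right, I would fix a refuting valuation $V$ and the induced map $h$, and verify the dictionary. Validity under $V$ of the two Boolean families in $\Gamma$ (encoding $p_{a \lor b} \leftrightarrow p_a \lor p_b$ and the negation clause) says exactly that $h$ preserves $\lor$ and $\lnot$, hence that $h$ is a Boolean homomorphism (preservation of $0,1,\land$ following automatically). Validity of $\Dia p_a \to p_{\Dia a}$ gives $\Dia h(a) \leq h(\Dia a)$ for all $a$, i.e.\ $h$ is stable; validity of $p_{\Dia a} \to \Dia p_a$ for $a \in D$ gives the reverse inequality $h(\Dia a) \leq \Dia h(a)$, so together with stability $h$ satisfies CDC for $D$. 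Finally, refutation of $\Delta = \{p_a : a \neq 1\}$ means $h(a) = V(p_a) \neq 1$ for every $a \neq 1$; since $h$ is a Boolean homomorphism, the equivalence ``$h(a) = 1 \iff a = 1$'' is exactly injectivity of $h$, so $h$ is an embedding and $\A \inj_D \B$.

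The converse direction is the same dictionary read backwards: given a stable embedding $h \colon \A \inj_D \B$ satisfying CDC for $D$, define $V(p_a) \coloneq h(a)$. Because $h$ is a Boolean homomorphism, stable, and satisfies CDC for $D$, each family of $\Gamma$-formulas evaluates to $1$ under $V$; and because $h$ is injective, $h(a) \neq 1$ for every $a \neq 1$, so $V$ refutes every formula of $\Delta$. Hence $V$ witnesses $\B \not\models \rho(\A, D)$.

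I expect no deep obstacle here: the argument is a faithful translation between the syntax of $\Gamma/\Delta$ and the defining conditions of $\inj_D$. The two points that need a little care are, first, that validity of the biconditional Boolean clauses forces the \emph{arbitrary} assignment $a \mapsto V(p_a)$ to be a genuine Boolean homomorphism rather than merely some function; and second, the standard fact that for a Boolean homomorphism the refutation condition ``$h(a) \neq 1$ for all $a \neq 1$'' is equivalent to injectivity (equivalently, triviality of the kernel). Everything else is routine verification that the inequality $\Dia h(a) \leq h(\Dia a)$ and its converse on $D$ correspond precisely to the two modal families of $\Gamma$.
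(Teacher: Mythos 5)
Your proposal is correct and takes essentially the same approach as the standard proof: the paper itself states this theorem without proof, citing it from \cite{stablecanonicalrules}, and the argument there is precisely your clause-by-clause dictionary between a refuting valuation $V(p_a) = h(a)$ and a stable Boolean embedding satisfying CDC for $D$. One point you handled well: the paper's \Cref{3: Def scr} contains a typo --- the clause $\{\lnot p_a \leftrightarrow \lnot p_a : a \in A\}$ should read $\{p_{\lnot a} \leftrightarrow \lnot p_a : a \in A\}$ --- and your reading of it as the negation-preservation clause is the intended one, which matters because otherwise $h$ would not be forced to be a Boolean homomorphism and the step from ``$h(a) \neq 1$ for all $a \neq 1$'' to injectivity would fail.
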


For convenience, we provide the dual presentation of this fact. For a finite modal space $\F$, $\D \subseteq \pow{F}$, and a modal space $\X$, we have 
\[\X \not\models \rho(\F, \D) \text{ iff } \X \surj_\D \F.\]

\begin{theorem} \label{3: Thm scr ax over K} \leavevmode
    \begin{enumerate}
        \item Every rule system $\S$ is axiomatizable over $\S_\K$ by stable canonical rules. Moreover, if $\S$ is finitely axiomatizable over $\S_\K$, then $\S$ is axiomatizable over $\S_\K$ by finitely many stable canonical rules.
        \item Every logic $L$ is axiomatizable over $\K$ by stable canonical rules. Moreover, if $L$ is finitely axiomatizable over $\K$, then $L$ is axiomatizable over $\K$ by finitely many stable canonical rules.
    \end{enumerate}    
\end{theorem}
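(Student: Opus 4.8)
The plan is to reduce the logic case to the rule-system case and to prove the latter by exhibiting, for each rule system $\S$, the explicit axiomatization $\S = \S_\K + \R$, where
\[
\R = \{\rho(\A, D) : \A \text{ finite}, \ D \subseteq A, \ \rho(\A, D) \in \S\}.
\]
The inclusion $\S_\K + \R \subseteq \S$ is immediate. For the converse I would pass to the dual universal classes and use the one-to-one correspondence between rule systems and universal classes (so $\S = \S(\U(\S))$): it suffices to show $\U(\S_\K + \R) \subseteq \U(\S)$, that is, contrapositively, that whenever a modal algebra $\B$ refutes some $\rho = \Gamma/\Delta \in \S$, it already refutes some member of $\R$. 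The logic case then follows at once: applying the rule-system result to $\S_L = \Sigma(L)$ yields $\S_L = \S_\K + \R$, which is by definition what it means for $L$ to be axiomatized by $\R$ over $\K$; and if $L = \K + \{\phi_1, \dots, \phi_n\}$ then $\S_L = \S_\K + \{/\phi_1, \dots, /\phi_n\}$ is finitely axiomatizable over $\S_\K$, so the finite statement transfers as well.

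The heart is an algebraic ``filtration'' of the refutation. Fix $\B \not\models \rho$ witnessed by a valuation $V$, and let $\Theta$ be the finite subformula-closure of $\Gamma \cup \Delta$. Let $A$ be the Boolean subalgebra of $\B$ generated by $\{V(\psi) : \psi \in \Theta\}$; since $\Theta$ is finite, $A$ is finite. I would make $A$ into a modal algebra $\A = (A, \Dia_{\A})$ by letting $\Dia_{\A} a$ be the least element of $A$ above $\Dia_{\B} a$, which makes the inclusion $h : \A \to \B$ a stable embedding, as $\Dia_{\B} h(a) = \Dia_{\B} a \leq \Dia_{\A} a = h(\Dia_{\A} a)$. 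Put $D = \{V(\psi) : \psi \in \Theta \text{ and } \Dia\psi \in \Theta\}$. For such $\psi$ we have $\Dia_{\B} V(\psi) = V(\Dia\psi) \in A$, whence $\Dia_{\A} V(\psi) = \Dia_{\B} V(\psi)$, i.e.\ $h$ satisfies CDC for $D$. Thus $\A \inj_D \B$, and \Cref{3: Thm sc rule char} gives (a) $\B \not\models \rho(\A, D)$.

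It remains to show (b) $\rho(\A, D) \in \S$, for which I would prove the key transfer lemma: every modal algebra $\mathfrak{C}$ with $\A \inj_D \mathfrak{C}$ refutes $\rho$, so that $\rho \models \rho(\A, D)$ and hence $\rho(\A, D) \in \S(\U(\S)) = \S$. Given $g : \A \inj_D \mathfrak{C}$ satisfying CDC for $D$, define a valuation $W$ on $\mathfrak{C}$ by $W(p) = g(V(p))$ for the variables $p$ occurring in $\rho$ (note $V(p) \in A$ since $p \in \Theta$), and show by induction on $\psi \in \Theta$ that $W(\psi) = g(V(\psi))$. The Boolean cases use that $g$ is a Boolean homomorphism and that each $V(\psi) \in A$; the modal case $\Dia\psi \in \Theta$ is exactly where the construction pays off, since $W(\Dia\psi) = \Dia_{\mathfrak{C}} g(V(\psi)) = g(\Dia_{\A} V(\psi)) = g(V(\Dia\psi))$, the middle equality being CDC for $D$ applied to $V(\psi) \in D$. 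As $V$ refutes $\rho$ and $g$ is an injective homomorphism (so $g(1) = 1$ and $g(x) \neq 1$ for $x \neq 1$), $W$ refutes $\rho$ on $\mathfrak{C}$.

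For finite axiomatizability it suffices to treat a single rule $\rho$, and the algebra $A$ above is generated as a Boolean algebra by at most $|\Theta|$ elements, hence has size at most $2^{2^{|\Theta|}}$; so only finitely many pairs $(\A, D)$ arise up to isomorphism, isomorphic pairs yielding equivalent stable canonical rules. Collecting one representative of each gives a finite $\R_0 \subseteq \R$ with $\S = \S_\K + \R_0$. I expect the main obstacle to be the modal step of the transfer lemma: it forces the precise definition of $D$ (the values of exactly those subformulas sitting immediately below a modal subformula) and is the sole place the closed domain condition is invoked, so making $\A$, $D$, and the inductive hypothesis mesh is the crux on which both (a) and (b), and thus the whole argument, rest.
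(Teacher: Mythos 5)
Your proposal is correct and is essentially the paper's own approach (the paper states this result citing its source and proves the generalization \Cref{3: Thm scr complete}--\Cref{3: Cor scr complete} by the same method): the finite refutation pattern $(\A, D)$ is obtained as the least algebraic filtration of a refuting algebra, the refuting valuation is pulled back along any stable embedding satisfying CDC, and the conclusion goes through \Cref{3: Thm sc rule char}. The only difference is bookkeeping: the paper enumerates all bounded-size refutation tuples up front, so that each rule is semantically equivalent over the base class to a finite set of stable canonical rules, whereas you axiomatize $\S$ by the set of all stable canonical rules contained in $\S$ (via your transfer lemma) and invoke the finiteness of the enumeration only for the finite-axiomatizability claim --- the underlying lemmas are identical.
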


\section{Definable filtrations and stable canonical rules} \label{Sec 3}

In this section, we identify definable filtration, a generalized type of filtration, as a key concept for developing the theory of stable canonical rules. We generalize the axiomatization result over $\S_\K$ and $\K$ (\cite{stablecanonicalrules}) to any rule system or logic that admits definable filtration. 

We assume familiarity with the filtration method widely used in modal logic. See, e.g., \cite[Definition 2.36]{blackburnModalLogic2001} or \cite[Section 5.3]{czModalLogic1997} for the standard filtration for Kripke frames and models, and \cite[Section 4]{stablecanonicalrules} for an algebraic account. Recall that in standard filtration, given a subformula-closed set $\Theta$ of formulas, the equivalence relation $\sim_\Theta$ identifying points that agree on all formulas in $\Theta$ is used to quotient the original model. Applying standard filtration with a finite set $\Theta$ to a countermodel yields a finite countermodel, but the validity of the base logic may not be preserved under this construction. To avoid this, one can use a finer relation as long as the number of equivalence classes remains finite for quotienting. A first example of this kind of generalization appeared in \cite{ageneralfiltrationmethod1972}, which used the equivalence relation $\sim_{\Theta'}$ for some $\Theta' \supseteq \Theta$. This is a typical example of definable filtrations introduced below. More general versions of filtrations that use finer equivalence relations not necessarily induced by sets of formulas were used in \cite{Shehtman1987,Shehtman1993,GabbayShehtman1998}. See also \cite{vanbenthemModernFacesFiltration2023} for an overview and discussion of various notions of filtration.

For our purposes, even though we allow finer equivalence relations, we have to restrict ourselves to those induced by sets of formulas. This restriction ensures that the quotient map from a modal space to its filtration is continuous and enables an algebraic presentation, which will be crucial for developing the theory of stable canonical rules and formulas in the subsequent sections. This leads to the concept of \emph{definable filtration}, which was used earlier in \cite{ageneralfiltrationmethod1972} and explicitly defined in \cite{Kikot2020} for Kripke frames. Let us recall the definition of definable filtrations presented for modal spaces.

\begin{definition} \label{3: Def sp filtration}
    Let $\X = (X, R)$ be a modal space, $V$ be a valuation on $\X$, $\Theta$ be a finite subformula-closed set of formulas, and $\Theta'$ be a finite subformula-closed set of formulas containing $\Theta$. A \emph{definable filtration of $(\X, V)$ for $\Theta$ through $\Theta'$} is a modal space $\X' = (X', R')$ with a valuation $V'$ such that:
    \begin{enumerate}
        \item $X' = X/{\sim_{\Theta'}}$, where 
        \[x \sim_{\Theta'} y \text{ iff } (\X, V, x \models \phi \iff \X, V, y \models \phi \text{ for all $\phi \in \Theta'$}),\]
        \item $V'(p) = \{[x]_{\Theta'}: x \in V(p)\}$ for $p \in \Theta'$ and $V'(p) = \emp$ for $p \notin \Theta'$,
        \item $xRy$ implies $[x]_{\Theta'} R' [y]_{\Theta'}$,
        \item if $[x]_{\Theta'} R' [y]_{\Theta'}$ then ($y \models \phi$ implies $x \models \Dia \phi$ for $\Dia \phi \in \Theta$).
    \end{enumerate}
    We also call $\X'$ a \emph{definable filtration of $\X$ for $\Theta$ through $\Theta'$}.
\end{definition}

We drop the subscript $\Theta'$ when it is clear from the context. A standard filtration through $\Theta$ is always a definable filtration by taking $\Theta' = \Theta$. However, a definable filtration for $\Theta$ through $\Theta'$ is different from a filtration through $\Theta'$, as in the former, the extended set $\Theta'$ of formulas is used to construct the quotient $X'$, while the fourth requirement on the relation $R'$ is determined by $\Theta$. This distinction is important, as we will see in \Cref{Rem mismatch}.

Each equivalence class of $\sim_{\Theta'}$ can be represented as 
\[\bigcap \{V(\phi)^*: \phi \in \Theta'\},\]
where each $V(\phi)^*$ is either $V(\phi)$ or $V(\phi)^c$, and thus is clopen in $\X$. It follows that the quotient map $X \surj X/\sim_{\Theta'}$ is continuous. This allows an algebraic definition of definable filtrations.

\begin{definition} \label{3: Def alg filtration}
    Let $\A = (A, \Dia)$ be a modal algebra, $V$ be a valuation on $\X$, $\Theta$ be a finite subformula-closed set of formulas, and $\Theta'$ be a finite subformula-closed set of formulas containing $\Theta$. A \emph{definable filtration of $(\A, V)$ for $\Theta$ through $\Theta'$} is a modal algebra $\A' = (A', \Dia')$ with a valuation $V'$ such that:
    \begin{enumerate}
        \item $A'$ is the Boolean subalgebra of $A$ generated by $V[\Theta'] \subseteq A$, 
        \item $V'(p) = V(p)$ for $p \in \Theta'$ and $V'(p) = 0$ for $p \notin \Theta'$,
        \item The inclusion $\A' \inj \A$ is a stable homomorphism satisfying CDC for $D$, where 
        \[D = \{V(\phi) : \Dia \phi \in \Theta\}.\]
    \end{enumerate}
    We also call $\A'$ a \emph{definable filtration of $\A$ for $\Theta$ through $\Theta'$}.
\end{definition}

Note that the extended set $\Theta'$ of formulas is used to generate the Boolean subalgebra $A'$, while the closed domain $D$ is determined by $\Theta$. It is straightforward to verify that the two definitions of definable filtrations are dual to each other in the following sense, generalizing the proof for standard filtrations in \cite[Theorem 4.2]{stablecanonicalrules}. We will mostly use algebraic filtrations throughout the paper, while one can always reformulate it using the language of modal spaces.

\begin{proposition}
    Let $\Theta, \Theta'$ be finite subformula-closed sets of formulas such that $\Theta \subseteq \Theta'$. For modal algebras $\A, \A'$ with the dual spacec $\X, \X'$ and valuations $V_\A, V_{\A'}$ with the dual valuations $V_\X, V_{\X'}$, $(\A', V_{\A'})$ is a definable filtration of $(\A, V_{\A'})$ for $\Theta$ through $\Theta'$ iff $(\X', V_{\X'})$ is a definable filtration of $(\X, V_{\A'})$ for $\Theta$ through $\Theta'$.
\end{proposition}

Below are some well-known examples of standard filtration. A definable filtration that is not a standard filtration will be presented in \Cref{Sec 4}. 

\begin{example}[\cite{stablecanonicalrules}] \label{3: Ex filtration} \leavevmode
    \begin{enumerate}
        \item Recall that the \emph{least filtration} and the \emph{greatest filtration} are standard filtrations defined frame-theoretically by 
        \[[x]R^l[y] \text{ iff } (x \sim x' \text{ and } y \sim y' \text{ and } x'Ry', \text{ for some $x', y' \in X$})\]
        and 
        \[[x]R^g[y] \text{ iff } (y \models \phi \text{ implies } x \models \Dia \phi, \text{ for all $\Dia \phi \in \Theta$})\]
        respectively. The algebraic constructions of them are
        \[\Dia^l a = \Land \{ b \in A' : \Dia a \leq b \}\]
        and
        \[\Dia^g a = \Land \{ \Dia b : a \leq b \text{ and } b \in D^\lor \}\]
        respectively, where $D^\lor$ is the $(\lor, 0)$-subsemilattice of $\A'$ generated by $D$.
        
        \item Recall that the \emph{Lemmon filtration} (see, e.g., \cite[Section 2.3]{blackburnModalLogic2001} or \cite[Section 5.3]{czModalLogic1997}) is a standard filtration defined frame-theoretically by 
        \[[x] R^L [y] \text{ iff } (y \models \Dia^{\leq 1} \phi \text{ implies } x \models \Dia \phi, \text{ for all $\Dia \phi \in \Theta$}),\]
        where $\Dia^{\leq 1} a = a \lor \Dia a$. The algebraic construction of the Lemmon filtration is 
        \[\Dia^L a = \Land \{\Dia b : \Dia a \leq \Dia b \text{ and } \Dia^{\leq 1} a \leq \Dia^{\leq 1} b \text{ and } b \in D^\lor \}.\]
        
    \end{enumerate}
    
\end{example}

The importance of the filtration method is summarized in the following lemma. Note that the statement may not hold for a formula in $\Theta'{\setminus}\Theta$. See \Cref{Rem mismatch} for more discussion on the importance of the ``mismatch'' of $\Theta'$ and $\Theta$ in the definition of definable filtrations.

\begin{lemma}[Filtration Lemma] \label{3: Lem filtration}
    Let $(\A', V')$ be a definable filtration of $(\A, V)$ for $\Theta$ through $\Theta'$. Then $V(\phi) = V'(\phi)$ for all $\phi \in \Theta$. 
\end{lemma}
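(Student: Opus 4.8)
The plan is to prove the Filtration Lemma by induction on the structure of the formula $\phi \in \Theta$. Since $\Theta$ is subformula-closed, every subformula of a formula in $\Theta$ is again in $\Theta$, so the induction stays within $\Theta$ and the inductive hypothesis is always available for immediate subformulas. The base case and the Boolean cases are routine: for a propositional variable $p \in \Theta \subseteq \Theta'$ we have $V'(p) = V(p)$ by clause (2) of \Cref{3: Def alg filtration}, and since $A'$ is the Boolean subalgebra of $A$ generated by $V[\Theta']$, the inclusion $\A' \inj \A$ is a Boolean homomorphism, so it commutes with $\land$, $\lor$, and $\lnot$; thus the inductive step for Boolean connectives follows directly from the inductive hypothesis.

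The only substantive case is the modal one, $\phi = \Dia \psi$ with $\Dia \psi \in \Theta$. Here $\psi \in \Theta$ as well, so by the inductive hypothesis $V'(\psi) = V(\psi)$. We want to show $V'(\Dia \psi) = V(\Dia \psi)$, i.e. $\Dia' V'(\psi) = \Dia V(\psi)$, which by the inductive hypothesis reduces to showing $\Dia' V(\psi) = \Dia V(\psi)$. One inequality, $\Dia' V(\psi) \leq \Dia V(\psi)$, is exactly the stability of the inclusion $\A' \inj \A$ applied to the element $V(\psi) \in A'$. The reverse inequality is precisely where the closed domain condition enters: since $\Dia \psi \in \Theta$, we have $V(\psi) \in D$ by the definition of $D$ in clause (3), and so the inclusion satisfies CDC for $V(\psi)$, which by definition means $\Dia' V(\psi) = \Dia V(\psi)$ (the equality, not merely the inequality). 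Combining stability and CDC yields the desired equality.

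I expect the modal case to be the main point, and the key subtlety is bookkeeping the roles of $\Theta$ and $\Theta'$. The reason the lemma is stated only for $\phi \in \Theta$, and may fail for $\phi \in \Theta' \setminus \Theta$, is that the closed domain $D$ is determined by $\Theta$ rather than $\Theta'$: a formula $\Dia \psi \in \Theta' \setminus \Theta$ need not contribute $V(\psi)$ to $D$, so CDC is not guaranteed for it and only the inequality $\Dia' V(\psi) \leq \Dia V(\psi)$ survives. The one thing to verify carefully is that every subformula invoked in the induction genuinely lies in $\Theta$: this is guaranteed because $\Theta$ is subformula-closed, so when $\Dia \psi \in \Theta$ we indeed have $\psi \in \Theta$, keeping the inductive hypothesis applicable and ensuring we never need to appeal to CDC for an element outside $D$.
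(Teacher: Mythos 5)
Your proof is correct and takes essentially the same route as the paper: a routine induction using $\Theta \subseteq \Theta'$ for the base case, the Boolean subalgebra structure for the connectives, and stability plus CDC of the inclusion $\A' \inj \A$ for the $\Dia$ case. One small slip in attribution: stability of the inclusion gives $\Dia V(\psi) \leq \Dia' V(\psi)$ (the filtration's diamond is the \emph{larger} one), while the nontrivial inequality $\Dia' V(\psi) \leq \Dia V(\psi)$ is exactly what CDC supplies — you have the two roles swapped, but since CDC as you invoke it is by definition the full equality, the argument closes regardless.
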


\begin{proof}
    By a routine induction on the complexity of $\phi$. Use the fact that $\Theta \subseteq \Theta'$ for the base case, that $A'$ is a Boolean subalgebra of $A$ for the Boolean cases, and that the inclusion is stable and satisfies CDC for $D$ for the $\Dia$ case.
\end{proof}

Using the Filtration Lemma, we can automatically deduce the finite model property (fmp) for logics and rule systems that \emph{admit definable filtration}.

\begin{definition} \label{3: Def admit filtration} \leavevmode
    \begin{enumerate}
        \item A class $\C$ of modal algebras \emph{admits definable filtration} if for any finite subformula-closed set $\Theta$ of formulas, there is a finite subformula-closed set $\Theta'$ containing $\Theta$ such that, for any modal algebra $\A \in \C$ and any valuation $V$ on $\A$, there is a definable filtration $(\A', V')$ of $(\A, V)$ for $\Theta$ through $\Theta'$ such that $\A' \in \C$.
        \item  A modal logic $L$ \emph{admits definable filtration} if the variety $\V(L)$ admits definable filtration. 
        \item A rule system $\S$ \emph{admits definable filtration} if the universal class $\U(\S)$ admits definable filtration. 
    \end{enumerate}
   
\end{definition}

Note that a logic $L$ admits definable filtration iff the rule system $\S_L$ admits definable filtration because they correspond to the same class of modal algebras.

\begin{remark}
    The notion of admitting filtration has been used in the literature with various meanings and strengths. An explicit definition of \emph{admitting filtration in the weak sense} and \emph{admitting filtration in the strong sense} is provided in \cite{stablemodallogic}, where the relation of the two definitions is also discussed. A similar discussion with Kripke frames and models can be found in \cite{Kikot2020}. We define it as \Cref{3: Def admit filtration} because this is just enough to prove \Cref{3: Thm scr complete} and \Cref{3: Cor scr complete}, though it is stronger than what is needed to prove \Cref{3: Prop filtration implies fmp}.
\end{remark}

\begin{proposition} \label{3: Prop filtration implies fmp}
    If a logic or a rule system admits definable filtration, then it has the fmp.
\end{proposition}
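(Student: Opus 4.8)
The plan is to prove both statements via the Filtration Lemma (\Cref{3: Lem filtration}), using the standard fact that a logic or rule system has the fmp iff every non-theorem is refuted on some finite algebra in the corresponding class. I would treat the logic case first and then indicate how the argument lifts to rule systems, since the two are handled in an entirely parallel fashion.

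For the logic case, suppose $L$ admits definable filtration and let $\phi \notin L$. Since $\V(L)$ is the variety of all models of $L$, there is some $\A \in \V(L)$ and a valuation $V$ on $\A$ with $V(\phi) \neq 1$. Now I would take $\Theta$ to be the (finite) set of subformulas of $\phi$, which is subformula-closed by construction. Because $L$ admits definable filtration, there is a finite subformula-closed $\Theta' \supseteq \Theta$ and a definable filtration $(\A', V')$ of $(\A, V)$ for $\Theta$ through $\Theta'$ with $\A' \in \V(L)$. By \Cref{3: Lem filtration} we have $V'(\phi) = V(\phi) \neq 1$, so $\A' \not\models \phi$. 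Crucially, $\A'$ is finite: its underlying Boolean algebra is generated by the finite set $V[\Theta']$, and a finitely generated Boolean algebra is finite. Thus $\phi$ is refuted on a finite member of $\V(L)$, witnessing the fmp.

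The rule-system case is structurally identical but requires care with the semantics of rules. Suppose $\S$ admits definable filtration and $\Gamma/\Delta \notin \S$, so by completeness there is $\A \in \U(\S)$ and a valuation $V$ refuting the rule, meaning $V(\gamma) = 1$ for all $\gamma \in \Gamma$ while $V(\delta) \neq 1$ for all $\delta \in \Delta$. Here I would let $\Theta$ be the subformula closure of the finite set $\Gamma \cup \Delta$, obtain $\Theta'$ and a definable filtration $(\A', V')$ with $\A' \in \U(\S)$, and apply the Filtration Lemma to conclude $V'(\gamma) = V(\gamma) = 1$ for each $\gamma \in \Gamma$ and $V'(\delta) = V(\delta) \neq 1$ for each $\delta \in \Delta$. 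Hence $\A'$ (again finite, by the finite-generation argument) refutes $\Gamma/\Delta$, giving the fmp for $\S$.

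The only genuinely delicate point is the appeal to completeness with respect to the algebraic semantics: I am using that $\Gamma/\Delta \notin \S$ guarantees an actual algebra in $\U(\S)$ refuting the rule, which is exactly the algebraic completeness encoded in the correspondence $\U(\S) = \{\A : \A \models \S\}$ and $\S = \S(\U(\S))$ (and analogously $\V(L) = \{\A : \A \models L\}$ for logics) recorded in \Cref{Sec 2}; no appeal to canonicity or Kripke completeness is needed. I expect this step to be the main thing a careful reader would want spelled out, but it is immediate from the preliminaries, so the proof remains short. The finiteness of $\A'$ and the Filtration Lemma do all the real work.
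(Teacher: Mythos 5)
Your proof is correct and takes essentially the same route as the paper's: refute the non-theorem on an algebra in $\U(\S)$ (resp.\ $\V(L)$), apply the assumption of admitting definable filtration to obtain a filtrated algebra in the same class, and use the Filtration Lemma to transfer the refutation, with finiteness coming from local finiteness of Boolean algebras. The only difference is presentational: the paper writes out the rule-system case and notes the logic case is analogous, whereas you spell out both cases together with the completeness and finiteness details the paper leaves implicit.
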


\begin{proof}
    We only show the statement for rule systems, and the case for logics follows similarly. Let $\S$ be a rule system that admits definable filtration. For any rule $\rho = \Gamma/\Delta \notin \S$, there is an $\S$-algebra $\A$ such that $\A \not\models \rho$, witnessed by some valuation $V$ on $\A$. By the assumption that $\S$ admits definable filtration, since $\A \in \U(\S)$, there is a finite subformula-closed set $\Theta'$ containing $\Sub(\Gamma \cup \Delta)$ and a definable filtration $(\A', V')$ of $(\A, V)$ for $\Sub(\Gamma \cup \Delta)$ through $\Theta'$ such that $\A' \in \U(\S)$. By \Cref{3: Lem filtration}, $\A, V \not \models \Gamma/\Delta$ implies $\A', V' \not \models \Gamma/\Delta$. Thus, $\A'$ is a finite $\S$-algebra that refutes $\rho$. Hence, $\S$ has the fmp. 
\end{proof}

Many logics are known to admit standard filtration and thus admit definable filtration. For example, $\K$, $\logic{T}$, and $\logic{D}$ admit the least and the greatest filtration. As for transitive logics, $\Kf$ and $\Sf$ admit the Lemmon filtration.

Now we prove the main theorems of this section, generalizing \Cref{3: Thm scr ax over K} by extending the base rule system (or logic) from $\S_\K$ (or $\K$) to any one that admits definable filtration. Let us recall the proof idea of \Cref{3: Thm scr ax over K} in terms of modal spaces. For any formula $\phi$, if a modal space $\X \not\models \phi$, then by applying standard filtration through $\Theta = \Sub(\phi)$, we obtain a finite modal space $\F \not\models \phi$ with a closed domain $\D \subseteq\pow{F}$. It follows from the definition of standard filtration that the quotient map from $\X$ to $\F$ is a stable map satisfying CDC for $\D$, so $\X \not\models \rho(\F, \D)$. We can show that there are only finitely many such pairs $(\F, \D)$ obtained this way, and $\phi$ is equivalent to the stable canonical rules of those pairs, which concludes the proof. If we try to generalize the proof with non-standard filtrations, then the quotient map may not be continuous, and the proof would fail. The definability of filtrations is meant to guarantee that the equivalence classes are clopen and that the induced quotient map is continuous. Moreover, this is a natural constraint because the original idea of \emph{general frames} (a variant of modal spaces) is to restrict valuations to the sets defined by formulas \cite[Section 8.1]{czModalLogic1997}. These sets are called \emph{admissible} and correspond to clopen sets in modal spaces. Thus, we can identify admitting definable filtration as a key feature of $\S_\K$ that was used in the original proof of \Cref{3: Thm scr ax over K}, allowing us to generalize the result. The proofs presented here are essentially the same as the original ones.

\begin{theorem} \label{3: Thm scr complete} 
    Let $\S$ be a rule system that admits definable filtration. For any rule $\rho$, there exist stable canonical rules $\rho(\A_1, D_1), \dots, \rho(\A_n, D_n)$ where each $\A_i$ is a finite $\S$-algebra and $D_i \subseteq A_i$, such that for any $\S$-algebra $\B$,
    \[\B \models \rho \text{ iff } \B \models\rho(\A_1, D_1), \dots, \rho(\A_n, D_n),\]
\end{theorem}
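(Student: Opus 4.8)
The plan is to mimic the original proof of \Cref{3: Thm scr ax over K}(1) from \cite{stablecanonicalrules}, substituting the given definable filtration for the standard filtration over $\S_\K$ at the one place where finiteness of the refutation pattern is needed. The target is a semantic equivalence over the class $\U(\S)$, so I would argue in terms of $\S$-algebras refuting $\rho$ and their definable filtrations, then package each filtration as a stable canonical rule via \Cref{3: Thm sc rule char}.

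First I would fix $\rho = \Gamma/\Delta$ and set $\Theta = \Sub(\Gamma \cup \Delta)$, the (finite, subformula-closed) set of subformulas appearing in $\rho$. Since $\S$ admits definable filtration, \Cref{3: Def admit filtration} hands me a single finite subformula-closed $\Theta' \supseteq \Theta$ that works uniformly for every $\A \in \U(\S)$. The crucial finiteness observation is that, up to isomorphism, there are only finitely many pairs $(\A', D)$ that can arise as a definable filtration of some $(\A,V)$ for $\Theta$ through $\Theta'$: the carrier $A'$ is the Boolean subalgebra generated by $V[\Theta']$, hence generated by at most $|\Theta'|$ elements, so $|A'| \leq 2^{2^{|\Theta'|}}$, and $D = \{V(\phi) : \Dia\phi \in \Theta\}$ is a subset of $A'$ indexed by the finitely many $\Dia$-subformulas in $\Theta$. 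Enumerate representatives $\rho(\A_1,D_1),\dots,\rho(\A_n,D_n)$ of these finitely many filtration patterns, discarding any $\A_i \notin \U(\S)$ (the definition guarantees the surviving ones still cover all filtrations).

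Next I would prove the two directions of the equivalence. For the forward direction, suppose some $\S$-algebra $\B$ refutes $\rho$, witnessed by a valuation $V$. By admitting definable filtration there is a definable filtration $(\A', V')$ of $(\B, V)$ for $\Theta$ through $\Theta'$ with $\A' \in \U(\S)$; by the Filtration Lemma (\Cref{3: Lem filtration}) $\A'$ refutes $\rho$ under $V'$, and $\A'$ is (isomorphic to) one of the $\A_i$ with the corresponding $D_i$. The inclusion $\A_i \inj_{D_i} \B$ is a stable embedding satisfying CDC for $D_i$, so by \Cref{3: Thm sc rule char} we get $\B \not\models \rho(\A_i,D_i)$; contrapositively, validity of all the $\rho(\A_j,D_j)$ forces $\B \models \rho$. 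For the converse, suppose $\B \not\models \rho(\A_i,D_i)$ for some $i$; then \Cref{3: Thm sc rule char} yields a stable embedding $\A_i \inj_{D_i} \B$, and since $\A_i$ refutes $\rho$ (as a genuine filtration pattern built to do so), I would transport this refutation along the embedding, using stability and CDC for $D_i$ to check that each premise in $\Gamma$ still evaluates to $1$ and each conclusion in $\Delta$ still fails — exactly the content that makes $\A_i \inj_{D_i} \B$ encode "$\B$ refutes $\rho$".

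The main obstacle I anticipate is the bookkeeping in that last transport step: verifying that a stable embedding satisfying CDC precisely for $D_i = \{V(\phi): \Dia\phi \in \Theta\}$ preserves truth of the formulas in $\Gamma \cup \Delta$. This is essentially a second invocation of the Filtration Lemma's inductive argument, but now read through the embedding rather than the quotient, and it hinges on the fact that CDC is imposed exactly on the $\Dia$-subformulas occurring in $\rho$ — the same ``$\Theta$ versus $\Theta'$'' mismatch flagged in \Cref{3: Rem mismatch theta}. I would need to be careful that the guarantee is only for formulas in $\Theta$ and not all of $\Theta'$, which is sufficient since $\Gamma,\Delta \subseteq \Theta$ by construction. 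The remaining steps (finiteness count, the two applications of \Cref{3: Thm sc rule char}) are routine given the earlier results.
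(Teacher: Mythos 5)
Your proposal is correct and follows essentially the same route as the paper's proof: a uniform $\Theta' \supseteq \Theta = \Sub(\Gamma\cup\Delta)$ from \Cref{3: Def admit filtration}, local finiteness of Boolean algebras to get a finite enumeration of refutation patterns, the Filtration Lemma for the passage from a refutation of $\rho$ in $\B$ to a refutation of some $\rho(\A_i,D_i)$, and \Cref{3: Thm sc rule char} plus transport of a refuting valuation along the stable embedding (using CDC precisely on $D_i$) for the other direction. The one point you should state explicitly is that the enumeration must be restricted to patterns arising from configurations $(\A,V)$ with $\A,V\not\models\rho$ --- the paper enumerates tuples $(\A,V,D)$ with $\A,V\not\models\rho$ and $V(p)=0$ for $p\notin\Theta'$ --- since your converse direction needs each $\A_i$ to carry a valuation $V_i$ refuting $\rho$ with $D_i=\{V_i(\psi):\Dia\psi\in\Theta\}$; your parenthetical ``built to do so'' shows you intend this, but as literally written your enumeration also admits filtrations of validating configurations, and for such an $\A_i$ the implication ``$\B\models\rho$ implies $\B\models\rho(\A_i,D_i)$'' would fail.
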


\begin{proof}
    Let $\rho = \Gamma/\Delta$ be a rule. If $\rho \in \S_L$, then we take $n=0$. Assume that $\rho \notin \S_L$ and let $\Theta = \Sub(\Gamma \cup \Delta)$. By the assumption that $\S$ admits definable filtration, there is a finite subformula-closed set $\Theta'$ containing $\Theta$ such that, for any $\S$-algebra $\B$ and any valuation $V$ on $\B$, there is a definable filtration $(\B', V')$ of $(\B, V)$ for $\Theta$ through $\Theta'$ such that $\B' \models \S$. Let $m = |\Theta'|$. Since Boolean algebras are locally finite, up to isomorphism, there are finitely many tuples $(\A, V, D)$ satisfying the following conditions:
    \begin{enumerate}
        \item $\A$ is a finite $\S$-algebra based on an at most $m$-generated Boolean algebra and $\A \not\models \rho$,
        \item $V$ is a valuation on $\A$ such that $\A, V \not\models \rho$ and $V(p) = 0$ for $p \notin \Theta'$,
        \item $D = \{V(\psi): \Dia\psi \in \Theta\}$.
    \end{enumerate}
    Let $(\A_1, V_1, D_1), \dots, (\A_n, V_n, D_n)$ be an enumeration of such tuples. We show that for any $\S$-algebra $\B$, $\B \models \rho$ iff $ \B \models\rho(\A_1, D_1), \dots, \rho(\A_n, D_n)$.

    Suppose that $\B \not\models \rho(\A_i, D_i)$ for some $1 \leq i \leq n$. Then there is a stable embedding $h: \A_i \inj_{D_i} \B$ by \Cref{3: Thm sc rule char}. Define a valuation $V$ on $\B$ by $V(p) = h \circ V_i (p)$. Since $h$ satisfies CDC for $D_i$, we have $V(\phi) = h \circ V_i (\phi)$ for all $\phi \in \Theta$. Therefore, since $\A_i, V_i \not\models \rho$ by the enumeration, $V_i(\gamma) = 1$ for all $\gamma \in \Gamma$ and $V_i(\delta) \neq 1$ for all $\delta \in \Delta$, and it follows that $V(\gamma) = 1$ for all $\gamma \in \Gamma$ and $V(\delta) \neq 1$ for all $\delta \in \Delta$, namely, $\B, V \not\models \rho$.

    Conversely, suppose that $\B \not\models \rho$. Let $V$ be a valuation on $\B$ such that $\B, V \not\models \rho$ and $V(p) = 0$ for $p \notin \Theta'$. Then, there is a definable filtration $(\B', V')$ of $(\B, V)$ for $\Theta$ through $\Theta'$ such that $\B' \models \S$. By the definition of definable filtrations, $B'$ is a Boolean subalgebra of $B$ generated by $V[\Theta']$, so $B'$ as a Boolean algebra is at most $m$-generated. By \Cref{3: Lem filtration}, since $\Gamma \cup \Delta \subseteq \Theta$ and $\B, V \not\models \rho$, we obtain $\B', V' \not\models \rho$. Let $D = \{V(\psi): \Dia\psi \in \Theta\}$. Then, the tuple $(\B', V', D)$ is identical to $(\A_i, V_i, D_i)$ for some $1 \leq i \leq n$. Since $\B' \inj_D \B$ by the definition of definable filtration, we have $\A_i \inj_{D_i} \B$, namely, $\B \not\models \rho(\A_i, D_i)$ by \Cref{3: Thm sc rule char}. Therefore, we conclude that $\B \models \rho$ iff $ \B \models\rho(\A_1, D_1), \dots, \rho(\A_n, D_n)$.
\end{proof}

It follows directly from \Cref{3: Thm scr complete} that any formula $\phi$ is also semantically equivalent to finitely many stable canonical rules over any rule system that admits definable filtration, by identifying $\phi$ with the rule $/\phi$. Thus, we obtain the following axiomatization result.

\begin{theorem} \label{3: Cor scr complete} \leavevmode
    \begin{enumerate}
        \item Let $\S$ be a rule system that admits definable filtration. Every rule system $\S' \supseteq \S$ is axiomatizable over $\S$ by stable canonical rules. Moreover, if $\S'$ is finitely axiomatizable over $\S$, then $\S'$ is axiomatizable over $\S$ by finitely many stable canonical rules.
        \item Let $L$ be a logic that admits definable filtration. Every logic $L' \supseteq L$ is axiomatizable over $L$ by stable canonical rules. Moreover, if $L'$ is finitely axiomatizable over $L$, then $L'$ is axiomatizable over $L$ by finitely many stable canonical rules.
    \end{enumerate}
\end{theorem}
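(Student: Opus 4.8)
The plan is to derive this as a corollary of \Cref{3: Thm scr complete}, reducing part (2) to part (1) via the correspondence between logics and rule systems, so that essentially all the work is in part (1). Throughout I will use the algebraic completeness of rule systems, i.e.\ that the assignment $\S \mapsto \U(\S)$ is injective (equivalently $\S(\U(\S)) = \S$): to prove that two rule systems containing $\S_\K$ coincide, it suffices to show they determine the same class of modal algebras.

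For part (1), let $\S' \supseteq \S$ and fix a set $\R$ of rules axiomatizing $\S'$ over $\S$; in the general case take $\R = \S'$, and in the finitely-axiomatizable case take $\R$ to be a finite axiomatization. For each $\rho \in \R$, since $\S$ admits definable filtration, \Cref{3: Thm scr complete} supplies finitely many stable canonical rules $\rho(\A^\rho_1, D^\rho_1), \dots, \rho(\A^\rho_{n_\rho}, D^\rho_{n_\rho})$, each $\A^\rho_i$ a finite $\S$-algebra, such that for every $\S$-algebra $\B$ we have $\B \models \rho$ iff $\B$ validates all of the $\rho(\A^\rho_i, D^\rho_i)$. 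Let $\Xi$ be the union of these finite families over all $\rho \in \R$. I claim $\S + \Xi = \S'$.

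To verify the claim I compute the associated universal class. For any $\S$-algebra $\B$ (i.e.\ $\B \in \U(\S)$), the key chain of equivalences is: $\B \models \Xi$ iff $\B \models \rho(\A^\rho_i, D^\rho_i)$ for all $\rho \in \R$ and all $i$, iff $\B \models \rho$ for all $\rho \in \R$ (by the equivalence of \Cref{3: Thm scr complete}, which applies precisely because $\B$ is an $\S$-algebra), iff $\B \in \U(\S')$ (since $\R$ axiomatizes $\S'$ over $\S$ and $\B$ already validates $\S$). Hence $\U(\S + \Xi) = \U(\S) \cap \U(\Xi) = \U(\S')$, using $\U(\S') \subseteq \U(\S)$. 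By algebraic completeness, $\S + \Xi = \S'$, so $\Xi$ axiomatizes $\S'$ over $\S$ by stable canonical rules. If $\R$ is finite, then $\Xi$, being a finite union of finite families, is finite, which gives the ``moreover'' clause.

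Part (2) then follows by transporting along $L \mapsto \S_L$. Since $L$ admits definable filtration iff $\S_L$ does, and $L' \supseteq L$ gives $\S_{L'} \supseteq \S_L$, applying part (1) with $\S = \S_L$ and $\S' = \S_{L'}$ yields a family $\Xi$ of stable canonical rules with $\S_{L'} = \S_L + \Xi$; by the definition of axiomatization for logics this says precisely that $L'$ is axiomatizable over $L$ by the stable canonical rules in $\Xi$. If $L'$ is finitely axiomatizable over $L$, then $\S_{L'}$ is finitely axiomatizable over $\S_L$, and the ``moreover'' clause of part (1) delivers a finite $\Xi$. The only delicate point throughout is bookkeeping: keeping every use of the equivalence from \Cref{3: Thm scr complete} relativized to $\U(\S)$, and invoking the injectivity of $\S \mapsto \U(\S)$ to pass from equality of universal classes back to equality of rule systems. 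Both are routine within the framework, so I expect no substantive obstacle beyond this careful relativization.
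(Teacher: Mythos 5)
Your proposal is correct and follows essentially the same route as the paper: part (1) is obtained by taking, for each rule in an axiomatization of $\S'$ over $\S$, the finite family of stable canonical rules supplied by \Cref{3: Thm scr complete}, observing that the union axiomatizes $\S'$ because both rule systems determine the same class of $\S$-algebras; part (2) is reduced to part (1) via the correspondence $L \mapsto \S_L = \Sigma(L)$, exactly as the paper does. The only cosmetic difference is that you make explicit the appeal to algebraic completeness (injectivity of $\S \mapsto \U(\S)$), which the paper leaves implicit in its final ``Thus''.
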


\begin{proof} \leavevmode
    \begin{enumerate}
        \item Let $\S$ be a rule system that admits definable filtration. For any rule system $\S' \supseteq \S$, $\S' = \S + \{\rho_i: i \in I\}$ for a set $\{\rho_i: i \in I\}$ of rules. By \Cref{3: Thm scr complete}, each rule $\rho_i$ is semantically equivalent to a finite set of stable canonical rules $\{\rho(\A_{ij}, D_{ij}): 1 \leq j \leq n_i\}$ for $\S$-algebras. So, for any $\S$-algebra $\B$, $\B \models \S'$ iff $\B \models \rho(\A_{ij}, D_{ij})$ for all $i \in I$ and $1 \leq j \leq n_i$. Thus, $\S' = \S + \{\rho(\A_{ij}, D_{ij}): i \in I, 1 \leq j \leq n_i\}$. Moreover, if $\S'$ is finitely axiomatizable over $\S$, then we can choose $I$ to be finite, hence the set $\{\rho(\A_{ij}, D_{ij}): i \in I, 1 \leq j \leq n_i\}$ is also finite.

        \item Let $L$ be a logic that admits definable filtration. For any logic $L' \supseteq L$, $L' = L + \{\phi_i: i \in I\}$ for a set $\{\phi_i: i \in I\}$ of formulas. So, $\Sigma(L') = \Sigma(L) + \{/\phi_i: i \in I\}$. By (1), there is a set $\{\rho(\A_j, D_j): j \in J\}$ of stable canonical rules such that $\Sigma(L') = \Sigma(L) + \{\rho(\A_j, D_j): j \in J\}$. Moreover, if $L'$ is finitely axiomatizable over $L$, then we can choose $I$ to be finite, so that $J$ can also be chosen to be finite by (1).
    \end{enumerate}
\end{proof}

As we mentioned, many logics, such as $\K$, $\logic{T}$, $\logic{D}$, $\Kf$, and $\Sf$ admit standard filtration. We will see in the next section that pre-transitive logics $\Kff{m+1}{1} = \K + \Dia^{m+1} p \to \Dia p$ ($m \geq 1)$ admit definable filtration (\Cref{3: Thm K4m1 admits filtration}), while it is unknown if they admit standard filtration (see \Cref{Rem mismatch}). Thus, these results apply to them and their corresponding rule systems.

\begin{remark} \label{3: Rem scr computable}
    If the base rule system $\S$ is decidable, then the result of \Cref{3: Cor scr complete} is computable; that is, given a finite axiomatization of $\S'$ over $\S$, a finite set of stable canonical rules that axiomatize $\S'$ over $\S$ can be computed. This can be shown by observing that if $\S$ is decidable, then in the proof of \Cref{3: Thm scr complete}, the enumeration of tuples $(\A, V, D)$ is computable, so the finite set $\rho(\A_1, D_1), \dots, \rho(\A_n, D_n)$ of stable canonical rules that is equivalent to the given rule $\rho$ is also computable. This holds as well if the base logic is decidable. Since a finitely axiomatizable rule system with the fmp is decidable, this is the case for all logics we mentioned above. 
\end{remark}

\section{Gabbay's filtration} \label{Sec 4}

In this section, we give an algebraic proof of the fact that a certain type of pre-transitive logics admits definable filtration. Recall that pre-transitive logics $\Kff{m}{n}$ ($n<m$) are logics axomatized by $\Dia^m p \to \Dia^n p$ (or equivalently, $\Box^n p \to \Box^m p$) over $\K$. The logic $\Kff{m}{n}$ defines the condition 
\[\forall x \forall y \: (x R^m y \to x R^n y),\]
where $R^k$ is the $k$-time composition of $R$, on modal spaces. Note that $\Kff{2}{1}$ is the transitive logic $\Kf$. 

Despite the success of Lemmon filtration (a standard filtration) for $\Kf$, the fmp of pre-transitive logics in general, such as $\Kf^3_2$, is a long-standing open problem (e.g., \cite[Problem 11.2]{czModalLogic1997}), let alone definable filtration. Gabbay \cite{ageneralfiltrationmethod1972} provided a definable filtration for $\Kff{m+1}{1}$ ($m \geq 1$). However, as far as we know, there is no standard filtration constructed for these logics. Let us illustrate the difficulty in the following remark.

\begin{remark} \label{Rem mismatch}
    Let us try to construct a standard filtration for $\Kff{3}{1}$. Recall that the Lemmon filtration for $\Kf$ was defined by
    \[[x] R^L [y] \text{ iff } (y \models \phi \lor \Dia \phi \text{ implies } x \models \Dia \phi, \text{ for all $\Dia \phi \in \Theta$}).\]
    Lemmon filtration fails for $\Kff{3}{1}$ because the resulting quotient map may not be stable (i.e., order-preserving), thus the construction is no longer a filtration. The following example illustrates this failure, where $\Theta = \{p, \Dia p\}$, $xRy$, and $\lnot ([x] R^L [y])$.
    \begin{figure}[h]
        \centering
        
        \begin{tikzpicture}[scale=1]
            \node (x) at (0,0) {\( \bullet \)};
            \node (xx) at (-0.4,0) {\( x \)};
            \node (xxx) at (0.7,0) {\( \Dia \Dia p \)};
            \node (y) at (0,1) {\( \bullet \)};
            \node (yy) at (-0.4,1) {\( y \)};
            \node (yyy) at (0.5,1) {\( \Dia p \)};
            \node (z) at (0,2) {\( \bullet \)};
            \node (zz) at (-0.4,2) {\( z \)};
            \node (zzz) at (0.4,2) {\(p \)};
            \draw[->] (x) -- (y);
            \draw[->] (y) -- (z);

            \node (x1) at (6,2) {\( \circ \)};
            \node (xx1) at (6.4,2) {\( [x] \)};
            \node (y1) at (5,1) {\( \circ \)};
            \node (yy1) at (5-0.4,1) {\( [y] \)};
            \node (z1) at (5,2) {\( \bullet \)};
            \node (zz1) at (5-0.4,2) {\( [z] \)};
            \draw[->] (y1) -- (z1);
            \draw[->] (y1) -- (x1);
            \draw[->] (z1) -- (x1);
        \end{tikzpicture}
    \end{figure}
    This happens because of the ``$\Dia \phi$'' part in the premise ``$y \models \phi \lor \Dia \phi$''. Indeed, suppose that $xRy$. Then, if $y \models \phi \lor \Dia \phi$ for some $\Dia \phi \in \Theta$, we have $x \models \Dia \phi \lor \Dia \Dia \phi$, but we cannot apply the fact that $x \models \Dia \Dia \Dia p \to \Dia p$ to deduce $x \models \Dia p$, which was possible if $x \models \Kf$. In other words, we cannot conclude $[x] R^L [y]$.

    A natural attempt to fix this failure for $\Kff{3}{1}$ is to replace the premise ``$y \models \phi \lor \Dia \phi$'' by ``$y \models \phi \lor \Dia \Dia \phi$''. However, this modified definition may not be well-defined: if $\Dia \phi \in \Theta$ but $\Dia \Dia \phi \notin \Theta$, then $[y] = [y']$ does not imply that $y$ and $y'$ agree on $\Dia \Dia \phi$.
    
    One might further try to extend $\Theta$ to $\Theta' = \{\Dia \Dia \phi: \Dia \phi \in \Theta\}$, while the issue persists with a formula $\Dia \Dia \phi \in \Theta'$ such that $\Dia \Dia \Dia \phi \not \in \Theta$. So, this approach only leads to infinitely many extensions and an infinite set of formulas, so that the resulting filtration may no longer be finite. From these observations, we can see that the problem comes from the presupposition that we use the same set of formulas to define the equivalence relation and the accessibility relation between the equivalence classes. The idea of definable filtration is exactly to remove this constraint, which is unnecessary for many purposes.
    
\end{remark}

We present an algebraic definable filtration construction for $\Kff{m+1}{1}$ ($m \geq 1$), meaning that when applied to a $\Kff{m+1}{1}$-algebra, the filtrated algebra is also a $\Kff{m+1}{1}$-algebra. This filtration will be used to develop the theory of stable canonical formulas for pre-transitive logics $\Kff{m+1}{1}$ in \Cref{Sec 5}. The construction is the algebraic dual of the frame-theoretic filtration presented in the proof of \cite[Theorem 8]{ageneralfiltrationmethod1972}. 

\begin{lemma} \label{3: Lem K4m1 filtration}
    Let $\A = (A, \Dia)$ be a $\Kff{m+1}{1}$-algebra, $V$ be a valuation on $\A$, and $\Theta$ be a finite subformula-closed set of formulas. Let $\Theta' = \Sub(\Theta \cup \{\Dia^m\phi: \phi \in \Theta\})$ and $A'$ be the Boolean subalgebra of $A$ generated by $V[\Theta']$. Define the modal oparators $\Dia_0$ and $\Dia_1$ on $A'$ by
    \[\Dia_0 a = \bigwedge \{b \in A': \Dia a \leq b\} \text{ and } \Dia_1a = \bigvee \{\Dia_0^{km+1}a: k \in \omega\}. \]
    Then, $\A' = (A', \Dia_1)$ is a definable filtration of $\A$ for $\Theta$ through $\Theta'$ and $\A' \models \Kff{m+1}{1}$. 
\end{lemma}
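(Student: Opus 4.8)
We must show two things: that $\A' = (A', \Dia_1)$ is a definable filtration of $\A$ for $\Theta$ through $\Theta'$, and that $\A' \models \Kff{m+1}{1}$. By \Cref{3: Def alg filtration}, the filtration claim amounts to verifying (1) that $A'$ is the Boolean subalgebra generated by $V[\Theta']$ — which holds by construction — and (2) that the inclusion $(A', \Dia_1) \inj (A, \Dia)$ is a stable homomorphism satisfying CDC for $D = \{V(\phi) : \Dia\phi \in \Theta\}$. The bulk of the work is analyzing the two operators $\Dia_0$ and $\Dia_1$.

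**Understanding $\Dia_0$.** First I would establish that $\Dia_0$ is the least filtration operator: $\Dia_0 a = \bigwedge\{b \in A' : \Dia a \leq b\}$ is exactly the $\Dia^l$ from \Cref{3: Ex filtration}(1). Since $A'$ is a finite Boolean algebra (Boolean algebras are locally finite and $\Theta'$ is finite), this meet exists and $\Dia_0 a$ is the least element of $A'$ above $\Dia a$. Hence $\Dia a \leq \Dia_0 a$, which gives stability of $\Dia_0$, and $\Dia_0$ is monotone. The key technical lemma I expect to need is a \emph{transfer} property: because $\Dia_0 a$ is the least element of $A'$ above $\Dia a$ and $\Dia$ is monotone and additive, I would show $\Dia^{k}a \leq \Dia_0^{k} a$ by induction, and more delicately that applying $\Dia_0$ repeatedly stays controlled by $\Dia$. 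I would prove the CDC for $\Dia_0$ directly: for $V(\phi)$ with $\Dia\phi \in \Theta$, one checks $\Dia_0 V(\phi) = \Dia V(\phi)$ by noting $\Dia V(\phi) = V(\Dia\phi) \in A'$ (since $\Dia\phi \in \Theta \subseteq \Theta'$), so $\Dia V(\phi)$ itself is the least element of $A'$ above it.

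**The operator $\Dia_1$ and stability.** Next, $\Dia_1 a = \bigvee_{k \in \omega} \Dia_0^{km+1} a$ is a finite join in the finite algebra $A'$ (it stabilizes). Stability of $\Dia_1$ follows from $\Dia a \leq \Dia_0 a \leq \Dia_0^{1} a \leq \Dia_1 a$ (taking $k=0$). For the CDC for $\Dia_1$, I must show $\Dia_1 V(\phi) = \Dia V(\phi)$ when $\Dia\phi \in \Theta$. Here is where the choice $\Theta' = \Sub(\Theta \cup \{\Dia^m\phi : \phi \in \Theta\})$ becomes essential: for $\Dia\phi \in \Theta$ we have $\Dia^m\phi \in \Theta'$ available, so $V(\Dia^{m}\phi) \in A'$, and I expect the pre-transitivity $\Dia^{m+1}p \to \Dia p$ in $\A$ to force $\Dia_0^{km+1}V(\phi) \leq \Dia V(\phi)$ for all $k \geq 1$, collapsing the join to its $k=0$ term $\Dia_0 V(\phi) = \Dia V(\phi)$. \textbf{This collapse is the main obstacle}, and it is exactly where both the pre-transitive axiom and the enlarged $\Theta'$ must cooperate; I would isolate it as the crux lemma, proving $\Dia_0^{m+1}a \leq \Dia_0 a$ on the relevant elements by relating $\Dia_0^{m+1}$ to $\Dia^{m+1}$ and using $\Dia^{m+1}a \leq \Dia a$ in $\A$, with the $\Dia^m\phi \in \Theta'$ membership guaranteeing the intermediate approximations stay inside $A'$ tightly enough.

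**Verifying $\A' \models \Kff{m+1}{1}$.** Finally, to show $(A', \Dia_1) \models \Dia^{m+1}p \to \Dia p$, i.e.\ $\Dia_1^{m+1} a \leq \Dia_1 a$ for all $a \in A'$, I would argue purely from the definition of $\Dia_1$ as the union of the $\Dia_0^{km+1}$-orbit. The design of $\Dia_1$ as $\bigvee_k \Dia_0^{km+1}a$ is precisely engineered so that $\Dia_1$ absorbs $m$-fold extra iterations: one computes that $\Dia_1^{m+1}a$ is a join of terms $\Dia_0^{j}a$ with $j \equiv 1 \pmod m$ and $j \geq 1$ (using that $\Dia_1$ itself is a join over the arithmetic progression $km+1$, and iterating $m+1$ times shifts the exponent by a multiple of $m$ plus the base offsets), each of which already appears in $\Dia_1 a$. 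Thus $\Dia_1^{m+1}a \leq \Dia_1 a$ follows by absorption. I would state this arithmetic of exponents modulo $m$ as a short combinatorial sublemma and then conclude.
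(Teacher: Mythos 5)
Your proposal is correct and takes essentially the same route as the paper's proof: identify $\Dia_0$ as the least filtration, use the enlarged $\Theta'$ (which puts $\Dia^k\phi$ for $k \leq m+1$ inside $\Theta'$ whenever $\Dia\phi \in \Theta$) together with pre-transitivity of $\A$ to collapse $\Dia_0^{km+1}d \leq \Dia_0 d = \Dia d$ for $d \in D$, and verify $\Dia_1^{m+1}a \leq \Dia_1 a$ via the exponent arithmetic $\Dia_1^l a = \bigvee_k \Dia_0^{km+l}a$, exactly as the paper does. The only step you leave implicit is the routine check that $\Dia_1$ is a modal operator (it preserves $0$ and binary joins), which follows from the additivity of $\Dia_0$ that your exponent computation already relies on.
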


\begin{proof}
    Note that $\Theta'$ is a finite subformula-closed set, so $A'$ is finite since Boolean algebras are locally finite. For each $a \in A'$, since $A'$ is finite, there exists $k_a \in \omega$ such that $\{\Dia_0^{km+1}a: k \leq k_a\} = \{\Dia_0^{km+1}a: k \in \omega\}$. Let $K = \max\{k_a: a \in A'\}$, which also exists because $A'$ is finite. Then, $\Dia_1a = \bigvee \{\Dia_0^{km+1}a: k \leq K\}$ for any $a \in A'$, so $\Dia_1$ is always a finite join and well-defined. In fact, $\Dia_1a = \bigvee \{\Dia_0^{km+1}a: k \leq K'\}$ for any $K' \geq K$.

    Let $D' = \{V(\phi): \Dia \phi \in \Theta'\}$. We know from \Cref{3: Ex filtration} that $(A', \Dia_0)$ is the least filtration of $\A$ through $\Theta'$, so $i: (A', \Dia_0) \inj_{D'} (A, \Dia)$, where $i$ is the inclusion map. Since $\Dia_0 0 = 0$, we have $\Dia_10 = 0$. Since $A'$ is closed under finite joins and $\Dia_0$ preserves them, we have 
    \begin{align*}
        \Dia_1a \lor \Dia_1b &= \bigvee \{\Dia_0^{km+1}a: k \leq K\} \lor \bigvee \{\Dia_0^{km+1}b: k \leq K\} \\
        &= \bigvee \{\Dia_0^{km+1}a \lor \Dia_0^{km+1}b: k \leq K\} \\
        &= \bigvee \{\Dia_0^{km+1}(a \lor b): k \leq K\} \\
        &= \Dia_1(a \lor b).
    \end{align*}
    So, $\Dia_1$ fixes 0 and commutes with $\lor$, hence $(A', \Dia_1)$ is a modal algebra. Next, we show by induction that $\Dia_1^la = \bigvee \{\Dia_0^{km+l}a: k \leq K\}$ for $l \geq 1$. This holds for $l=1$ by the definition of $\Dia_1$. Assuming that it holds for $l$, we have
    \begin{align*}
        \Dia_1^{l+1}a &= \Dia_1\Dia_1^la \\
        &= \bigvee \{\Dia_0^{km+1}\Dia_1^l a: k \leq K\} \\
        &= \bigvee \{\Dia_0^{km+1}\bigvee \{\Dia_0^{k'm+l}a: k' \leq K\}: k \leq K\} \\
        &= \bigvee \{\bigvee \{\Dia_0^{km+1}\Dia_0^{k'm+l}a: k' \leq K\}: k \leq K\} \\
        &= \bigvee \{\Dia_0^{(k+k')m+l+1} a: k, k' \leq K\} \\
        &= \bigvee \{\Dia_0^{km+l+1} a: k \leq 2K\} \\
        &= \bigvee \{\Dia_0^{km+l+1} a: k \leq K\},
    \end{align*}
    showing that the statement also holds for $l+1$. So, $\Dia_1^la = \bigvee \{\Dia_0^{km+l}a: k \leq K\}$ holds for $l \geq 1$, and we obtain that for any $a \in A'$, 
    \begin{align*}
        \Dia_1^{m+1}a &= \bigvee \{\Dia_0^{km+m+1}a: k \leq K\} \\
        &= \bigvee \{\Dia_0^{(k+1)m+1}a: k \leq K\} \\
        &\leq \bigvee \{\Dia_0^{km+1}a: k \leq K+1\} \\
        &= \bigvee \{\Dia_0^{km+1} a: k \leq K\} \\
        &= \Dia_1a.
    \end{align*}
    Thus, $(A', \Dia_1)$ is a $\Kff{m+1}{1}$-algebra. 

    Let $D = \{V(\phi): \Dia \phi \in \Theta\}$. It remains to show that $i': (A', \Dia_1) \inj_D (A, \Dia)$, where $i' = i$ as a map. Note that $i'$ is a Boolean embedding. Since $i$ is stable and $\Dia_0 a \leq \Dia_1 a$ by definition, we have $\Dia i'(a) = \Dia i(a) \leq i(\Dia a) = i'(\Dia_0a) \leq i'(\Dia_1a)$. So, $i'$ is stable. 

    Let $d \in D$. Then, $d = V(\phi)$ for some $\Dia \phi \in \Theta$, so $\Dia^{m+1} \phi, \dots, \Dia \phi \in \Theta'$ and $\Dia^md, \dots, d \in D'$. Moreover, since $\Dia_0$ is the least filtration, we have $\Dia^md = \Dia_0^md, \dots, \Dia d = \Dia_0 d$ (see, e.g., \cite[Lemma 4.5]{stablecanonicalrules}). Since $i$ satisfies CDC for $D'$ and $d \in D'$, we have $i'(\Dia_0d) \leq \Dia i'(d)$. Assume that $i'(\Dia_0^{m'}d) \leq \Dia^{m'} i'(d)$ for some $ 1 \leq m' \leq m$. Then $\Dia i'(\Dia_0^{m'}d) \leq \Dia^{m'+1} i'(d)$. Again since $i$ satisfies CDC for $D'$ and $\Dia_0^{m'}d = \Dia^{m'} d \in D'$, we have $i'(\Dia_0^{m'+1}d) \leq \Dia i'(\Dia_0^{m'}d)$, thus $i'(\Dia_0^{m'+1}d) \leq \Dia^{m'+1} i'(d)$. Inductively, we obtain $i'(\Dia_0^{m+1}d) \leq \Dia^{m+1} i'(d)$. Morover, since $(A, \Dia)$ is a $\Kff{m+1}{1}$-algebra and $i$ is stable, we have $\Dia^{m+1} i'(d) \leq \Dia i'(d) \leq i'(\Dia_0d)$. So, $i'(\Dia_0^{m+1}d) \leq i'(\Dia_0d)$, and $\Dia_0^{m+1}d \leq \Dia_0d$ since $i'$ is a Boolean embedding. Since $\Dia_0$ is monotone, we inductively obtain $\Dia_0^{km+1}d \leq \Dia_0d$ for all $k \geq 1$. So,
    \[i'(\Dia_1d) = i'(\bigvee \{\Dia_0^{km+1}d: k \leq K\}) \leq i'(\Dia_0d) \leq \Dia i'(d).\]
    Thus, $i'$ satisfies CDC for $D$. Hence, $i': (A', \Dia_1) \inj_D (A, \Dia)$ and $(A', \Dia_1)$ is a definable filtration of $(A, \Dia)$ for $\Theta$ through $\Theta'$.  
\end{proof}

\begin{theorem} \label{3: Thm K4m1 admits filtration}
    For any $m \geq 1$, the logic $\Kff{m+1}{1}$ admits definable filtration. 
\end{theorem}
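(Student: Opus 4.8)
The plan is to read the theorem off directly from \Cref{3: Lem K4m1 filtration}, which already carries all the substantive content; what remains is to check that the construction there witnesses the definition of admitting definable filtration (\Cref{3: Def admit filtration}) with a choice of $\Theta'$ that is \emph{uniform} across the whole variety.

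First I would unfold the definition in the logic case: $\Kff{m+1}{1}$ admits definable filtration iff the variety $\V(\Kff{m+1}{1})$ does, that is, iff for every finite subformula-closed set $\Theta$ there exists a \emph{single} finite subformula-closed set $\Theta' \supseteq \Theta$ such that every $\A \in \V(\Kff{m+1}{1})$ equipped with any valuation $V$ admits a definable filtration for $\Theta$ through this fixed $\Theta'$ lying again in $\V(\Kff{m+1}{1})$. The crucial point is that $\Theta'$ must be fixed before quantifying over the pair $(\A, V)$.

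Given $\Theta$, I would set $\Theta' = \Sub(\Theta \cup \{\Dia^m \phi : \phi \in \Theta\})$, exactly as in the lemma. This set is finite because $\Theta$ is finite, is subformula-closed by construction, and contains $\Theta$; above all it depends only on $\Theta$ and on no algebra or valuation. Then for an arbitrary $\Kff{m+1}{1}$-algebra $\A$ and any valuation $V$, \Cref{3: Lem K4m1 filtration} produces the modal algebra $\A' = (A', \Dia_1)$ and establishes both that it is a definable filtration of $\A$ for $\Theta$ through this very $\Theta'$ and that $\A' \models \Kff{m+1}{1}$, so $\A' \in \V(\Kff{m+1}{1})$. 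This is precisely the requirement of \Cref{3: Def admit filtration}, so I would conclude that $\Kff{m+1}{1}$ admits definable filtration.

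There is essentially no obstacle at the level of the theorem itself: all the difficulty lives in the lemma, and in particular in verifying that the operator $\Dia_1 a = \bigvee\{\Dia_0^{km+1} a : k \in \omega\}$ simultaneously yields a $\Kff{m+1}{1}$-algebra and keeps the inclusion stable while satisfying CDC for $D$. The only thing one genuinely must double-check when packaging the lemma into the theorem is the uniformity of $\Theta'$, namely that it does not secretly depend on $\A$ or $V$; inspecting the construction, $\Theta'$ is manifestly determined by $\Theta$ alone, whereas the finite bound $K$ used inside the proof, which does depend on the particular $A'$, enters only the internal definition of $\Dia_1$ and never the choice of $\Theta'$. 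Hence the quantifier order demanded by the definition is respected and the conclusion follows.
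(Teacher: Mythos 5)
Your proposal is correct and matches the paper's own proof exactly: the theorem is obtained by packaging \Cref{3: Lem K4m1 filtration}, with the only point requiring attention being that $\Theta' = \Sub(\Theta \cup \{\Dia^m \phi : \phi \in \Theta\})$ depends only on $\Theta$ and not on the algebra or valuation, which is precisely the remark the paper makes. Your additional observation that the bound $K$ enters only the internal definition of $\Dia_1$ and not the choice of $\Theta'$ is a correct and slightly more explicit articulation of the same point.
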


\begin{proof}
    This follows from \Cref{3: Lem K4m1 filtration}. Note that $\Theta'$ defined in \Cref{3: Lem K4m1 filtration} does not depend on $\A$.
\end{proof}

\begin{corollary}[\cite{ageneralfiltrationmethod1972}] \label{3: Cor K4m1 fmp}
    For any $m \geq 1$, the logic $\Kff{m+1}{1}$ has the fmp.
\end{corollary}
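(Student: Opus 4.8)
The plan is to derive \Cref{3: Cor K4m1 fmp} directly from the two results that immediately precede it, with no new work. By \Cref{3: Thm K4m1 admits filtration}, for each fixed $m \geq 1$ the logic $\Kff{m+1}{1}$ admits definable filtration in the sense of \Cref{3: Def admit filtration}. The machinery connecting ``admits definable filtration'' to the finite model property is already packaged in \Cref{3: Prop filtration implies fmp}, which states that any logic (or rule system) admitting definable filtration has the fmp. So the proof is essentially a one-line invocation: apply \Cref{3: Prop filtration implies fmp} to $L = \Kff{m+1}{1}$, using \Cref{3: Thm K4m1 admits filtration} to discharge the hypothesis.

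If I were to unfold the argument rather than cite \Cref{3: Prop filtration implies fmp} as a black box, I would proceed as follows. Fix $m \geq 1$ and suppose $\phi \notin \Kff{m+1}{1}$. Since logics correspond to varieties, there is a $\Kff{m+1}{1}$-algebra $\A$ and a valuation $V$ with $V(\phi) \neq 1$. Set $\Theta = \Sub(\{\phi\})$, a finite subformula-closed set. By \Cref{3: Lem K4m1 filtration} (the content behind \Cref{3: Thm K4m1 admits filtration}), taking $\Theta' = \Sub(\Theta \cup \{\Dia^m \psi : \psi \in \Theta\})$ and the operator $\Dia_1$ defined there yields a definable filtration $\A' = (A', \Dia_1)$ of $\A$ for $\Theta$ through $\Theta'$ with $\A' \models \Kff{m+1}{1}$, and $A'$ is finite because $\Theta'$ is finite and Boolean algebras are locally finite. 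By the Filtration Lemma (\Cref{3: Lem filtration}), $V(\phi) = V'(\phi)$ for all $\phi \in \Theta$, and in particular $V'(\phi) \neq 1$. Hence $\A'$ is a finite $\Kff{m+1}{1}$-algebra refuting $\phi$, which gives the fmp.

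I do not anticipate any genuine obstacle, since both ingredients are already established earlier in the excerpt: the substantive work was done in \Cref{3: Lem K4m1 filtration}, where the well-definedness and finiteness of $\Dia_1$, its being a $\Kff{m+1}{1}$-operator, and the verification that the inclusion $i'$ is a stable embedding satisfying CDC for $D$ were all checked. The only point meriting a remark is the ``mismatch'' between $\Theta$ and $\Theta'$ flagged in \Cref{3: Rem mismatch theta}: one must apply the Filtration Lemma only to formulas in $\Theta$ (not all of $\Theta'$), but this is automatic here because the refuted formula $\phi$ lies in $\Theta$ by construction. Given this, I would simply write the proof as the citation of \Cref{3: Prop filtration implies fmp} together with \Cref{3: Thm K4m1 admits filtration}, which is precisely what the author does.
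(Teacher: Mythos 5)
Your proposal is correct and matches the paper's proof exactly: the paper likewise derives the corollary by combining \Cref{3: Thm K4m1 admits filtration} with \Cref{3: Prop filtration implies fmp}. Your optional unfolding of the argument via \Cref{3: Lem K4m1 filtration} and the Filtration Lemma is also accurate, but the one-line citation is all that is needed.
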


\begin{proof}
    This follows from \Cref{3: Thm K4m1 admits filtration} and \Cref{3: Prop filtration implies fmp}.
\end{proof}

\section{Stable canonical formulas for pre-transitive logics} \label{Sec 5}

One of the reasons that rules are more powerful than formulas is that there is a hidden universal quantifier in the semantical interpretation of rules. This allows stable canonical rules to have better control over the structure of modal algebras and enables us to prove the semantic characterization \Cref{3: Thm sc rule char}. However, if the \emph{master modality} (see, e.g., \cite[Section 6.5]{blackburnModalLogic2001}) is ``definable'' in the base logic, then we can turn stable canonical rules into formulas. This idea is realized in \cite[Section 6]{stablecanonicalrules} for $\Kf$, where the theory of \emph{stable canonical formulas} for $\Kf$ is developed as an alternative to Zakharyaschev's canonical formulas \cite{zakharyaschevCanonicalFormulas1}.

In this section, we develop the theory of stable canonical formulas for pre-transitive logics $\Kff{m+1}{1}$, generalizing the stable canonical formulas for $\Kf$. We already showed that these logics admit definable filtration (\Cref{3: Thm K4m1 admits filtration}). As we can observe in \Cref{3: Def scf for pretran}, the master modality is also definable in $\Kff{m+1}{1}$ in a similar manner as in $\Kf$. As applications, we obtain the fmp for $\Kff{m+1}{1}$-stable logics and an axiomatic characterization of splittings and union-splittings in the lattice $\NExt{\Kff{m+1}{1}}$. The former yields large classes of logics with the fmp that are substantially different from similar classes studied in the literature. For each $m \geq 2$, there are continuum many $\Kff{m+1}{1}$-stable logics that are neither $\Kf$-stable logics nor subframe logics (\Cref{Thm conti many K4m1 stable}).

As we are dealing with logics in this section, it is useful to work with s.i.~modal algebras, as every variety is generated by its s.i.~members (see, e.g., \cite[Corollary 9.7]{ACourseInUniversalAlgebra1981}). We recall the following lemma from \cite[Lemma 6.4]{analgebraicaproachtocanonicalformulas}, which is proved as a corollary of Venema’s characterization \cite{venemaDualCharSI2004} of s.i.~modal algebras. 

\begin{lemma} \label{3: Lem si reflect stable subalg}
    Let $\A$ be a finite modal algebra and $\B$ be a s.i.~modal algebra. If $\A$ is a stable subalgebra of $\B$, i.e., $\A \inj_\emp \B$, then $\A$ is also s.i.
\end{lemma}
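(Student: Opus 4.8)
The plan is to avoid Venema's characterization entirely and argue directly from the opremum criterion for subdirect irreducibility in \Cref{2: Prop opremum}, which is self-contained. Since $\B$ is s.i., it has an opremum $d \in \B$ with $d \neq 1$. I will manufacture an opremum for $\A$ out of $d$ and conclude that $\A$ is s.i. Fix a stable embedding $h \colon \A \inj_\emp \B$; note that the CDC for $\emp$ is vacuous, so the only properties I use are that $h$ is an injective Boolean homomorphism satisfying $\Dia h(a) \leq h(\Dia a)$.

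First I would set up the key transport inequality. Dualizing stability gives $h(\Box a) \leq \Box h(a)$, and a routine induction using monotonicity of $\Box$ yields $h(\Box^n a) \leq \Box^n h(a)$ for every $n$; meeting over $k \leq n$ and using that $h$ preserves finite meets gives $h(\Boxx{\leq n} a) \leq \Boxx{\leq n} h(a)$. Informally, the approximate master box computed inside $\A$ always maps below the approximate master box computed inside $\B$. Because $\A$ is finite, for each $a \in A$ the decreasing sequence $(\Boxx{\leq n} a)_n$ stabilizes; write $a^* \coloneq \Boxx{\leq N_a} a$ for its limit.

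Next I would control each $a^*$ by $d$. For $a \neq 1$, injectivity gives $h(a) \neq 1$, so the opremum property of $d$ supplies some $n_a$ with $\Boxx{\leq n_a} h(a) \leq d$. Picking any $n \geq \max(N_a, n_a)$ and chaining the transport inequality with monotonicity of $\Boxx{\leq \cdot}$ gives $h(a^*) = h(\Boxx{\leq n} a) \leq \Boxx{\leq n} h(a) \leq \Boxx{\leq n_a} h(a) \leq d$. I would then set $c \coloneq \bigvee \{a^* : a \in A,\ a \neq 1\}$, a finite join since $\A$ is finite. Applying $h$ and using that it preserves joins yields $h(c) \leq d \neq 1$, so $h(c) \neq h(1)$ and hence $c \neq 1$ by injectivity. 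Conversely, for any $b \neq 1$ we have $\Boxx{\leq N_b} b = b^* \leq c$ by construction, so $c$ satisfies the defining condition of an opremum. By \Cref{2: Prop opremum}, $\A$ is s.i.

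The main thing to watch is the coordination of indices: the $\A$-side stabilization points $N_a$ and the $\B$-side opremum witnesses $n_a$ are produced independently, and the argument only closes because a single $n$ can be chosen to dominate both—large enough that $\Boxx{\leq n} a$ has already reached $a^*$ while $\Boxx{\leq n} h(a)$ has already fallen below $d$. The conceptual step is recognizing that stability transports the finite-side master box below the s.i.-side one; once that inequality is in hand, everything else is Boolean and monotonicity bookkeeping, and \textbf{no closed domain condition is ever invoked}, which is exactly why $\A \inj_\emp \B$ suffices.
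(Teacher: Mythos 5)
Your proof is correct, and it takes a genuinely different route from the paper. The paper does not prove this lemma at all: it imports it from \cite[Lemma 6.4]{analgebraicaproachtocanonicalformulas}, where it is obtained as a corollary of Venema's dual characterization of s.i.~modal algebras. You instead argue directly from Rautenberg's opremum criterion (\Cref{2: Prop opremum}), which the paper already states. Each of your steps checks out: stability $\Dia h(a) \leq h(\Dia a)$ dualizes to $h(\Box a) \leq \Box h(a)$, whence $h(\Boxx{\leq n} a) \leq \Boxx{\leq n} h(a)$ because $h$ preserves finite meets; finiteness of $\A$ makes each decreasing chain $(\Boxx{\leq n} a)_n$ stabilize at $a^*$ and makes $c = \bigvee \{a^* : a \neq 1\}$ a finite join; injectivity gives $h(a) \neq 1$ for $a \neq 1$, so the opremum $d$ of $\B$ yields $\Boxx{\leq n_a} h(a) \leq d$, and taking $n \geq \max(N_a, n_a)$ gives $h(a^*) \leq d$, hence $h(c) \leq d \neq 1$ and $c \neq 1$, while $\Boxx{\leq N_b} b = b^* \leq c$ holds for every $b \neq 1$ by construction; so $c$ is an opremum of $\A$. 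Your finiteness hypotheses are used exactly where needed (stabilization and finite join), and you correctly observe that CDC plays no role, which is why $\A \inj_\emp \B$ suffices. What your approach buys is self-containedness and an explicit construction: the lemma now depends only on \Cref{2: Prop opremum}, Boolean algebra, and the stability inequality, with no appeal to Venema's theorem or to duality, and it exhibits a concrete opremum of $\A$ manufactured from that of $\B$. What the paper's route buys is brevity and a connection to a deeper structural criterion for subdirect irreducibility on dual spaces, which is the tool of choice elsewhere in the literature on stable maps.
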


We first show how to construct finite refutation patterns for modal formulas. The proof is similar to \Cref{3: Thm scr complete}.

\begin{lemma} \label{3: Lem K4m1 refutation pattern}
    For any formula $\phi$, there exist pairs $(\A_1, D_1), \dots, (\A_n, D_n)$ such that each $\A_i = (A_i, \Dia_i)$ is a finite s.i.~$\Kff{m+1}{1}$-algebra, $D_i \subseteq A_i$, and for any s.i.~modal algebra $\B = (B, \Dia)$, the following conditions are equivalent:
    \begin{enumerate}
        \item[(1)] $\B \not\models \phi$.
        \item[(2)] There is $1 \leq i \leq n$ and a stable embedding $h: \A_i \inj_{D_i} \B$.
        \item[(3)] There is a s.i.~homomorphic image $\C = (C, \Dia)$ of $\B$, $1 \leq i \leq n$, and a stable embedding $h: \A_i \inj_{D_i} \C$.
    \end{enumerate}
\end{lemma}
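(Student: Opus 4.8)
The plan is to follow the proof of \Cref{3: Thm scr complete} almost verbatim, modifying it in two ways: restrict all algebras to subdirectly irreducible ones and use the pre-transitive definable filtration of \Cref{3: Lem K4m1 filtration} as the underlying filtration, and then add the homomorphic-image clause (3) on top. First I would set $\Theta = \Sub(\phi)$ and let $\Theta' = \Sub(\Theta \cup \{\Dia^m\psi : \psi \in \Theta\})$ be the extended set associated to $\Theta$ in \Cref{3: Lem K4m1 filtration}, and put $N = |\Theta'|$. Since Boolean algebras are locally finite, there are, up to isomorphism, only finitely many triples $(\A, V, D)$ in which $\A$ is a finite s.i.\ $\Kff{m+1}{1}$-algebra on an at most $N$-generated Boolean algebra with $\A, V \not\models \phi$, $V(p) = 0$ for $p \notin \Theta'$, and $D = \{V(\psi) : \Dia\psi \in \Theta\}$; I would let $(\A_1, V_1, D_1), \dots, (\A_n, V_n, D_n)$ enumerate them, and the pairs $(\A_i, D_i)$ are the claimed data.

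Next I would dispatch the easy implications. For $(2) \Rightarrow (1)$, given $h : \A_i \inj_{D_i} \B$, define a valuation $V$ on $\B$ by $V(p) = h(V_i(p))$; a routine induction on formulas in $\Theta$ — using that $h$ is a Boolean embedding for the propositional cases and CDC for $D_i$ for the $\Dia$-case, exactly as in \Cref{3: Lem filtration} — gives $V(\psi) = h(V_i(\psi))$ for all $\psi \in \Theta$, and since $\A_i, V_i \not\models \phi$ and $h$ is injective, this yields $\B, V \not\models \phi$. The implication $(2) \Rightarrow (3)$ is immediate by taking $\C = \B$, an s.i.\ homomorphic image of itself. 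For $(3) \Rightarrow (1)$, I would apply $(2) \Rightarrow (1)$ to the s.i.\ image $\C$ to get $\C \not\models \phi$; since $\V(\phi)$ is a variety and hence closed under $\H$, validity $\B \models \phi$ would force $\C \models \phi$, so $\B \not\models \phi$. This closes the cycle $(1) \Rightarrow (2) \Rightarrow (3) \Rightarrow (1)$ once the remaining implication is established.

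The substantive direction is $(1) \Rightarrow (2)$. Given $\B \not\models \phi$, fix a valuation $V$ refuting $\phi$ with $V(p) = 0$ for $p \notin \Theta'$. Applying \Cref{3: Lem K4m1 filtration} to $(\B, V)$ yields a definable filtration $(\B', V')$ of $(\B, V)$ for $\Theta$ through $\Theta'$ with $\B' \models \Kff{m+1}{1}$ and $\B' \inj_D \B$, where $D = \{V(\psi) : \Dia\psi \in \Theta\}$; here $\B'$ is finite and, being the Boolean subalgebra generated by $V[\Theta']$, is at most $N$-generated. By \Cref{3: Lem filtration} and $\phi \in \Theta$ we get $\B', V' \not\models \phi$. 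Crucially, since $\B$ is s.i.\ and $\B' \inj_\emp \B$ (CDC for $D$ trivially entails CDC for $\emp$), \Cref{3: Lem si reflect stable subalg} shows that $\B'$ is again s.i. Hence $(\B', V', D)$ coincides with one of the enumerated triples $(\A_i, V_i, D_i)$, and the filtration embedding $\B' \inj_D \B$ is precisely the required $\A_i \inj_{D_i} \B$.

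I expect the main obstacle to lie exactly at the two points where this argument departs from that of \Cref{3: Thm scr complete}. First, one must guarantee that the finite algebras produced are subdirectly irreducible, which is what makes it legitimate to restrict the enumeration to s.i.\ algebras; this rests entirely on \Cref{3: Lem si reflect stable subalg}, i.e.\ on the fact that a finite stable subalgebra of an s.i.\ algebra is itself s.i. Second, one must thread the homomorphic-image clause (3) through the cycle, which is what forces the appeal to closure of $\V(\phi)$ under $\H$ in $(3) \Rightarrow (1)$. A technical point worth flagging is that the filtration step genuinely uses that the ambient algebra $\B$ is pre-transitive — this is where \Cref{3: Lem K4m1 filtration} requires its input to be a $\Kff{m+1}{1}$-algebra — so the equivalence should be read over s.i.\ $\Kff{m+1}{1}$-algebras rather than over arbitrary s.i.\ modal algebras.
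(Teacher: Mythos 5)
Your proposal is correct and follows the paper's own proof essentially verbatim: the same enumeration of triples $(\A, V, D)$ via local finiteness of Boolean algebras, the same use of \Cref{3: Lem K4m1 filtration} together with \Cref{3: Lem filtration} and \Cref{3: Lem si reflect stable subalg} for $(1) \Rightarrow (2)$, and the same pulling of refutations through a stable embedding with CDC and then through a homomorphic image for $(3) \Rightarrow (1)$. Your closing caveat is also well taken and matches the paper, whose proof likewise begins ``Let $\B$ be a s.i.~$\Kff{m+1}{1}$-algebra,'' so the equivalence is indeed to be read over s.i.~$\Kff{m+1}{1}$-algebras despite the statement's looser wording.
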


\begin{proof}
    If $\phi \in \Kff{m+1}{1}$, then let $n=0$. Assume that $\phi \notin \Kff{m+1}{1}$ and let $\Theta = \Sub(\phi)$. Since $\Kff{m+1}{1}$ admits definable filtration by \Cref{3: Thm K4m1 admits filtration}, there is a finite subformula-closed set $\Theta'$ containing $\Theta$ such that, for any $\Kff{m+1}{1}$-algebra $\B$ and any valuation $V$ on $\B$, there is a definable filtration $(\B', V')$ of $(\B, V)$ for $\Theta$ through $\Theta'$ such that $\B' \models \Kff{m+1}{1}$. Let $l = |\Theta'|$. Since Boolean algebras are locally finite, up to isomorphism, there are finitely many tuples $(\A, V, D)$ satisfying the following conditions:
    \begin{enumerate}
        \item $\A$ is a finite s.i.~$\Kff{m+1}{1}$-algebra based on an at most $l$-generated Boolean algebra and $\A \not\models \phi$,
        \item $V$ is a valuation on $\A$ such that $\A, V \not\models \phi$ and $V(p) = 0$ for $p \notin \Theta'$,
        \item $D = \{V(\psi): \Dia\psi \in \Theta\}$.
    \end{enumerate}
    Let $(\A_1, V_1, D_1), \dots, (\A_n, V_n, D_n)$ be an enumeration of such tuples. We show that $(\A_1, D_1), \dots, (\A_n, D_n)$ is the desired set of pairs. Let $\B$ be a s.i.~$\Kff{m+1}{1}$-algebra. 

    $(1) \Rightarrow (2)$. Suppose that $\B \not\models \phi$. Let $V$ be a valuation on $\B$ such that $\B, V \not\models \phi$. Then there is a definable filtration $(\B', V')$ of $(B, V)$ for $\Theta$ through $\Theta'$ such that $\B' \models \Kff{m+1}{1}$. By the definition of definable filtration, $\B'$ is a stable subalgebra of $\B$, so $\B'$ is also s.i.~by \Cref{3: Lem si reflect stable subalg}. Then the same argument as in the proof of \Cref{3: Thm scr complete} shows that the tuple $(\B', V', D)$ is identical to $(\A_i, V_i, D_i)$ for some $1 \leq i \leq n$. Since $\B' \inj_D \B$ by the definition of definable filtration, we conclude $\A_i \inj_{D_i} \B$.

    $(2) \Rightarrow (3)$. This is obvious by taking $\C = \B$.

    $(3) \Rightarrow (1)$. Suppose that there is a s.i.~homomorphic image $\C$ of $\B$, $1 \leq i \leq n$, and a stable embedding $h: \A_i \inj_{D_i} \C$. Let $V_i$ be valuation on $\A_i$ such that $\A_i, V_i \not\models \phi$. Define a valuation $V$ on $\C$ by $V(p) = h(V_i(p))$. The same argument as in the proof of \Cref{3: Thm scr complete} shows that $\C \not\models \phi$. Thus, $\B \not\models \phi$ since $\C$ is a homomorphic image of $\B$. 
    
\end{proof}

Now we define \emph{stable canonical formulas} for pre-transitive logics $\Kff{m+1}{1}$, which capture the semantic condition (3) in the theorem above. 

\begin{definition} \label{3: Def scf for pretran}
    Let $\A$ be a finite s.i.~$\Kff{m+1}{1}$-algebra and $D \subseteq A$. Let $\rho(\A, D) = \Gamma / \Delta$ be the stable canonical rule defined in \Cref{3: Def scr}. We define the \emph{stable canonical formula} $\gamma^m(\A, D)$ as 
    \begin{align*}
        \gamma^m(A, D) &= \bigwedge\{\Box^{\leq m} \gamma: \gamma \in \Gamma\} \to \bigvee\{\Box^{\leq m}\delta: \delta \in \Delta\} \\
        &= \Box^{\leq m} \bigwedge \Gamma \to \bigvee\{\Box^{\leq m}\delta: \delta \in \Delta\},
    \end{align*}
    where $\Box^m \phi = \Box \cdots \Box \phi$ ($m$ times $\Box$) and $\Box^{\leq m} \phi = \Box^m \phi \land \dots \land \Box \phi \land \phi$.
\end{definition}

We will write $\gamma(\A, D)$ for $\gamma^1(\A, D)$; this notation is consistent with \cite{stablecanonicalrules}. The following lemma is a straightforward generalization of \cite[Lemma 4.1]{analgebraicaproachtocanonicalformulas} to pre-transitive logics $\Kff{m+1}{1}$. Recall that a filter $F$ on a modal algebra is a \emph{$\Box$-filter} if $\Box a \in F$ for each $a \in F$.

\begin{lemma} \label{3: Lem si auxiliary}
    Let $\A$ be a $\Kff{m+1}{1}$-algebra and $a, b \in \A$ such that $\Box^{\leq m} a \not\leq b$. Then there exists a s.i.~$\Kff{m+1}{1}$-algebra $\B$ and a surjective homomorphism $f: \A \to \B$ such that $f(\Box^{\leq m} a) = 1$ and $f(b) \neq 1$.
\end{lemma}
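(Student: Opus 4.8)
The plan is to work entirely with $\Box$-filters (equivalently, congruences of $\A$), using that congruences of a modal algebra correspond bijectively to $\Box$-filters and that the quotient $\A/F$ by a $\Box$-filter $F$ is exactly the surjective homomorphic image collapsing $F$ to $1$. Since $\V(\Kff{m+1}{1})$ is a variety, hence closed under homomorphic images, every such quotient is again a $\Kff{m+1}{1}$-algebra. Thus it suffices to produce a $\Box$-filter $F$ of $\A$ such that $\Box^{\leq m} a \in F$, $b \notin F$, and $\A/F$ is s.i.; taking $f \colon \A \surj \A/F$ to be the quotient map then gives $f(\Box^{\leq m} a) = 1$ and $f(b) \neq 1$.

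The key observation, and the only place pre-transitivity enters, is that $\Box^{\leq m}$ behaves like a master modality on any $\Kff{m+1}{1}$-algebra. Dually to $\Dia^{m+1} p \to \Dia p$, every such algebra satisfies $\Box c \leq \Box^{m+1} c$ for all $c$. Using this together with monotonicity of $\Box$ and preservation of finite meets, I would check the ``openness'' inequality $\Box^{\leq m} a \leq \Box(\Box^{\leq m} a)$: indeed $\Box(\Box^{\leq m} a) = \bigwedge_{i=1}^{m+1} \Box^i a$, and $\Box^{\leq m} a \leq \Box^i a$ for $i \leq m$, while $\Box^{\leq m} a \leq \Box a \leq \Box^{m+1} a$ for $i = m+1$. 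Hence the principal upset $\up(\Box^{\leq m} a) = \{x : \Box^{\leq m} a \leq x\}$ is already closed under $\Box$, so it is a $\Box$-filter, and it omits $b$ precisely because $\Box^{\leq m} a \not\leq b$. Applying Zorn's lemma to the poset of $\Box$-filters containing $\up(\Box^{\leq m} a)$ and omitting $b$ (unions of chains again omit $b$), I obtain a $\Box$-filter $F$ maximal with these properties; it is proper, contains $\Box^{\leq m} a$, and omits $b$, as required.

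Finally I would show $\A/F$ is s.i.\ by exhibiting an opremum and invoking \Cref{2: Prop opremum}, claiming that $f(b)$ works. Clearly $f(b) \neq 1$. Given any element of $\A/F$ distinct from $1$, write it as $f(d)$ with $d \notin F$; the $\Box$-filter generated by $F \cup \{d\}$ strictly contains $F$, so by maximality it contains $b$. Unwinding the standard description of this generated $\Box$-filter yields $y \in F$ and $n \in \omega$ with $y \wedge \Box^{\leq n} d \leq b$, and applying $f$ with $f(y) = 1$ gives $\Box^{\leq n} f(d) \leq f(b)$ in $\A/F$. This is exactly the opremum condition, so $\A/F$ is s.i.

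The main obstacles are the two structural facts specific to the pre-transitive setting: the openness $\Box^{\leq m} a \leq \Box(\Box^{\leq m} a)$, which makes $\Box^{\leq m}$ act as a master modality and is the sole use of the axiom; and the description of the $\Box$-filter generated by $F \cup \{d\}$, whose closure under $\Box$ again relies on $\Box c \leq \Box^{m+1} c$ (so that $\Box \Box^{\leq n} d \geq \Box^{\leq n+1} d$). Both are routine once stated, so the substance of the argument is the Zorn's-lemma extension combined with the opremum characterization of subdirect irreducibility.
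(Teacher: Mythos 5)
Your proposal is correct and takes essentially the same route as the paper: both make $\up \Box^{\leq m} a$ a $\Box$-filter via the openness inequality $\Box^{\leq m} a \leq \Box(\Box^{\leq m} a)$ (the sole use of pre-transitivity), extend it by Zorn's lemma to a $\Box$-filter maximal among those omitting $b$, and pass to the quotient, with the paper verifying subdirect irreducibility by showing $\up \Box^{\leq m} f(b)$ is the least nontrivial $\Box$-filter of the quotient --- merely another phrasing of your opremum argument via \Cref{2: Prop opremum}. One small misattribution in your commentary: closure under $\Box$ of the $\Box$-filter generated by $F \cup \{d\}$, i.e.\ $\Box(\Box^{\leq n} d) \geq \Box^{\leq n+1} d$, holds in every modal algebra, so pre-transitivity is not actually needed at that step.
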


\begin{proof}
    Let $\A$ be a $\Kff{m+1}{1}$-algebra and $a, b \in \A$ such that $\Box^{\leq m} a \not\leq b$. Let $F = \up \Box^{\leq m} a$. Then $F$ is a filter on $\A$. If $c \in F$, i.e., $c \geq \Box^{\leq m} a$ for some $c \in \A$, then since $\Box a \leq \Box^{m+1} a$ by $\A \models \Kff{m+1}{1}$, we have
    \[\Box c \geq \Box (\Box^{\leq m} a) = \Box a \land \cdots \land \Box^{m+1} a \geq \Box a \land \cdots \land \Box^{m} a \geq \Box^{\leq m} a,\]
    thus $\Box c \in F$. So, $F$ is a $\Box$-filter. By Zorn's Lemma, there is a $\Box$-filter $M$ such that $\Box^{\leq m} a \in M$ and $b \notin M$, and $M$ is maximal in this sense. 

    Let $B = A/\sim$, where $a \sim b$ iff $a \leftrightarrow b\in M$ for $a, b \in A$. Since $M$ is a $\Box$-filter, operations on $\A$ induce operations on $B$ and turn it into a modal algebra $\B$. Let $f: A \to B$ be the quotient map. It is straightforward to verify that $f$ is a surjective modal algebra homomorphism. Moreover, $f(\Box^{\leq m} a) = 1$ and $f(b) \neq 1$ since $\Box^{\leq m} a \in M$ and $b \notin M$. 

    By the correspondence between $\Box$-filters of $\B$ and $\Box$-filters of $\A$ containing $M$, if $F'$ is a $\Box$-filter on $\B$ such that $\{1\} \subsetneq F'$, then its corresponding $\Box$-filter on $\A$ contains $b$ by the maximality of $M$, so $f(b) \in F'$. It follows that $\Box^{\leq m} f(b) \in F'$. So, $\uparrow \Box^{\leq m} f(b)$ is the smallest $\Box$-filter of $\B$ properly containing $\{1\}$. Therefore, $B$ is s.i.
\end{proof}

\begin{theorem} \label{3: Thm scf char}
    Let $\A$ be a finite s.i.~$\Kff{m+1}{1}$-algebra and $D \subseteq A$. Then, for any  $\Kff{m+1}{1}$-algebra $\B$, 
    \[\B \not\models \gamma^m(\A, D) \text{ iff there is a s.i.~homomorphic image $\C$ of $\B$ such that $\A \inj_D \C$}.\]
\end{theorem}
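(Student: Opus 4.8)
The plan is to prove the biconditional by relating the stable canonical formula $\gamma^m(\A, D)$ to the stable canonical rule $\rho(\A, D) = \Gamma/\Delta$, exploiting the fact that in $\Kff{m+1}{1}$ the operator $\Box^{\leq m}$ behaves like a master modality. The key observation is that $\gamma^m(\A, D)$ is refuted on $\B$ precisely when a valuation makes $\Box^{\leq m}\bigwedge\Gamma$ equal to $1$ (equivalently, makes $\bigwedge\Gamma$ true at every point reachable in up to $m$ steps, which by the pre-transitivity axiom $\Dia^{m+1}p \to \Dia p$ collapses to the full generated submodel) while making each $\Box^{\leq m}\delta \neq 1$. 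I expect to leverage \Cref{3: Thm sc rule char}, which says $\C \not\models \rho(\A, D)$ iff $\A \inj_D \C$, so the heart of the matter is showing that refuting the formula on $\B$ corresponds to refuting the rule on a suitable s.i.~homomorphic image of $\B$.

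For the direction from right to left, I would assume a s.i.~homomorphic image $\C$ of $\B$ with $\A \inj_D \C$. By \Cref{3: Thm sc rule char}, $\C \not\models \rho(\A, D)$, so there is a valuation $V$ on $\C$ with $V(\gamma) = 1$ for all $\gamma \in \Gamma$ and $V(\delta) \neq 1$ for all $\delta \in \Delta$. From $V(\gamma) = 1$ I would derive $V(\Box^{\leq m}\bigwedge\Gamma) = 1$ (since $\Box 1 = 1$ in any modal algebra), making the antecedent of $\gamma^m(\A, D)$ equal to $1$; meanwhile, because $\C$ is s.i.~and each $V(\delta) \neq 1$, I would use the opremum from \Cref{2: Prop opremum}, or directly \Cref{3: Lem si auxiliary}, to argue that $\bigvee\{\Box^{\leq m}\delta : \delta \in \Delta\} \neq 1$ as well. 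This yields $\C \not\models \gamma^m(\A, D)$, and since validity of formulas is preserved under homomorphic images (varieties are closed under $\H$), refutation lifts back: $\B \not\models \gamma^m(\A, D)$.

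For the harder left-to-right direction, I would assume $\B \not\models \gamma^m(\A, D)$, so there is a valuation with $V(\Box^{\leq m}\bigwedge\Gamma) = 1$ but $V(\bigvee\{\Box^{\leq m}\delta\}) \neq 1$. The strategy is to pass to an appropriate s.i.~quotient: apply \Cref{3: Lem si auxiliary} with $a = \bigwedge\Gamma$ and $b = \bigvee\{\Box^{\leq m}\delta : \delta \in \Delta\}$ (after checking $\Box^{\leq m} a \not\leq b$, which is exactly the refutation of the formula), obtaining a s.i.~$\Kff{m+1}{1}$-algebra $\C$ with a surjection $f\colon \B \to \C$ such that $f(\Box^{\leq m}\bigwedge\Gamma) = 1$ and $f(b) \neq 1$. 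Then, pushing the valuation forward along $f$, I would verify that the induced valuation on $\C$ witnesses $\C \not\models \rho(\A, D)$: the antecedent condition $f(\Box^{\leq m}\bigwedge\Gamma) = 1$ forces each $f(\gamma) = 1$, and $f(b) \neq 1$ forces each $f(\Box^{\leq m}\delta) \neq 1$, hence $f(\delta) \neq 1$ for each $\delta \in \Delta$. Applying \Cref{3: Thm sc rule char} to $\C$ then gives $\A \inj_D \C$, as desired.

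The main obstacle will be the bookkeeping around $\Box^{\leq m}$ in the left-to-right direction: I must confirm that $f(\Box^{\leq m}\bigwedge\Gamma) = 1$ genuinely yields $f(\gamma) = 1$ for every individual $\gamma \in \Gamma$ (this uses that $\Box^{\leq m} c \leq c$ and the homomorphism preserves the Boolean and modal structure), and symmetrically that $f(b) \neq 1$ implies each $f(\Box^{\leq m}\delta) \neq 1$ and therefore $f(\delta) \neq 1$. The pre-transitivity axiom enters precisely here, guaranteeing via \Cref{3: Lem si auxiliary} that the quotient can be taken s.i.~while preserving the relevant $\Box^{\leq m}$-values, so that $\Box^{\leq m}$ acts as a master modality making the formula and the rule interderivable over $\Kff{m+1}{1}$.
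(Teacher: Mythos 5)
Your proposal is correct and takes essentially the same route as the paper's proof: the left-to-right direction is exactly the paper's application of \Cref{3: Lem si auxiliary} to $V(\bigwedge\Gamma)$ and $V(\bigvee\{\Box^{\leq m}\delta : \delta \in \Delta\})$, the right-to-left direction is the paper's opremum argument, and your appeal to \Cref{3: Thm sc rule char} is just a modular repackaging of the paper's inline verification that $h(a) = V_C(p_a)$ is a stable embedding satisfying CDC for $D$. Two details you glossed but which do not break the argument: refuting $\gamma^m(\A,D)$ on $\B$ only yields a valuation with $V(\Box^{\leq m}\bigwedge\Gamma) \not\leq V(\bigvee\{\Box^{\leq m}\delta : \delta \in \Delta\})$, not one with antecedent equal to $1$ (your own parenthetical already corrects this slip), and in the right-to-left direction the opremum of the s.i.~algebra only bounds $\Box^{\leq n}V(\delta)$ for some possibly large $n$, so pre-transitivity (which gives $\Box^{\leq m}a \leq \Box^{\leq n}a$ in $\Kff{m+1}{1}$-algebras) is needed at that step as well, not only inside \Cref{3: Lem si auxiliary}.
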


\begin{proof}
    Suppose that there is a s.i.~homomorphic image $\C$ of $\B$ and a stable embedding $h: \A \inj_D \C$. Define a valuation $V_A$ on $\A$ by $V_A(p_a) = a$. It follows from the definition of $\Gamma$ and $\Delta$ (\Cref{3: Def scr}) that $V_A(\gamma) = 1$ for all $\gamma \in \Gamma$ and $V_A(\delta) \neq 1$ for all $\delta \in \Delta$. Thus, $V_A(\Boxx{\leq m} \Land \Gamma) = 1$ and $V_A(\Boxx{\leq m} \delta) = \Boxx{\leq m} V_A(\delta) \neq 1$ for all $\delta \in \Delta$. Since $\A$ is s.i., $\A$ has an opremum $c$ by \Cref{2: Prop opremum}. So, for each $\delta \in \Delta$, there is $n \in \omega$ such that $\Box^n (\Boxx{\leq m} V_A(\delta)) \leq c$. Since $\A \models \Kff{m+1}{1}$, for any $a \in \A$ and $k \in \omega$, $\Box^{k'} a \leq \Box^{k} a$ for some $0 \leq k' \leq m$. Thus, we have $\Boxx{\leq m} V_A(\delta) \leq \Box^n (\Boxx{\leq m} V_A(\delta)) \leq c$. Hence, $\Lor \Boxx{\leq m} V_A(\delta) \leq c$, which implies $\A \not\models \gamma^m(\A, D)$. Next, define a valuation $V_C$ on $\C$ by $V_C(p_a) = h(V_A(p_a))$. The same argument as in the proof of \Cref{3: Thm scr complete} shows that $V_C(\gamma) = 1$ for all $\gamma \in \Gamma$ and $V_C(\delta) \neq 1$ for all $\delta \in \Delta$. Thus, $V_C(\Boxx{\leq m} \Land \Gamma) = 1$ and $V_C(\Boxx{\leq m} \delta) = \Boxx{\leq m} V_C(\delta) \neq 1$ for all $\delta \in \Delta$. Since $\C$ is a homomorphic image of $\B$ and $\B \models \Kff{m+1}{1}$, $\C \models \Kff{m+1}{1}$ as well. Since $\C$ is also s.i., applying the same argument as for $\A$, we obtain that $V_C(\Boxx{\leq m} \Land \Gamma) = 1$ and $\Lor \Boxx{\leq m} V_C(\delta)$ is below or equal to the opremum of $\C$, so $\C \not\models \gamma^m(\A, D)$. It follows that $\B \not\models \gamma^m(\A, D)$ since $\C$ is a homomorphic image of $\B$.

    Conversely, suppose that $\B \not\models \gamma^m(\A, D)$. Then there is a valuation $V_B$ on $\B$ such that $\Boxx{\leq m} V_B(\Land \Gamma) \not\leq V_B(\Lor \Boxx{\leq m} \delta)$. By \Cref{3: Lem si auxiliary}, there is a s.i.~homomorphic image $\C$ of $\B$ and a valuation $V_C$ on $\C$ such that $V_C(\Boxx{\leq m} \Land \Gamma) = 1$ and $V_C(\Lor \Boxx{\leq m} \delta) \neq 1$. Define a map $h: \A \to \C$ by $h(a) = V_C(p_a)$. Unfolding the definition of $\Gamma$ and $\Delta$ (\Cref{3: Def scr}), it is straightforward to verify that $h$ is a stable embedding satisfying CDC for $D$. 
\end{proof}

Combining \Cref{3: Lem K4m1 refutation pattern} and \Cref{3: Thm scf char}, we obtain the following theorem, which is a version of \Cref{3: Thm scr complete} for stable canonical formulas for pre-transitive logics.

\begin{theorem} \label{3: Thm scf complete}
    For any formula $\phi$, there exist stable canonical formulas $\gamma^m(\A_1, D_1), \dots$, $\gamma^m(\A_n, D_n)$ where each $\A_i$ is a finite s.i.~$\Kff{m+1}{1}$-algebra and $D_i \subseteq A_i$, such that for any s.i.~modal algebra $\B$, 
    \[\B \models \phi \text{ iff } \B \models \Land \{\gamma^m(\A_i, D_i): 1 \leq i \leq n\}.\]
\end{theorem}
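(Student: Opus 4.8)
The plan is to take the contrapositive of both auxiliary results and chain them. Given $\phi$, I would apply \Cref{3: Lem K4m1 refutation pattern} to obtain the finitely many pairs $(\A_1, D_1), \dots, (\A_n, D_n)$, each $\A_i$ a finite s.i.~$\Kff{m+1}{1}$-algebra and $D_i \subseteq A_i$, and form from them the stable canonical formulas $\gamma^m(\A_i, D_i)$ exactly as in \Cref{3: Def scf for pretran}. If $\phi \in \Kff{m+1}{1}$ then $n = 0$, the conjunction $\Land\{\gamma^m(\A_i,D_i): 1 \le i \le n\}$ is the empty conjunction $\top$, and every relevant $\B$ validates both $\phi$ and $\top$; so I may assume $\phi \notin \Kff{m+1}{1}$ and that $n \ge 1$.

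Next I would rewrite the claimed equivalence in refuted form: $\B \models \phi$ iff $\B \models \Land_i \gamma^m(\A_i, D_i)$ is equivalent to saying $\B \not\models \phi$ iff $\B \not\models \gamma^m(\A_i, D_i)$ for some $1 \le i \le n$. Then I would compose the two characterizations. By \Cref{3: Lem K4m1 refutation pattern}, for a s.i.~algebra $\B$ the statement $\B \not\models \phi$ is equivalent to its clause (3): there exist some $1 \le i \le n$ and a s.i.~homomorphic image $\C$ of $\B$ admitting a stable embedding $\A_i \inj_{D_i} \C$. Since $n$ is fixed and finite, I can pull the existential over $i$ to the front, so clause (3) reads: there is some $i$ for which $\B$ has a s.i.~homomorphic image $\C$ with $\A_i \inj_{D_i} \C$. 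By \Cref{3: Thm scf char} applied with $\A = \A_i$ and $D = D_i$, the inner statement is precisely $\B \not\models \gamma^m(\A_i, D_i)$. Chaining these biconditionals gives $\B \not\models \phi$ iff $\B \not\models \gamma^m(\A_i, D_i)$ for some $i$, which is exactly the desired conclusion.

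The two results dovetail because they are organized around the same semantic invariant, namely the existence of a stable embedding $\A_i \inj_{D_i} \C$ into a s.i.~homomorphic image $\C$ of $\B$; clause (3) of \Cref{3: Lem K4m1 refutation pattern} isolates this invariant on the $\phi$-side, and \Cref{3: Thm scf char} recognizes it on the $\gamma^m$-side. I do not expect a genuine obstacle: once those two lemmas are in hand, the theorem is a formal composition of biconditionals with one harmless quantifier rearrangement, the substantive work having been front-loaded into building the refutation patterns via Gabbay's definable filtration and into verifying that $\gamma^m(\A,D)$ detects stable embeddings into s.i.~quotients. The one point I would check carefully is the scope of the algebras, since \Cref{3: Thm scf char} is phrased for $\Kff{m+1}{1}$-algebras: the clean chaining is guaranteed when $\B$ ranges over s.i.~$\Kff{m+1}{1}$-algebras, which is the setting in which clause (3) of \Cref{3: Lem K4m1 refutation pattern} is established, and I would make the quantifier manipulation and the $n=0$ edge case explicit to keep the argument airtight.
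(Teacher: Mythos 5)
Your proposal is correct and is exactly the paper's own argument: the paper proves this theorem by simply combining \Cref{3: Lem K4m1 refutation pattern} with \Cref{3: Thm scf char}, which is the chaining of biconditionals you describe (with the $n=0$ case and the quantifier rearrangement left implicit). Your caveat about restricting $\B$ to s.i.~$\Kff{m+1}{1}$-algebras is a fair observation, since that is indeed the setting in which both auxiliary results are actually established and the only setting in which the theorem is later used.
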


\begin{proof}
    This follows directly from \Cref{3: Lem K4m1 refutation pattern} and \Cref{3: Thm scf char}.
\end{proof}

Now we arrive at the axiomatization result for logics above $\Kff{m+1}{1}$, generalizing the result for logics above $\Kf$ (\cite[Theorem 6.10]{stablecanonicalrules}).

\begin{theorem} \label{3: Thm scf complete 2}
    Let $m \geq 1$. Every logic $L \supseteq \Kff{m+1}{1}$ is axiomatizable over $\Kff{m+1}{1}$ by stable canonical formulas. Moreover, if $L$ is finitely axiomatizable over $\Kff{m+1}{1}$, then $L$ is axiomatizable over $\Kff{m+1}{1}$ by finitely many stable canonical formulas.
\end{theorem}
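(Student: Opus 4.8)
The plan is to mirror the argument for \Cref{3: Cor scr complete}, using \Cref{3: Thm scf complete} in place of \Cref{3: Thm scr complete}, but with one essential adjustment to account for the fact that stable canonical formulas are characterized only on subdirectly irreducible algebras. First I would write $L = \Kff{m+1}{1} + \{\phi_i : i \in I\}$ for some family of formulas $\{\phi_i : i \in I\}$. Applying \Cref{3: Thm scf complete} to each $\phi_i$ yields finitely many stable canonical formulas $\gamma^m(\A_{ij}, D_{ij})$ ($1 \leq j \leq n_i$), each built from a finite s.i.~$\Kff{m+1}{1}$-algebra, such that for every s.i.~modal algebra $\B$ we have $\B \models \phi_i$ iff $\B \models \bigwedge_j \gamma^m(\A_{ij}, D_{ij})$. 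I then set $L' = \Kff{m+1}{1} + \{\gamma^m(\A_{ij}, D_{ij}) : i \in I,\ 1 \leq j \leq n_i\}$ and claim $L = L'$.

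The decisive step is that one cannot compare $L$ and $L'$ directly by validity on all modal algebras, since \Cref{3: Thm scf complete} only provides the semantic equivalence on s.i.~algebras. Instead I would invoke the fact that every variety is generated by its subdirectly irreducible members (Birkhoff's subdirect representation theorem), so that two varieties coincide as soon as they have the same s.i.~algebras. As $L$ and $L'$ are logics, it suffices to show $\V(L) = \V(L')$, and since both are varieties of $\Kff{m+1}{1}$-algebras this reduces to comparing their s.i.~members. For a s.i.~$\Kff{m+1}{1}$-algebra $\B$, the chain of equivalences $\B \models L \iff \B \models \phi_i$ for all $i \iff \B \models \gamma^m(\A_{ij}, D_{ij})$ for all $i, j \iff \B \models L'$ delivers exactly this. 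The ``moreover'' clause is then immediate: if $L$ is finitely axiomatizable over $\Kff{m+1}{1}$ we may take $I$ finite, and since each $\phi_i$ contributes only finitely many stable canonical formulas, the resulting axiomatization is finite.

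I expect the main subtlety to lie in this reduction to s.i.~algebras rather than in any computation. One must be careful that it is the \emph{generating} role of s.i.~algebras, not validity on all algebras, that licenses passing from the s.i.-only equivalence of \Cref{3: Thm scf complete} to an equality of logics. This is precisely the point at which the formula version genuinely diverges from the rule version of \Cref{3: Cor scr complete}: in the rule setting the semantic equivalence holds over all $\S$-algebras and the passage is direct, whereas here the restriction to s.i.~algebras is unavoidable and must be compensated by the Birkhoff reduction.
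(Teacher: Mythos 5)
Your proposal is correct and matches the paper's proof essentially step for step: the paper also decomposes $L = \Kff{m+1}{1} + \{\phi_i : i \in I\}$, applies \Cref{3: Thm scf complete} to each $\phi_i$, and concludes with the observation that ``every logic is determined by the class of its s.i.~modal algebras,'' which is precisely the Birkhoff-style reduction you spell out. Your only departure is making that final step more explicit, which is a faithful elaboration rather than a different route.
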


\begin{proof}
    Let $L$ be a logic containing $\Kff{m+1}{1}$. Then $L = \Kff{m+1}{1} + \{\phi_i: i \in I\}$ for a set $\{\phi_i: i \in I\}$ of formulas. By \Cref{3: Thm scf complete}, for each formula $\phi_i$, there exists a finite set of stable canonical formulas $\{\gamma^m(\A_{ij}, D_{ij}): 1 \leq j \leq n_i\}$ such that for any s.i.~$\Kff{m+1}{1}$-algebra $\B$, $\B \models \phi$ iff $\B \models \{\gamma^m(\A_{ij}, D_{ij}): 1 \leq j \leq n_i\}$. So, for any s.i.~$\Kff{m+1}{1}$-algebra $\B$, $\B \models L$ iff $\B \models \gamma^m(\A_{ij}, D_{ij})$ for all $i \in I$ and $1 \leq j \leq n_i$. Since every logic is determined by the class of its s.i.~modal algebras, it follows that $L = \Kff{m+1}{1} + \{\gamma^m(\A_{ij}, D_{ij}): i \in I, 1 \leq j \leq n_i\}$. Moreover, if $L$ is finitely axiomatizable over $\Kff{m+1}{1}$, then we can choose $I$ to be finite, hence the set $\{\gamma^m(\A_{ij}, D_{ij}): i \in I, 1 \leq j \leq n_i\}$ is also finite.  
\end{proof}

\begin{remark} \label{3: Rem before jankov fml}
    We have a computability result similar to \Cref{3: Rem scr computable}. Since the logic $\Kff{m+1}{1}$ is finitely axiomatizable and has the fmp (\Cref{3: Cor K4m1 fmp}), it is decidable. Thus, the enumeration in the proof of \Cref{3: Lem K4m1 refutation pattern} is computable. Therefore, the result of \Cref{3: Thm scf complete 2} is computable; that is, there is an algorithm that, given a finite axiomatization of $L$ over $\Kff{m+1}{1}$, computes a finite set of stable canonical formulas that axiomatize $L$ over $\Kff{m+1}{1}$. 
\end{remark}

Finally, we discuss two extreme types of stable canonical formulas, namely, stable canonical formulas $\gamma^m(\A, D)$ with $D = \emp$ and those with $D = A$. 

\begin{definition} \label{3: Def stable and Jankov formulas}
    Let $\A$ be a finite modal algebra.
    \begin{enumerate}
        \item A stable canonical formula $\gamma^m(\A, \emp)$ is called a \emph{stable formula}.
        \item A stable canonical formula $\gamma^m(\A, A)$ is called a \emph{Jankov formula}.
    \end{enumerate}
\end{definition}

It follows immediately from \Cref{3: Thm scf char} that for any $\Kff{m+1}{1}$-algebra $\B$,
\[\B \not\models \gamma^m(\A, \emp) \text{ iff $\A$ is a stable subalgebra of a s.i.~homomorphic image of $\B$}\]
and
\[\B \not\models \gamma^m(\A, A) \text{ iff $\A$ is a subalgebra of a s.i.~homomorphic image of $\B$.}\]
The terminology is motivated by the fact that this semantic characterization coincides with the original \emph{Jankov formula} \cite{jankov1963} (see also \cite{dejongh1968}) for Heyting algebras. It turns out that stable formulas and Jankov formulas axiomatize logics with special features.

\begin{definition} \label{3: Def M-stable logic} 
    Let $\logic{M}$ be a logic.
    \begin{enumerate}
        \item A class $\class{K}$ of $\logic{M}$-agebras is \emph{$\logic{M}$-stable} if for any $\logic{M}$-algebra $\A$ and any $\B \in \class{K}$, if $\A \inj_\emp \B$, then $\A \in \class{K}$. 
        \item A logic $L \supseteq \logic{M}$ is \emph{$\logic{M}$-stable} if $\V(L)$ is generated by an $\logic{M}$-stable class. 
    \end{enumerate}
\end{definition}

The notion of $\logic{M}$-stable logics is introduced and studied in \cite{stablemodallogic} (see also \cite{FiltrationRevisitedLattices2018} for a comprehensive account). It was shown that each $\Kf$-stable logic is axiomatized by stable formulas over $\Kf$. This result readily generalizes to pre-transitive logics $\Kff{m+1}{1}$. We only sketch the proof as it is very similar to the one for $\Kf$.

\begin{proposition} \label{4: Prop finitely K4m1 stable}
    Let $m \geq 1$. Every $\Kff{m+1}{1}$-stable logic is axiomatizable by stable formulas over $\Kff{m+1}{1}$. 
\end{proposition}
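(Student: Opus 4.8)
The plan is to follow the $\Kf$ argument of \cite{stablemodallogic}. Fix a $\Kff{m+1}{1}$-stable logic $L$ and, by \Cref{3: Def M-stable logic}, a $\Kff{m+1}{1}$-stable class $\class{K}$ with $\V(L) = \V(\class{K})$. I would prove that $L$ is axiomatized over $\Kff{m+1}{1}$ by the (possibly infinite) set
\[
\Sigma = \{\gamma^m(\A, \emp) : \A \text{ a finite s.i.\ } \Kff{m+1}{1}\text{-algebra with } \A \notin \V(L)\},
\]
that is, that $L = L'$ for $L' = \Kff{m+1}{1} + \Sigma$.

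For $L \subseteq L'$ it suffices, since logics are determined by their s.i.\ algebras, to show that every s.i.\ $\Kff{m+1}{1}$-algebra $\B$ refuting $L$ also refutes some member of $\Sigma$. If $\B \not\models \phi$ with $\phi \in L$, then \Cref{3: Lem K4m1 refutation pattern} supplies a finite s.i.\ $\Kff{m+1}{1}$-algebra $\A_i$ with $\A_i \not\models \phi$ (hence $\A_i \notin \V(L)$, so $\gamma^m(\A_i,\emp) \in \Sigma$) together with a stable embedding $\A_i \inj_{D_i} \C$ into a s.i.\ homomorphic image $\C$ of $\B$. As any stable embedding satisfying the CDC for $D_i$ is in particular stable with $D = \emp$, we get $\A_i \inj_\emp \C$, and \Cref{3: Thm scf char} yields $\B \not\models \gamma^m(\A_i, \emp)$. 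Thus $\B \not\models L'$.

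The substantial direction is $L' \subseteq L$, i.e.\ that each $\gamma^m(\A,\emp) \in \Sigma$ is valid on $\V(L)$. By \Cref{3: Thm scf char} a failure would produce an s.i.\ algebra $\C \in \V(L)$ (a homomorphic image of some $\B \in \V(L)$, hence in $\V(L)$) with $\A \inj_\emp \C$ for a finite s.i.\ $\A \notin \V(L)$. So the whole direction reduces to the key closure property
\[
(\star)\qquad \A \text{ finite s.i.}, \ \A \inj_\emp \C, \ \C \in \V(L) \text{ s.i.} \ \Longrightarrow\ \A \in \V(L),
\]
which is exactly where $\Kff{m+1}{1}$-stability of $\class{K}$ is used. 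Since modal algebras form a congruence-distributive variety, Jónsson's lemma places the s.i.\ algebra $\C$ in $\HSP_U(\class{K})$: $\C$ is a homomorphic image of a subalgebra $\D$ of an ultraproduct $E = \prod_U \B_j$ with $\B_j \in \class{K}$. I would then transport the finite stable embedding $\A \inj_\emp \C$ back along this chain. The steps through $\mathsf{S}$ and $\mathsf{P}_U$ are routine: a modal embedding is stable, so composing with $\D \inj E$ gives $\A \inj_\emp E$, and since $\A$ is finite the existence of a stable embedding of $\A$ transfers by a {\L}o\'s-style argument to $\A \inj_\emp \B_j$ for $U$-many $j$; stability of $\class{K}$ (closure under stable subalgebras, \Cref{3: Def M-stable logic}) then gives $\A \in \class{K} \subseteq \V(L)$, with \Cref{3: Lem si reflect stable subalg} ensuring the intermediate stable subalgebras remain s.i.

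The main obstacle is the remaining $\mathsf{H}$-step: lifting the stable embedding $\A \inj_\emp \C$ through the surjective modal homomorphism $\D \twoheadrightarrow \C$. Because a stable embedding is only Boolean (it satisfies merely $\Dia h(a) \le h(\Dia a)$, not equality), it does not in general lift along a surjection, and reconciling this with the modal class operators is the technical heart of $(\star)$ — precisely the point where the hypothesis of $\Kff{m+1}{1}$-stability, rather than bare membership in $\V(L)$, becomes indispensable. Once $(\star)$ is secured, the pre-transitive features enter only through the master modality $\Box^{\leq m}$ and the opremum argument already carried out in \Cref{3: Thm scf char}, so the remaining bookkeeping is identical to the $\Kf$ case treated in \cite{stablemodallogic}.
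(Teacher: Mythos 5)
Your reduction of the proposition to the closure property $(\star)$, and your treatment of the direction $L \subseteq L'$ via \Cref{3: Lem K4m1 refutation pattern} and \Cref{3: Thm scf char}, are both correct. But your proof of $(\star)$ --- which, as you say yourself, is where the stability hypothesis must do its work --- is missing, not merely deferred: the Jónsson's-lemma route stalls at the $\mathsf{H}$-step, and stability of $\class{K}$ in the sense of \Cref{3: Def M-stable logic} gives no purchase on that step. Closure of $\class{K}$ under stable subalgebras says nothing about lifting a stable embedding $h: \A \inj_\emp \C$ along a modal surjection $f: \D \surj \C$: since $\A$ is finite, a Boolean lift $g$ with $f \circ g = h$ exists (finite Boolean algebras are projective), but the stability inequality $\Dia g(a) \leq g(\Dia a)$ is only guaranteed modulo the kernel of $f$, and nothing forces it to hold in $\D$. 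So the proposal stops exactly at the point where the hypothesis of the proposition would have to be used. A telling symptom that the route is off: your argument for the hard direction never invokes definable filtration (\Cref{3: Thm K4m1 admits filtration}), yet that is precisely the property of $\Kff{m+1}{1}$ on which the proposition rests.

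The paper's proof is organized so that this lifting problem never arises, in two moves. First, following \cite[Theorem 3.8]{stablemodallogic}, definable filtration is used to replace $\class{K}$ by a $\Kff{m+1}{1}$-stable class of \emph{finite} $\Kff{m+1}{1}$-algebras generating $\V(L)$; consequently an axiom $\gamma^m(\A, \emp) \notin L$ would be refuted on some member $\B$ of $\class{K}$ itself (as $L = \Log(\class{K})$), so Jónsson's lemma, ultraproducts, and your $\mathsf{H}$-step are never needed. Second, the remaining mismatch --- \Cref{3: Thm scf char} produces $\A \inj_\emp \C$ into an s.i.\ homomorphic image $\C$ of $\B$, not into $\B$ itself --- is resolved not by lifting but by the root-adding trick of \cite[Lemma 4.5]{stablemodallogic}. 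Dually, $\C$ corresponds to a generated subframe $\Z$ of the finite dual $\Y$ of $\B$; one extends the stable surjection $\Z \surj_\emp \F$ (with $\F$ dual to $\A$) to a stable surjection $\Y \surj_\emp \F_r$, where $\F_r$ adds to $\F$ a new root seeing every point and itself, by sending all of $Y \setminus Z$ to the new root. This is exactly why the paper verifies that $\F_r$ is again a $\Kff{m+1}{1}$-space (the analog of \cite[Lemma 4.5]{stablemodallogic}): one then gets $(\F_r)^* \inj_\emp \B \in \class{K}$, hence $(\F_r)^* \in \class{K} \subseteq \V(L)$ by stability, and since $\F$ is a generated subframe of $\F_r$, $\A$ is a homomorphic image of $(\F_r)^*$, whence $\A \in \V(L)$ --- the desired contradiction. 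Keeping everything finite is essential here: on infinite duals the subspace corresponding to a homomorphic image is closed but in general not clopen, so the ``send the complement to the root'' map need not be continuous; this is why your route through subalgebras of ultraproducts cannot be repaired by the same device.
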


\begin{proof}[Proof Sketch]
    Since $\Kff{m+1}{1}$ admits definable filtration, following the proof of \cite[Theorem 3.8]{stablemodallogic}, we can show that $\V(L)$ is generated by a $\Kff{m+1}{1}$-stable class $\class{K}$ of finite $\Kff{m+1}{1}$-algebras. Also, \cite[Lemma 4.5]{stablemodallogic} holds for $\Kff{m+1}{1}$ because for any finite $\Kff{m+1}{1}$-space $\F$, adding an extra root to $\F$ so that it sees itself, as well as every point in $\F$, results in another $\Kff{m+1}{1}$-space. Thus, following the proof of  \cite[Theorem 4.7]{stablemodallogic} and using \Cref{3: Lem si reflect stable subalg}, we obtain the statement. 
\end{proof}

Note that the converse does not hold. A logic axiomatized by a stable formula over $\Kf$ that is not $\Kf$-stable can be found in \cite[Example 4.11]{stablemodallogic}.

Moreover, the fmp result regarding $\logic{M}$-stable logics \cite[Proposition 3.3]{stablemodallogic} generalizes as follows.

\begin{theorem} \label{3: Thm M-stable fmp}
    Let $\logic{M}$ be a logic that admits definable filtration. Then, every $\logic{M}$-stable logic has the fmp. 
\end{theorem}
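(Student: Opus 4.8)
The plan is to combine the two ingredients already in place: that $\V(L)$ is generated by an $\logic{M}$-stable class, and that $\logic{M}$ admits definable filtration. Let $L$ be an $\logic{M}$-stable logic, so by \Cref{3: Def M-stable logic} there is an $\logic{M}$-stable class $\class{K}$ with $\V(L) = \V(\class{K})$. Suppose $\phi \notin L$; the goal is to manufacture a \emph{finite} $L$-algebra refuting $\phi$. First I would note that $\V(L) = \V(\class{K}) = \HSP(\class{K})$, and since validity of a formula is preserved by $\H$, $\S$, and $\P$, the formula $\phi$ must already fail on some $\B \in \class{K}$, witnessed by a valuation $V$.

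Next I would filter this witness down. Setting $\Theta = \Sub(\phi)$ and invoking that $\logic{M}$ admits definable filtration (\Cref{3: Def admit filtration}), there is a finite subformula-closed $\Theta' \supseteq \Theta$ and a definable filtration $(\B', V')$ of $(\B, V)$ for $\Theta$ through $\Theta'$ with $\B' \models \logic{M}$. The algebra $\B'$ is finite because its underlying Boolean algebra is generated by the finite set $V[\Theta']$. By the Filtration Lemma (\Cref{3: Lem filtration}), $V'(\phi) = V(\phi) \neq 1$, so $\B' \not\models \phi$.

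The crucial move is to upgrade $\B' \models \logic{M}$ to $\B' \models L$. By \Cref{3: Def alg filtration}, the inclusion $\B' \inj \B$ is a stable homomorphism, so in particular $\B' \inj_\emp \B$ (the closed domain condition for $\emp$ is vacuous). Since $\class{K}$ is $\logic{M}$-stable, $\B \in \class{K}$, and $\B'$ is an $\logic{M}$-algebra stably embedding into $\B$, the defining closure property in \Cref{3: Def M-stable logic} yields $\B' \in \class{K} \subseteq \V(\class{K}) = \V(L)$. Hence $\B'$ is a finite $L$-algebra refuting $\phi$, which establishes the fmp.

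I expect the genuinely nontrivial point to be exactly this last step: ensuring the filtrated algebra is an $L$-algebra rather than merely an $\logic{M}$-algebra. This is precisely where the hypothesis that $\class{K}$ is $\logic{M}$-stable is indispensable, combined with the fact that every definable filtration realizes the filtrated algebra as a stable subalgebra of the original. The rest is a direct chase through the definitions, closely paralleling \Cref{3: Prop filtration implies fmp} but with the extra return-to-$\class{K}$ step supplied by stability.
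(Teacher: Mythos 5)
Your proposal is correct, and the core mechanism is the same as the paper's: filter a refuting algebra through the definable filtration supplied by $\logic{M}$, observe the filtrated algebra is finite and still refutes $\phi$ by \Cref{3: Lem filtration}, and use stability of the embedding $\inj_\emp$ to get back into $\V(L)$. The one place you diverge is where stability is invoked, and your version is actually the tighter one. The paper starts from an \emph{arbitrary} $L$-algebra $\A$ refuting $\phi$ and concludes $\A' \in \V(L)$ ``by $L$ being $\logic{M}$-stable''; read literally against \Cref{3: Def M-stable logic}, this treats $\V(L)$ itself as an $\logic{M}$-stable class, whereas the definition only says $\V(L)$ is \emph{generated} by some $\logic{M}$-stable class $\class{K}$ (that $\V(L)$ may itself be taken stable is a separate fact, the generalization of a characterization from the stable-logics literature that the paper only gestures at in a later remark). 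You instead pull the refutation back into the generating class: since $\V(L) = \HSP(\class{K})$ and formula validity is preserved under homomorphic images, subalgebras, and products, $\phi \notin L$ already fails on some $\B \in \class{K}$; filtering \emph{that} witness and applying the closure property of $\class{K}$ verbatim gives $\B' \in \class{K} \subseteq \V(L)$. So your proof needs one extra (standard) observation about $\HSP$ but requires nothing beyond the paper's stated definitions, and it fills in precisely the step the paper's wording leaves implicit.
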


\begin{proof}
    Let $L$ be an $\logic{M}$-stable logic, that is, $\V(L)$ is generated by an $\logic{M}$-stable class $\class{K}$. Then, for any $\phi \notin L$, there is some algebra $\A \in \class{K}$ such that $\A \not\models \phi$, witnessed by some valuation $V$ on $\A$. Note that $\A$ is an $L$-algebra as $\class{K} \subseteq \V(L)$, and thus an $\logic{M}$-algebra since $\logic{M} \subseteq L$. By the assumption that $\logic{M}$ admits definable filtration, there is a finite subformula-closed set $\Theta'$ containing $\Sub(\phi)$ and a definable filtration $(\A', V')$ of $(\A, V)$ for $\Sub(\phi)$ through $\Theta'$ such that $\A' \in \V(\logic{M})$. Since $\A' \inj_\emp \A$ by the definition of definable filtrations and $\A \in \class{K}$, we have $\A' \in \class{K}$ by $\class{K}$ being $\logic{M}$-stable, and $\A' \in \V(L)$ since $\class{K} \subseteq \V(L)$. Thus, $\A'$ is a finite $L$-algebra that refutes $\phi$. Hence, $L$ has the fmp.
\end{proof}

\begin{corollary} \label{3: Cor K4m1-stable fmp}
    For $m \geq 1$, every $\Kff{m+1}{1}$-stable logic has the fmp.
\end{corollary}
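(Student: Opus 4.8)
The final statement, \Cref{3: Cor K4m1-stable fmp}, asserts that every $\Kff{m+1}{1}$-stable logic has the fmp. This is now an immediate consequence of two results established earlier in the excerpt, so the proof is a one-line application of them.

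The plan is to invoke \Cref{3: Thm M-stable fmp}, which states that if a logic $\logic{M}$ admits definable filtration, then every $\logic{M}$-stable logic has the fmp. To apply this with $\logic{M} = \Kff{m+1}{1}$, I first need to know that $\Kff{m+1}{1}$ itself admits definable filtration. This is exactly the content of \Cref{3: Thm K4m1 admits filtration}, which was proved via Gabbay's filtration construction in \Cref{3: Lem K4m1 filtration}. Thus the entire argument consists of chaining these two facts together: since $\Kff{m+1}{1}$ admits definable filtration by \Cref{3: Thm K4m1 admits filtration}, \Cref{3: Thm M-stable fmp} applies and yields the fmp for every $\Kff{m+1}{1}$-stable logic.

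There is essentially no obstacle here, since all the substantive work has already been done upstream. The only thing to verify is that the hypotheses of \Cref{3: Thm M-stable fmp} are met, namely that $\Kff{m+1}{1}$ is a logic admitting definable filtration for each fixed $m \geq 1$, which is precisely \Cref{3: Thm K4m1 admits filtration}. So my proof would simply read: Fix $m \geq 1$. By \Cref{3: Thm K4m1 admits filtration}, the logic $\Kff{m+1}{1}$ admits definable filtration. Hence, by \Cref{3: Thm M-stable fmp} applied with $\logic{M} = \Kff{m+1}{1}$, every $\Kff{m+1}{1}$-stable logic has the fmp.

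If I wanted to be fully self-contained rather than citing \Cref{3: Thm M-stable fmp} as a black box, I would unfold its proof in this special case: given $\phi \notin L$ for a $\Kff{m+1}{1}$-stable logic $L$, take a refuting $L$-algebra $\A$ with valuation $V$; since $\Kff{m+1}{1} \subseteq L$, the algebra $\A$ is a $\Kff{m+1}{1}$-algebra, so by the definable filtration for $\Kff{m+1}{1}$ there is a finite filtration $(\A', V')$ with $\A' \models \Kff{m+1}{1}$ and $\A' \inj_\emp \A$; stability of $L$ then forces $\A' \in \V(L)$, and the Filtration Lemma (\Cref{3: Lem filtration}) guarantees $\A'$ still refutes $\phi$. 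But since \Cref{3: Thm M-stable fmp} is already available in full generality, the cleaner route is the direct two-step citation above.
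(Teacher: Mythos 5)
Your proof is correct and matches the paper's own proof exactly: the paper likewise derives the corollary by combining \Cref{3: Thm K4m1 admits filtration} with \Cref{3: Thm M-stable fmp}. Your optional unfolding of the latter theorem also faithfully reproduces the paper's proof of that result, so there is nothing to add.
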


\begin{proof}
    This follows from \Cref{3: Thm K4m1 admits filtration} and \Cref{3: Thm M-stable fmp}.
\end{proof}

\begin{remark}
    By a similar argument as the proof of \Cref{3: Thm M-stable fmp}, one can also generalize the characterization results in \cite{stablemodallogic} (e.g., Theorem 3.8 and Theorem 4.7), replacing filtration with definable filtration and $\Kf$ with $\Kff{m+1}{1}$. These characterizations and \Cref{4: Prop finitely K4m1 stable} are used in \cite{takahashi2025choppingfinelyfinitecountermodels} (see also \cite[Chapter 4]{TakahashiThesis}) to obtain a general fmp result, including the fmp of all $\Kff{m+1}{1}$-stable logics as stated in \Cref{3: Cor K4m1-stable fmp}.
\end{remark}

As we will see below, $\Kff{m+1}{1}$-stable logics form large classes of logics with the fmp. More precisely, we show that, for each $m \geq 2$, there are continuum many $\Kff{m+1}{1}$-stable logics that do not contain $\Kff{m'+1}{1}$ for all $1 \leq m' < m$. So, in particular, these $\Kff{m+1}{1}$-stable logics are not $\Kff{m'+1}{1}$-stable for all $1 \leq m' < m$. Note that \cite[Theorem 5.6]{stablemodallogic} proved that there are continuum many $\logic{S4}$-stable logics, and continuum many $\Kf$-stable logics between $\Kf$ and $\logic{S4}$.

Subframe logics were introduced by Fine \cite{fineK4II} for transitive logics, and their fmp has been extensively studied (see, e.g., \cite[Section 11.3]{czModalLogic1997}). A \emph{subframe} $S$ of a modal space $\X$ is a clopen subset $S \subseteq X$ with the restricted topology and relation. A logic is a \emph{subframe logic} if its class of modal spaces is closed under subframes. It follows from \cite[Theorem 39]{TwoTypesFiltrations2025} that every subframe canonical extension of $\Kff{m+1}{1}$ has the fmp for $m \geq 1$. However, the $\Kff{m+1}{1}$-stable logics we present below are not subframe logics, and thus do not fall in the scope of this result.

We need an auxiliary lemma, which states a partial converse of \Cref{4: Prop finitely K4m1 stable}. For a finite rooted $\Kff{m+1}{1}$-space $\F$, let us write $\gamma^m(\F, \emp)$ for $\gamma^m(\F^*, \emp)$ for simplicity. 

\begin{lemma} \label{Lem K4m1 stable ref root}
    Let $L = \Kff{m+1}{1} + \{\gamma^m(\F_i, \emp): i \in I\}$, where each $\F_i$ is a finite rooted $\Kff{m+1}{1}$-space with a reflexive root $r$. Then $L$ is $\Kff{m+1}{1}$-stable.
\end{lemma}

\begin{proof} 
    It suffices to show that the variety $\V(L)$ is generated by the class $\V(L)\fsi$ of finite s.i.~$L$-algebras and that $\V(L)\fsi$ is closed under stable $\Kff{m+1}{1}$-subalgebras. Let $\phi \notin L$. Then there is a s.i.~$L$-algebra $\B$ such that $\B \not\models \phi$. By \Cref{3: Lem K4m1 refutation pattern}, there is a finite s.i.~$\Kff{m+1}{1}$-algebra $\A$ and $D \subseteq A$ such that $\A \not\models \phi$ and $\A \inj_D \B$ (for example, by taking $(\A, D) = (\A_1, D_1)$). Suppose for a contradiction that $\A \not\models L$. Then $\A \not\models \gamma^m(\F_i, \emp)$ for some $i \in I$, which by \Cref{3: Thm scf char} means that $\F_i$ is a stable image of a topo-rooted closed upset $U$ of $\A_*$, the dual space of $\A$. Since $\A_*$ is finite by $\A$ being finite, $U$ is clopen. So, we can extend the stable surjection from $U$ to $\F_i$ to a stable surjection from $\A_*$ to $\F_i$ by sending all points outside $U$ to the reflexive root $r$ of $\F_i$. Thus, $\F_i$ is a stable image of $\A_*$, namely $(\F_i)^* \inj_\emp \A$. By composition with $\A \inj_D \B$, we have $(\F_i)^* \inj_\emp \B$, and hence $\B \not\models \gamma^m(\F_i, \emp)$ by \Cref{3: Thm scf char}, contradicting the assumption that $\B \models L$. Therefore, $\A \models L$, and thus $\V(L)$ is generated by $\V(L)\fsi$. 
    
    Next, we show that $\V(L)\fsi$ is closed under stable subalgebras. Let $\B \in \V(L)\fsi$ and $\A$ be a stable $\Kff{m+1}{1}$-subalgebra of $\B$. By \Cref{3: Lem si reflect stable subalg}, $\A$ is also s.i. The same argument as above shows that $\A \models L$, and thus $\V(L)\fsi$ is closed under stable $\Kff{m+1}{1}$-subalgebras.
\end{proof}

For $m \geq 2$ and $n \in \omega$, let $\F^m$ be the left non-transitive $\Kff{m+1}{1}$-space and $\F_n$ be the right transitive space depicted below. Spaces $\F_n$ were used in \cite{LocallyFiniteReducts2017} to construct continuum many stable superintuitionistic logics and in \cite{stablemodallogic} to construct continuum many $\Kf$-stable logics (see also \cite{FiltrationRevisitedLattices2018}).

\begin{figure}[h]
        \centering
        \begin{tikzpicture}[scale=1]
            \node (x0) at (0,0) {\( \bullet \)};
            \node (xx0) at (0.4,0) {\( x_0 \)};
            \node (x1) at (0,1) {\( \bullet \)};
            \node (xx1) at (0.4,1) {\( x_1 \)};
            \node (x2) at (0,2) {\( \vdots \)};
            \node (xm-1) at (0,3) {\( \bullet \)};
            \node (xxm-1) at (0.6,3) {\( x_{m-1} \)};
            \node (xm) at (0,4) {\( \bullet \)};
            \node (xxm) at (0.4,4) {\( x_m \)};
            \draw[->] (x0) -- (x1);
            \draw[->] (xm-1) -- (xm);
            \node (Fm) at (0,-1) {\( \F^m \)};

            \node (r) at (5,0) {\( \circ \)};
            \node (rr) at (5.4,0) {\( r \)};
            \node (y) at (4,1) {\( \circ \)};
            \node (yy) at (3.6,1) {\( y \)};
            \node (y0) at (4,2) {\( \circ \)};
            \node (yy0) at (3.6,2) {\( y_0 \)};
            \node (z0) at (6,2) {\( \circ \)};
            \node (zz0) at (6.4,2) {\( z_0 \)};
            \node (y1) at (4,3) {\( \circ \)};
            \node (yy1) at (3.6,3) {\( y_1 \)};
            \node (z1) at (6,3) {\( \circ \)};
            \node (zz1) at (6.4,3) {\( z_1 \)};
            \node (y2) at (4,4) {\( \vdots \)};
            \node (z2) at (6,4) {\( \vdots \)};
            \node (yn) at (4,5) {\( \circ \)};
            \node (yyn) at (3.6,5) {\( y_n \)};
            \node (zn) at (6,5) {\( \circ \)};
            \node (zzn) at (6.4,5) {\( z_n \)};
            \node (yn1) at (4,6) {\( \circ \)};
            \node (yyn1) at (3.4,6) {\( y_{n+1} \)};
            \node (zn1) at (6,6) {\( \circ \)};
            \node (zzn1) at (6.6,6) {\( z_{n+1} \)};
            \draw[->] (r) -- (y);
            \draw[->] (y) -- (y0);
            \draw[->] (r) -- (z0);
            \draw[->] (y0) -- (y1);
            \draw[->] (y0) -- (z1);
            \draw[->] (z0) -- (z1);
            \draw[->] (z0) -- (y1);
            \draw[->] (yn) -- (yn1);
            \draw[->] (yn) -- (zn1);
            \draw[->] (zn) -- (zn1);
            \draw[->] (zn) -- (yn1);
            \node (Fn) at (5,-1) {\( \F_n \)};
        \end{tikzpicture}
    \end{figure}

Let $\F^m_n$ be the space obained by putting $\F_n$ below $\F^m$, that is, the disjoint union of $\F^m$ and $\F_n$ with additional relations $(s, t)$ for all $s \in \F_n$ and $t \in \F^m$. It is straightforward to verify that $\F^m_n$ validates $\Kff{m+1}{1}$ but not $\Kff{m'+1}{1}$ for all $1 \leq m' < m$.

\begin{lemma} \label{Lem K4m1 stable fml ex}
    For any $m \geq 2$ and $n \neq n'$, $\F^m_n \models \gamma^m(\F^m_{n'}, \emp)$ and $\F^m_{n'} \models \gamma^m(\F^m_n, \emp)$.
\end{lemma}

\begin{proof}
    Wlog, we may assume $n < n'$. Then, a simple cardinality argument shows that $\F^m_{n'}$ is not a stable image of an upset of $\F^m_n$, so $\F^m_n \models \gamma^m(\F^m_{n'}, \emp)$ by \Cref{3: Thm scf char}. Suppose for a contradiction that there are an upset $U \subseteq \F^m_{n'}$ and a stable map $f: U \surj \F^m_n$. Then, $U$ must contain $\F^m$ and $f$ must map that part to the $\F^m$ part of $\F^m_n$ identically. Since $f$ cannot map a reflexive point to an irreflexive point, it yields a stable surjection $f'$ from $U' = U \cap \F_{n'}$ to $\F_n$. However, this is impossible by the proof of \cite[Lemma 6.12]{LocallyFiniteReducts2017}. Thus, $\F^m_n$ is not a stable image of an upset of $\F^m_{n'}$, so $\F^m_{n'} \models \gamma^m(\F^m_n, \emp)$ by \Cref{3: Thm scf char}.
\end{proof}

\begin{theorem} \label{Thm conti many K4m1 stable}
    For each $m \geq 2$, there are continuum many $\Kff{m+1}{1}$-stable logics that do not contain $\Kff{m'+1}{1}$ for all $1 \leq m' < m$. Moreover, these logics are not subframe logics.
\end{theorem}

\begin{proof}
    For each $m \geq 2$, let $\F^m_n$ be the space defined above for $n \in \omega$. For each $\{0\} \subseteq I \subseteq \omega \setminus\{1\}$, let $L_I = \Kff{m+1}{1} + \{\gamma^m(\F^m_n, \emp): i \in I\}$. Since every $\F^m_n$ has a reflexive root, each $L_I$ is $\Kff{m+1}{1}$-stable by \Cref{Lem K4m1 stable ref root}. If $I \neq I'$, then we may assume wlog that there is some $i \in I \setminus I'$, and $\F^m_i \not\models \gamma^m(\F^m_i, \emp)$ by \Cref{3: Thm scf char} and $\F^m_i \models \gamma^m(\F^m_{i'}, \emp)$ for all $i' \in I$ by \Cref{Lem K4m1 stable fml ex}, hence $L_I \neq L_{I'}$. Since there are continuum many subsets of $\omega$ with 0 and without 1, there are continuum many such logics. Moreover, since $\F^m_1 \models L_I$ by $1 \notin I$ and \Cref{Lem K4m1 stable fml ex}, and $\F^m_1 \not\models \Kff{m'+1}{1}$ for all $1 \leq m' < m$, we have $\Kff{m'+1}{1} \not\subseteq L_I$ for all $1 \leq m' < m$. Finally, while $\F^m_1 \models L_I$, we have $\F^m_0 \not\models L_I$ because $\F^m_0 \not\models \gamma^m(\F^m_0, \emp)$ by $0 \in I$ and \Cref{3: Thm scf char}. But $\F^m_0$ is a subframe of $\F^m_1$ by removing the points $y_2$ and $z_2$. This indeed yields a clopen subset of $\F^m_1$ since $\F^m_1$ is finite and every subset is clopen. Thus, we conclude that each $L_I$ is not a subframe logic.
\end{proof}

Next, we turn to Jankov formulas, which axiomatize exactly union-splittings.

\begin{definition}
    Let $X$ be a complete lattice. A \emph{splitting pair} of $X$ is a pair $(x, y)$ of elements of $X$ such that $x \not\leq y$ and for any $z \in X$, either $x \leq z$ or $z \leq y$. If $(x, y)$ is a splitting pair of $X$, we say that $x$ \emph{splits} $X$ and $y$ is a \emph{splitting} in $X$. 
\end{definition}

This notion has been an important tool in the study of lattices of logics. See \cite[Section 10.7]{czModalLogic1997} for a historical overview. 

\begin{definition} 
    Let $L_0$ and $L$ be logics.
    \begin{enumerate}
        \item $L$ is a \emph{splitting} in $\NExt{L_0}$ iff it is a lattice-theoretic splitting in the lattice $\NExt{L_0}$.
        \item $L$ is a \emph{union-splitting} in $\NExt{L_0}$ iff it is the join of a set of splittings in $\NExt{L_0}$.
    \end{enumerate}
\end{definition}

\begin{theorem}
    Let $m \geq 1$.
    \begin{enumerate}
        \item A logic $L \in \NExt{\Kff{m+1}{1}}$ is a splitting logic in $\NExt{\Kff{m+1}{1}}$ iff $L = \Kff{m+1}{1} + \gamma^m(\A, A)$ for some finite s.i.~$\Kff{m+1}{1}$-algebra $\A$.
        \item A logic $L \in \NExt{\Kff{m+1}{1}}$ is a union-splitting logic in $\NExt{\Kff{m+1}{1}}$ iff $L = \Kff{m+1}{1} + \{\gamma^m(\A_i, A_i): i \in I\}$ for a set $\{\A_i: i \in I\}$ of finite s.i.~$\Kff{m+1}{1}$-algebras.
    \end{enumerate}
\end{theorem}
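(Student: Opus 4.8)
The plan is to establish Part 1 and then read off Part 2, since a union-splitting is by definition a join of splitting logics and, in $\NExt{\Kff{m+1}{1}}$, the join of logics $\Kff{m+1}{1}+\Phi_i$ is axiomatized by the union $\bigcup_i \Phi_i$ of their axiom sets. I work through the order-reversing correspondence between $\NExt{\Kff{m+1}{1}}$ and the lattice of subvarieties of $\V_0 = \V(\Kff{m+1}{1})$, under which a logic that splits is completely join-prime, its companion is completely meet-prime, and the dual roles are interchanged. The decisive semantic ingredient is the Jankov characterization recorded just before the theorem: $\B \not\models \gamma^m(\A,A)$ iff $\A$ is a subalgebra of an s.i.\ homomorphic image of $\B$. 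Throughout I read ``$L$ is a splitting logic'' as ``$L$ is the component that splits in a splitting pair'', the companion being $\Log(\A)$; this is the reading that makes Part 2's identification of union-splittings with unions of Jankov axioms come out correctly.

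For the easy direction of Part 1, given a finite s.i.\ $\A$ I would verify that $(\Kff{m+1}{1}+\gamma^m(\A,A),\ \Log(\A))$ is a splitting pair. The Jankov characterization yields a clean dichotomy for every $M \in \NExt{\Kff{m+1}{1}}$: if $\gamma^m(\A,A)\in M$ then $\Kff{m+1}{1}+\gamma^m(\A,A)\subseteq M$; otherwise some s.i.\ algebra $\B\in\V(M)$ refutes $\gamma^m(\A,A)$, so $\A$ is a subalgebra of an s.i.\ homomorphic image of $\B$, whence $\A\in\V(M)$ and $M\subseteq\Log(\A)$. Since $\A$ refutes its own Jankov formula, $\gamma^m(\A,A)\notin\Log(\A)$, so $\Kff{m+1}{1}+\gamma^m(\A,A)\not\subseteq\Log(\A)$, and the two displayed conditions are exactly the splitting-pair requirement.

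For the hard direction, suppose $L$ splits with companion $L'$. Then $L$ is completely join-prime and $L'$ completely meet-prime, directly from the splitting-pair definition. By \Cref{3: Thm scf complete 2}, $L=\Kff{m+1}{1}+\{\gamma^m(\A_i,D_i):i\in I\}$ for finite s.i.\ algebras $\A_i$, so $L$ is the join of the logics $\Kff{m+1}{1}+\gamma^m(\A_i,D_i)$; complete join-primeness collapses this to a single index $j$, giving $L=\Kff{m+1}{1}+\gamma^m(\A_j,D_j)$ with $\A_j$ finite s.i. The main obstacle is now to upgrade this arbitrary stable canonical formula to a genuine Jankov formula, and I expect to handle it through the companion. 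Since $\A_j$ refutes its own stable canonical formula (apply \Cref{3: Thm scf char} with the identity embedding), $\A_j\notin\V(L)$, so the splitting-pair property forces $\Log(\A_j)\subseteq L'$, i.e.\ $\V(L')\subseteq\V(\A_j)$. As $\A_j$ is finite and modal algebras are congruence-distributive, J\'onsson's lemma together with the fact that ultrapowers of a finite algebra are isomorphic to it shows that every s.i.\ member of $\V(\A_j)$ is a homomorphic image of a subalgebra of $\A_j$, hence finite, with only finitely many up to isomorphism. Thus $\V(L')$ is generated by finitely many finite s.i.\ algebras $\B_1,\dots,\B_r$, so $\V(L')=\V(\B_1)\vee\cdots\vee\V(\B_r)$; join-primeness of $\V(L')$ (dual to meet-primeness of $L'$) gives $\V(L')=\V(\A)$ for a single finite s.i.\ $\A=\B_k$. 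Dualizing the splitting pair, $\V(L)$ is the companion of the join-prime $\V(\A)$, namely the largest subvariety omitting $\A$, which by the Jankov characterization is exactly $\V(\gamma^m(\A,A))$; hence $L=\Kff{m+1}{1}+\gamma^m(\A,A)$.

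Part 2 then follows at once. For one direction, $\Kff{m+1}{1}+\{\gamma^m(\A_i,A_i):i\in I\}=\bigvee_i\bigl(\Kff{m+1}{1}+\gamma^m(\A_i,A_i)\bigr)$ is a join of splitting logics by Part 1, hence a union-splitting. Conversely, a union-splitting $L$ is a join of splitting logics, each of which equals $\Kff{m+1}{1}+\gamma^m(\A_i,A_i)$ for finite s.i.\ $\A_i$ by Part 1, and the join of these is $\Kff{m+1}{1}+\{\gamma^m(\A_i,A_i):i\in I\}$. The only delicate point is that ``splitting'' here must refer to the logic that does the splitting, so that joins of splitting logics match unions of Jankov axioms, which is precisely the convention under which Part 1 was proved.
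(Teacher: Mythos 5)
Your proposal is correct, and its easy direction of Part 1 (building the splitting pair $(\Kff{m+1}{1}+\gamma^m(\A,A),\Log(\A))$ from the Jankov characterization) as well as the reduction of Part 2 to Part 1 coincide with the paper's argument; you also resolve the paper's terminological wrinkle the same way its proof implicitly does, reading ``splitting logic'' as the first component of a splitting pair. Where you genuinely diverge is the hard direction of Part 1. The paper appeals to McKenzie's theorem (\Cref{3: Thm McKenzie split s.i.}), which together with the fmp of $\Kff{m+1}{1}$ (\Cref{3: Cor K4m1 fmp}) yields that the splitting companion equals $\Log(\A)$ for some finite s.i.\ $\Kff{m+1}{1}$-algebra $\A$, and then concludes $L = \Kff{m+1}{1}+\gamma^m(\A,A)$ because a splitting pair is determined by its second component (via the already-proved easy direction). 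You instead (i) use the axiomatization theorem (\Cref{3: Thm scf complete 2}) together with complete join-primeness of $L$ to write $L$ as $\Kff{m+1}{1}+\gamma^m(\A_j,D_j)$ for a single stable canonical formula, (ii) identify the companion as $\Log(\A)$ for a single finite s.i.\ algebra via J\'onsson's lemma (plus triviality of ultrapowers of finite algebras) and complete meet-primeness of the companion, and (iii) finish by recognizing $\V(L)$ as the largest subvariety omitting $\A$, which the Jankov characterization (\Cref{3: Thm scf char}) identifies with the variety of $\Kff{m+1}{1}+\gamma^m(\A,A)$. In effect, you re-prove the needed instance of McKenzie's theorem rather than cite it: your route is more self-contained and yields the nice intermediate observation that every splitting logic is axiomatized by a single stable canonical formula, at the cost of invoking J\'onsson's lemma, which the paper never needs; the paper's route is shorter and leans on the two classical results it quotes anyway. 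Two harmless details you could make explicit: the first component of a splitting pair is never the bottom of the lattice, so the join in step (i) is over a nonempty index set; and the algebra $\A$ found in step (ii) is indeed a $\Kff{m+1}{1}$-algebra since it lies in $\V(L')\subseteq\V(\Kff{m+1}{1})$.
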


\begin{proof}
    (2) is an immediate consequence of (1). To show (1), first suppose that $L = \Kff{m+1}{1} + \gamma^m(\A, A)$ for some finite s.i.~$\Kff{m+1}{1}$-algebra $\A$. We show that $(L, \Log(\A))$ is a splitting pair in $\NExt{\Kff{m+1}{1}}$. For any logic $L' \in \NExt{\Kff{m+1}{1}}$ such that $L \not\subseteq L'$, there is a modal algebra $\B$ such that $\B \models L'$ and $\B \not\models L$. Thus, $\B \not\models \gamma^m(\A, A)$, namely, $\A$ is a subalgebra of a homomorphic image of $\B$. So, $L' \subseteq \Log(\B) \subseteq \Log(\A)$. Hence, $(L, \Log(\A))$ is a splitting pair in $\NExt{\Kff{m+1}{1}}$. 
    
    Conversely, suppose that $L$ is a splitting logic in $\NExt{\Kff{m+1}{1}}$. Then by \Cref{3: Thm McKenzie split s.i.} below, since $\NExt{\Kff{m+1}{1}}$ has the fmp (\Cref{3: Cor K4m1 fmp}), $(L, \Log(\A))$ is a splitting pair in $\NExt{\Kff{m+1}{1}}$ for some finite s.i.~$\Kff{m+1}{1}$-algebra $\A$. By the argument above,$(\Kff{m+1}{1} + \gamma^m(\A, A), \Log(\A))$ is a splitting pair in $\NExt{\Kff{m+1}{1}}$. It follows that $L = \Kff{m+1}{1} + \gamma^m(\A, A)$.
\end{proof}

A similar characterization of (union-)splitting logics in $\NExt{\wKf}$ (where $\wKf = \K + \Dia\Dia p \to \Dia p \lor p$) can be found in \cite{CANONICALFORMULASWK4}. Note that both of these characterizations follow from the combination of the following two general results by McKenzie \cite{mckenzieEquationalBasesNonmodular1972} and by Rautenberg \cite{rautenbergSplittingLatticesLogics1980}; the proof of the latter uses a version of characteristic formulas.

\begin{theorem}[\cite{mckenzieEquationalBasesNonmodular1972}] \label{3: Thm McKenzie split s.i.}
    Let $L$ be a logic with the fmp. If a logic $L'$ splits $\NExt{L}$, then $L' = \Log(\A)$ for some finite s.i.~$L$-algebra $\A$.
\end{theorem}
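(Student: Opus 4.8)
The plan is to recast this as the classical McKenzie splitting theorem for congruence-distributive varieties, obtaining the required finiteness from the fmp together with J\'onsson's lemma. Recall that $L'' \mapsto \V(L'')$ is an order-reversing bijection between $\NExt{L}$ and the subvarieties of $\V(L)$, and that modal algebras form a congruence-distributive variety (they carry a lattice, indeed Boolean, reduct). Unwinding the definition of a splitting, the hypothesis furnishes a logic $M \in \NExt{L}$ with $M \not\subseteq L'$ such that every $L'' \in \NExt{L}$ satisfies $M \subseteq L''$ or $L'' \subseteq L'$. First I would dualize: putting $\mathcal{W} = \V(L')$, this says $\mathcal{W} \not\subseteq \V(M)$ and that every subvariety $\mathcal{U}$ of $\V(L)$ satisfies $\mathcal{W} \subseteq \mathcal{U}$ or $\mathcal{U} \subseteq \V(M)$; that is, $\mathcal{W}$ is the lower element of a splitting pair of the subvariety lattice.

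The first key step is that $\mathcal{W}$ is \emph{completely join-prime}: if $\mathcal{W} \subseteq \bigvee_i \mathcal{U}_i$ while $\mathcal{W} \not\subseteq \mathcal{U}_i$ for all $i$, then the splitting property forces $\mathcal{U}_i \subseteq \V(M)$ for every $i$, whence $\mathcal{W} \subseteq \bigvee_i \mathcal{U}_i \subseteq \V(M)$, contradicting $\mathcal{W} \not\subseteq \V(M)$. I would then apply this twice. Since every variety is generated by its s.i.~members, $\mathcal{W} = \bigvee\{\V(\A) : \A \in \mathcal{W} \text{ s.i.}\}$, and join-primeness delivers a single s.i.~algebra $\A \in \mathcal{W}$ with $\mathcal{W} = \V(\A)$. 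Next, the fmp of $L$ gives $\V(L) = \bigvee\{\V(\B) : \B \text{ a finite s.i.\ } L\text{-algebra}\}$; since $\mathcal{W} \subseteq \V(L)$, join-primeness now yields a single finite s.i.~$L$-algebra $\B_0$ with $\mathcal{W} \subseteq \V(\B_0)$.

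The crux is to upgrade the a priori infinite generator $\A$ to a finite algebra. Here I would invoke J\'onsson's lemma: as $\A$ is s.i.~and $\A \in \mathcal{W} \subseteq \V(\B_0) = \V(\{\B_0\})$, congruence-distributivity gives $\A \in \H\S\PU(\{\B_0\})$; since $\B_0$ is finite every ultrapower is isomorphic to $\B_0$, so $\A \in \H\S(\B_0)$ and $|\A| \le |\B_0| < \infty$. Thus $\A$ is a finite s.i.~algebra, and $\A \in \mathcal{W} \subseteq \V(L)$ makes it an $L$-algebra. Finally, from $\mathcal{W} = \V(\A)$ we conclude $L' = \Log(\V(L')) = \Log(\mathcal{W}) = \Log(\A)$, as required.

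The main obstacle is precisely this finiteness step. Join-primeness by itself only situates $\mathcal{W}$ below the variety of \emph{some} finite algebra $\B_0$; without the fmp there is no reason for such a finite $\B_0$ to exist, and it is J\'onsson's lemma that then transports finiteness back to the s.i.~generator $\A$ itself. I would also take care to verify the two supporting facts on which the argument silently rests: that the correspondence $L'' \mapsto \V(L'')$ reverses inclusion, so that the companion $L'$ becomes the \emph{lower} (completely join-prime) element of the dual splitting pair, and that modal algebras indeed form a congruence-distributive variety so that J\'onsson's lemma is applicable.
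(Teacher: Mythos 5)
The paper gives no proof of this theorem: it is stated as a quotation of McKenzie's result \cite{mckenzieEquationalBasesNonmodular1972}, so there is no internal argument to compare yours against. Your proof is correct, and it is in essence the standard proof of McKenzie's theorem: transport the splitting pair across the anti-isomorphism $L'' \mapsto \V(L'')$ onto the subvariety lattice of $\V(L)$; observe that $\V(L')$ is then completely join-prime; use Birkhoff's subdirect representation to write $\V(L') = \V(\A)$ for a single s.i.~algebra $\A$; use the fmp to place $\V(L')$ inside $\V(\B_0)$ for a single \emph{finite} s.i.~$L$-algebra $\B_0$; and finally use congruence distributivity of modal algebras (lattice reducts) together with J\'onsson's lemma and the triviality of ultrapowers of finite algebras to get $\A \in \mathsf{HS}(\B_0)$, hence $\A$ finite. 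Each of these steps is sound, including the two supporting facts you flag at the end.

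One choice you made deserves explicit comment, since it is exactly where a literal reading would derail. The paper's own convention declares that the \emph{first} component $x$ of a splitting pair $(x,y)$ ``splits'' the lattice, while the second ``is a splitting''. Read with that convention, the statement would assert that the first component equals $\Log(\A)$, which is false: for instance, $\Kf + \gamma(\A, A)$, with $\A$ the two-element algebra of the reflexive singleton, splits $\NExt{\Kf}$ in this sense, yet it is contained in $\GL$, hence has infinitely many normal extensions and so cannot be the logic of a finite algebra. You instead unwound the hypothesis as ``$(M, L')$ is a splitting pair for some $M$'', i.e.\ you took $L'$ to be the second (meet-prime) component. That is the reading under which the theorem is true, it is what McKenzie actually proved, and it matches how the paper itself applies the theorem in its characterization of splittings of $\NExt{\Kff{m+1}{1}}$: there, from ``$L$ splits'' one concludes that the \emph{companion} is $\Log(\A)$ and then identifies $L$ via uniqueness of the companion. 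So your proof establishes the intended statement; you have tacitly corrected a role-swap in the paper's phrasing, and it would be worth noting that explicitly.
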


\begin{theorem}[\cite{rautenbergSplittingLatticesLogics1980}]
    Let $L$ be a logic containing the logic $\K + \Dia^{m+1} p \to \Dia^m p \lor \cdots \lor p$. Then for any finite s.i.~$L$-algebra $\A$, the logic $\Log(\A)$ splits $\NExt{L}$.
\end{theorem}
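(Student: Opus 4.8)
The plan is to exhibit $\Log(\A)$ as the lower element of an explicit splitting pair whose upper companion is $L$ extended by the Jankov formula of $\A$. Writing $\rho(\A, A) = \Gamma/\Delta$ for the stable canonical rule of \Cref{3: Def scr}, let
\[\chi \;:=\; \Box^{\leq m}\Land\Gamma \to \Lor\{\Box^{\leq m}\delta : \delta \in \Delta\}\]
be the formula built by the recipe of \Cref{3: Def scf for pretran} (this is $\gamma^m(\A, A)$, a \emph{Jankov formula} in the sense of \Cref{3: Def stable and Jankov formulas}). I claim that $(L + \chi, \Log(\A))$ is a splitting pair of $\NExt{L}$, which is exactly what it means for $\Log(\A)$ to split $\NExt{L}$.

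The first step is to isolate the only consequence of the hypothesis that the argument uses. From $L \supseteq \K + (\Box^{\leq m} p \to \Box^{m+1} p)$ we get $\Box^{\leq m} a \leq \Box^{m+1} a$ in every $L$-algebra, and iterating this inequality yields $\Box^{\leq m} a \leq \Box^k a$ for every $k \in \omega$. Consequently $\Box^{\leq m}$ behaves as an $\Sf$-box: it is reflexive, and $\Box^{\leq m} a \leq \Box^{\leq n}\Box^{\leq m} a$ for all $n$, so it is transitive. This is precisely the feature of $\Kff{m+1}{1}$ that drove the proofs of \Cref{3: Lem si auxiliary} and \Cref{3: Thm scf char}: the set $\up \Box^{\leq m} a$ is a $\Box$-filter because $\Box(\Box^{\leq m} a) \geq \Box^{\leq m} a$, and the opremum computation collapses $\Box^{\leq n}(\Box^{\leq m}V(\delta)) \leq c$ down to $\Box^{\leq m}V(\delta) \leq c$ using $\Box^{\leq m} a \leq \Box^k a$. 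Since neither proof uses anything stronger than $\Box^{\leq m} p \to \Box^{m+1} p$, I would re-run them over the weaker base $L$ (with \Cref{3: Lem si reflect stable subalg} still guaranteeing that s.i.-ness is preserved under stable subalgebras) to obtain, for every $L$-algebra $\B$, the characterization
\[\B \not\models \chi \iff \A \text{ is a subalgebra of some s.i.\ homomorphic image of } \B.\]

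Granting this, the splitting-pair verification is short. For $L + \chi \not\subseteq \Log(\A)$: since $\A$ is a subalgebra of $\A$ and $\A$ is an s.i.\ homomorphic image of itself, we have $\A \not\models \chi$, so $\chi \notin \Log(\A)$. For the covering condition, let $M \in \NExt{L}$ with $L + \chi \not\subseteq M$, i.e.\ $\chi \notin M$; then some $L$-algebra $\B \models M$ refutes $\chi$, so $\A$ embeds as a subalgebra of an s.i.\ homomorphic image $\C$ of $\B$. As $\C$ is a quotient of $\B \models M$ we have $\C \models M$, and since $\V(M)$ is closed under subalgebras, $\A \models M$, hence $M \subseteq \Log(\A)$. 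Thus every $M \in \NExt{L}$ satisfies $L + \chi \subseteq M$ or $M \subseteq \Log(\A)$, so $(L + \chi, \Log(\A))$ is a splitting pair.

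The main obstacle is the middle paragraph: faithfully transporting \Cref{3: Thm scf char} and \Cref{3: Lem si auxiliary} from $\Kff{m+1}{1}$ to the strictly weaker base $\K + (\Box^{\leq m} p \to \Box^{m+1} p)$. Concretely, I must verify that wherever those proofs invoked the $\Kff{m+1}{1}$ axiom $\Box a \leq \Box^{m+1} a$, the weaker $\Box^{\leq m} a \leq \Box^{m+1} a$ suffices---most delicately in checking that $\up \Box^{\leq m} a$ is closed under $\Box$ and that the quotient produced in the analogue of \Cref{3: Lem si auxiliary} is again s.i. Once this is secured, the formula $\chi = \gamma^m(\A, A)$ and its Jankov characterization are available over $L$, and the splitting argument above applies verbatim.
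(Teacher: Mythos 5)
Your proof is correct, and it is essentially the argument the paper itself relies on: the paper does not reprove this theorem (it cites Rautenberg, remarking only that his proof ``uses a version of characteristic formulas''), and your splitting-pair verification with $\gamma^m(\A,A)$ as the characteristic formula is exactly the argument the paper carries out for the special case of $\NExt{\Kff{m+1}{1}}$ in its characterization of splitting logics, transported to the weaker $m$-transitive base. Your identification of the crux is also accurate: \Cref{3: Lem si auxiliary} and \Cref{3: Thm scf char} use the $\Kff{m+1}{1}$ axiom only through the two facts you isolate---that $\up \Box^{\leq m} a$ is a $\Box$-filter and that $\Box^{\leq m} a \leq \Box^{k} a$ for all $k \in \omega$---and both already follow from $\Box^{\leq m} p \to \Box^{m+1} p$, so the Jankov-style characterization of $\not\models \gamma^m(\A,A)$ over the weaker base, and hence the splitting pair $(L + \gamma^m(\A,A), \Log(\A))$, is sound.
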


\section[\texorpdfstring{The $m$-closed domain condition and $m$-stable canonical formulas}{The m-closed domain condition and m-stable canonical formulas}]{The $m$-closed domain condition and $m$-stable canonical formulas} \label{Sec 6}

A typical type of application of stable canonical formulas is to reduce a question for all formulas to all stable canonical ones, using the fact that every formula is equivalent to finitely many stable canonical formulas. Then, one can deduce results for all formulas by only working with stable canonical formulas. In this respect, if we can restrict to a smaller class of formulas that still covers all formulas, then this reduction could become even more powerful.

In this section, we consider a variant of stable canonical formulas, called $m$-stable canonical formulas, for the pre-transitive logics $\NExt{\Kff{m+1}{1}}$ ($m \geq 1$). As we will see in \Cref{Ex m-scf}, up to equivalence, these formulas form a proper subclass of the class of stable canonical formulas, while they do axiomatize all extensions of $\NExt{\Kff{m+1}{1}}$ (\Cref{3: Thm mscf complete}, cf. \Cref{3: Thm scf complete 2}). The new formulas are based on the $m$-closed domain condition, which means to preserve the modality not only one step but also up to $m$ steps. This condition seems to match better the nature of pre-transitive logics that the master modality in $\NExt{\Kff{m+1}{1}}$ is defined by $\Dia^{\leq m}$, and thus could be more handy for applications.

\begin{definition}
    Let $\A$ and $\B$ be modal algebras and $h: A \to B$ be a stable homomorphism. For $a \in A$, we say that $h$ satisfies the \emph{$m$-closed domain condition ($m$-CDC) for $a$} if $h(\Dia^k a) = \Dia^k h(a)$ for all $1 \leq k \leq m$. For $D \subseteq A$, we say that $h$ satisfies the \emph{$m$-closed domain condition ($m$-CDC) for $D$} if $h$ satisfies $m$-CDC for all $a \in D$.
\end{definition}

Similar to the case of CDC, for stable homomorphisms, $h(\Dia^k a) = \Dia^k h(a)$ holds iff $h(\Dia^k a) \leq \Dia^k h(a)$ holds. We also provide the frame-theoretic version of the definition and show that the two definitions are dual to each other.

\begin{definition} \label{3: Def mCDC sp}
    Let $\X = (X, R)$ and $\Y = (Y, Q)$ be modal spaces and $f: X \to Y$ be a stable map. For a clopen subset $D \subseteq Y$, we say that $f$ satisfies the \emph{$m$-closed domain condition ($m$-CDC) for $D$} if for all $1 \leq k \leq m$,
\[
Q^k[f(x)] \cap D \neq \emptyset \Rightarrow f(R^k[x]) \cap D \neq \emptyset.
\]
    For a set $\D$ of clopen subsets of $Y$, we say that $f: X \to Y$ satisfies the \emph{$m$-closed domain condition ($m$-CDC) for $\D$} if $f$ satisfies $m$-CDC for all $D \in \D$.
\end{definition}

\begin{proposition}
Let $\A$ and $\B$ be modal algebras and $h: A \to B$ be a stable homomorphism. For any $a \in A$, 
\[\text{$h$ satisfies $m$-CDC for $a$ iff $h_*$ satisfies $m$-CDC for $\beta(a)$.}\] 
For any $D \subseteq A$, 
\[\text{$h$ satisfies $m$-CDC for $D$ iff $h_*$ satisfies $m$-CDC for $\beta[D]$.}\]
\end{proposition}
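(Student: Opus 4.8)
The plan is to reduce the statement to a clause-by-clause equivalence between an algebraic inequality and a frame-theoretic implication, one for each power $k$ with $1 \le k \le m$, and then take conjunctions over $k$ (and over $a \in D$ for the set version). Throughout I write $X = \B_*$ with relation $R$ and $Y = \A_*$ with relation $Q$, so that $f = h_* \colon X \to Y$; recall $f(x) = h^{-1}[x]$.

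First I would record two preliminary observations. The first is that, for a stable homomorphism $h$, the inequality $\Dia^{k} h(a) \le h(\Dia^{k} a)$ holds for every $k \ge 0$: it is trivial for $k = 0$, and the inductive step follows by applying $\Dia$ to $\Dia^{k} h(a) \le h(\Dia^{k} a)$ and then using stability in the form $\Dia h(\Dia^{k} a) \le h(\Dia^{k+1} a)$. Consequently $h$ satisfies $m$-CDC for $a$ if and only if the reverse inequalities $h(\Dia^{k} a) \le \Dia^{k} h(a)$ hold for $1 \le k \le m$, which is the algebraic side of the reformulation noted just before the proposition. The second observation is the duality dictionary: for the relation $S$ dual to $\Dia$ one has $\{x : \Dia c \in x\} = S^{-1}[\{x : c \in x\}]$, and by iterating (using $(S^{-1})^{k} = (S^{k})^{-1}$) one gets $\Dia^{k} c \in x$ iff $S^{k}[x] \cap \{y : c \in y\} \neq \emptyset$; moreover $h(a) \in x$ iff $a \in h^{-1}[x] = f(x)$.

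The core of the proof is then a direct translation. Fix $a \in A$, set $D = \beta(a)$, and fix $1 \le k \le m$. For each $x \in X$, the dictionary gives $h(\Dia^{k} a) \in x$ iff $\Dia^{k} a \in f(x)$ iff $Q^{k}[f(x)] \cap \beta(a) \neq \emptyset$; and $\Dia^{k} h(a) \in x$ iff $R^{k}[x] \cap \{y : h(a) \in y\} \neq \emptyset$ iff $R^{k}[x] \cap f^{-1}[\beta(a)] \neq \emptyset$ iff $f(R^{k}[x]) \cap \beta(a) \neq \emptyset$, where the last equivalence is the elementary identity $f(T) \cap U \neq \emptyset \iff T \cap f^{-1}[U] \neq \emptyset$ and the previous one uses $h(a) \in y \iff a \in f(y)$. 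Since $b_1 \le b_2$ in $\B$ iff every ultrafilter of $B$ containing $b_1$ contains $b_2$, the inequality $h(\Dia^{k} a) \le \Dia^{k} h(a)$ is equivalent to: for all $x \in X$, $h(\Dia^{k} a) \in x$ implies $\Dia^{k} h(a) \in x$. By the two translations this reads exactly as $Q^{k}[f(x)] \cap D \neq \emptyset \Rightarrow f(R^{k}[x]) \cap D \neq \emptyset$ for all $x$, i.e.\ the $k$-step clause of $m$-CDC for $f$ at $D$.

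Taking the conjunction over $1 \le k \le m$ yields the first assertion, that $h$ satisfies $m$-CDC for $a$ iff $h_*$ satisfies $m$-CDC for $\beta(a)$; quantifying additionally over $a \in D$ and recalling $\beta[D] = \{\beta(a) : a \in D\}$ gives the set version. I do not expect a genuine obstacle: the only non-formal inputs are the iterated diamond lemma in the dictionary and the direction reversal under $(-)_*$. The main risk is purely bookkeeping, namely keeping the roles of $(X, R)$ and $(Y, Q)$, the direction of $f$, and the image/preimage swap aligned uniformly across all $k$.
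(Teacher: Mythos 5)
Your proposal is correct and follows essentially the same route as the paper's proof: both reduce $m$-CDC for $h$ to the one-sided inequalities $h(\Dia^k a) \leq \Dia^k h(a)$ (using stability to get the converse inequalities for free) and then translate these through Stone/modal duality into the frame-theoretic clauses, finishing by quantifying over $k$ and over $a \in D$. The only difference is presentational: the paper phrases the duality step as an inclusion of clopen sets, $h_*^{-1}\Dia^k\beta(a) \subseteq \Dia^k h_*^{-1}\beta(a)$, whereas you unfold it pointwise over ultrafilters, which amounts to the same computation.
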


\begin{proof}
    Let $\A$ and $\B$ be modal algebras with dual spaces $\X = (X, R)$ and $\Y = (Y, Q)$ and $h: A \to B$ be a stable homomorphism. Let $a \in A$. It follows from the duality that
    \begin{align*}
        \text{$h$ satisfies $m$-CDC for $a$} &\text{ iff } h(\Dia^k a) \leq \Dia^k h(a) \text{ for all $1 \leq k \leq m$}\\
            &\text{ iff } h_*^{-1} \Dia^k \beta(a) \subseteq \Dia^k h_*^{-1} \beta(a) \text{ for all $1 \leq k \leq m$} \\
            &\text{ iff } Q^k[h_*(x)] \cap \beta(a) \neq \emptyset \Rightarrow h_*(R^k[x]) \cap \beta(a) \neq \emp \text{ for $1 \leq k \leq m$} \\
            &\text{ iff } \text{$h_*$ satisfies $m$-CDC for $\beta(a)$.}
    \end{align*}
    The second statement follows directly from the first. 
\end{proof}

\begin{notation}
    We write $h: \A \inj^m_D \B$ if $h$ is a stable embedding satisfying $m$-CDC for $D$ and $\A \inj^m_D \B$ if there is such an $h$. We write $f: \X \surj^m_\D \Y$ if $f$ is a surjective stable map satisfying $m$-CDC for $\D$ and $\X \surj^m_\D \Y$ if there is such an $f$.
\end{notation}

Now we define \emph{$m$-stable canonical formulas} for pre-transitive logics $\Kff{m+1}{1}$ ($m \geq 1$). The only difference with standard stable canonical formulas is that in $\Gamma$, the part using the closed domain $D$ is generalized so that it captures $m$-CDC.

\begin{definition} \label{3: Def m-scf}
    Let $\A$ be a finite s.i.~$\Kff{m+1}{1}$-algebra and $D \subseteq A$. We define the \emph{$m$-stable canonical formula} $\gamma_+^m (\A, D)$ as 
    \begin{align*}
        \gamma_+^m(\A, D) &= \bigwedge\{\Box^{\leq m} \gamma: \gamma \in \Gamma\} \to \bigvee\{\Box^{\leq m}\delta: \delta \in \Delta\} \\
        &= \Box^{\leq m} \bigwedge \Gamma \to \bigvee\{\Box^{\leq m}\delta: \delta \in \Delta\},
    \end{align*}
    where 
    \begin{align*}
        \Gamma  = & \{p_a \lor p_b \leftrightarrow p_{a \lor b}: a,b \in A\} \cup \\
        & \{\lnot p_a \leftrightarrow p_{\lnot a} : a \in A\} \cup \\
        & \{\Dia p_a \rightarrow p_{\Dia a} : a \in A\} \cup \\
        & \{p_{\Dia^k a} \rightarrow \Dia^k p_a : a \in D, 1 \leq k \leq m\},
    \end{align*}
    and
    \[\Delta = \{p_a : a \in A, a \ne 1\}.\]
\end{definition}

\begin{remark}
    As stable canonical rules, $m$-stable canonical formulas can also be defined directly from finite modal spaces (i.e., finite Kripke frames). The basic idea is the same as \Cref{3: Def m-scf}, but we use $\Gamma$ and $\Delta$ in \Cref{3: Def scr sp} instead and change the last clause in $\Gamma$ to 
    \[\{p_x \to \bigvee \{\Dia^k p_y : y \in D\} : x \in X, D \in \D, x \in( R^{-1})^k[D], 1 \leq k \leq m\}\]
    in light of \Cref{3: Def mCDC sp}.
\end{remark}

\begin{remark}
    In general, $m$-CDC is stronger than the standard CDC. For $m=1$, where the base logic is $\Kf$, the $m$-CDC and $m$-stable canonical formulas reduce to the standard CDC and stable canonical formulas. It is clear that $m$-stable canonical formulas induce the same stable formulas (up to equivalence) as stable canonical formulas. Moreover, since a modal homomorphism always satisfies $m$-CDC for any $D$, they also induce the same Jankov formulas (up to equivalence).
\end{remark}

Following the proof of \Cref{3: Thm scf char}, it is straightforward to verify the semantic condition of the validity of $m$-stable canonical formulas.

\begin{theorem} \label{3: Thm mscf char}
    Let $\A$ be a finite s.i.~$\Kff{m+1}{1}$-algebra and $D \subseteq A$. Then, for any  $\Kff{m+1}{1}$-algebra $\B$, 
    \[\B \not\models \gamma_+^m(\A, D) \text{ iff there is a s.i.~homomorphic image $\C$ of $\B$ such that $\A \inj^m_D \C$}.\]
\end{theorem}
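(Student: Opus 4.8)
The plan is to mirror the proof of \Cref{3: Thm scf char} almost verbatim, since the statement of \Cref{3: Thm mscf char} differs from it only in that the closed domain condition $\inj_D$ is replaced throughout by the $m$-closed domain condition $\inj^m_D$, and the set $\Gamma$ in the $m$-stable canonical formula $\gamma_+^m(\A, D)$ is correspondingly strengthened (the last clause now records $p_{\Dia^k a} \to \Dia^k p_a$ for all $1 \leq k \leq m$). I would split the proof into the two implications and, in each, check that the new clauses in $\Gamma$ correspond exactly to the $m$-CDC requirement under the valuation-induced map $h$. The key observation to isolate at the outset is that, just as with CDC, a stable homomorphism $h$ satisfies $m$-CDC for $a$ iff $h(\Dia^k a) \leq \Dia^k h(a)$ for all $1 \leq k \leq m$ (the reverse inequalities being automatic from stability, since $\Dia^k h(a) \leq h(\Dia^k a)$ follows by iterating $\Dia h(b) \leq h(\Dia b)$).

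\textbf{The two directions.}
For the direction assuming there is a s.i.~homomorphic image $\C$ of $\B$ with a stable embedding $h : \A \inj^m_D \C$, I would proceed exactly as in \Cref{3: Thm scf char}: define $V_A(p_a) = a$ on $\A$ and $V_C(p_a) = h(V_A(p_a))$ on $\C$. The Boolean clauses and the clause $\Dia p_a \to p_{\Dia a}$ are handled by stability as before; the only new point is that the strengthened clauses $p_{\Dia^k a} \to \Dia^k p_a$ (for $a \in D$, $1 \leq k \leq m$) are validated precisely because $h$ satisfies $m$-CDC, i.e.\ $h(\Dia^k a) \leq \Dia^k h(a)$. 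Having verified $V_C(\gamma) = 1$ for all $\gamma \in \Gamma$ and $V_C(\delta) \neq 1$ for all $\delta \in \Delta$, the opremum argument from \Cref{3: Thm scf char} (using \Cref{2: Prop opremum} and $\A, \C \models \Kff{m+1}{1}$ to collapse $\Box^n$ to $\Box^{\leq m}$) goes through unchanged to give $\C \not\models \gamma_+^m(\A,D)$, and hence $\B \not\models \gamma_+^m(\A,D)$ since $\C$ is a homomorphic image of $\B$.

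\textbf{The converse.}
Conversely, assuming $\B \not\models \gamma_+^m(\A, D)$, I would invoke \Cref{3: Lem si auxiliary} exactly as in \Cref{3: Thm scf char} to extract a s.i.~homomorphic image $\C$ of $\B$ with a valuation $V_C$ satisfying $V_C(\Box^{\leq m} \bigwedge \Gamma) = 1$ and $V_C(\bigvee \Box^{\leq m} \delta) \neq 1$. Defining $h(a) = V_C(p_a)$, the Boolean clauses of $\Gamma$ force $h$ to be a Boolean homomorphism, the clause $\Dia p_a \to p_{\Dia a}$ forces stability, and $\Delta$ together with s.i.-ness of $\C$ forces injectivity. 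The single genuinely new verification is that the validity of $p_{\Dia^k a} \to \Dia^k p_a$ for $a \in D$ and all $1 \leq k \leq m$ yields $h(\Dia^k a) \leq \Dia^k h(a)$, which is exactly $m$-CDC for $D$; so $h : \A \inj^m_D \C$.

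\textbf{Main obstacle.}
I expect no serious obstacle, since this is a direct adaptation. The one place demanding care is bookkeeping around the higher modalities: I must confirm that passing from $V_C(\bigwedge\Gamma) = 1$ to the individual subformula values is legitimate even though $\Gamma$ now mentions $\Dia^k p_a$ rather than only $\Dia p_a$, and that the opremum/pre-transitivity collapse $\Box^n \Box^{\leq m} a \geq \Box^{\leq m} a$ used in the forward direction is unaffected by the strengthening of $\Gamma$ (it is, since that step concerns $\Delta$, not $\Gamma$). Thus the proof reduces to pointing to \Cref{3: Thm scf char} and checking that the new $m$-CDC clauses translate into the $m$-many inequalities $h(\Dia^k a) \leq \Dia^k h(a)$.
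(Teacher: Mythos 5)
Your proposal is correct and matches the paper's approach exactly: the paper proves \Cref{3: Thm mscf char} precisely by noting that the proof of \Cref{3: Thm scf char} carries over verbatim, with the new clauses $p_{\Dia^k a} \to \Dia^k p_a$ corresponding to the inequalities $h(\Dia^k a) \leq \Dia^k h(a)$, i.e.\ to $m$-CDC (the reverse inequalities being automatic by stability, as the paper also notes). Your write-up simply makes explicit the details the paper leaves as ``straightforward to verify.''
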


Every $m$-stable canonical formula is equivalent over $\Kff{m+1}{1}$ to a stable canonical formula, but not vice versa. Thus, up to equivalence, the class of $m$-stable canonical formulas is a proper subclass of the class of stable canonical formulas. It is easy to see from the semantic characterizations Theorems \ref{3: Thm scf char} and \ref{3: Thm mscf char} that, any $m$-stable canonical formula $\gamma_+^m(\A, D)$ is equivalent to the stable canonical formula $\gamma^m(\A, D')$, where $D' = \{\Dia^{k-1} d: d \in D, 1 \leq k \leq m\}$. The following example shows that the converse does not hold.

\begin{example} \label{Ex m-scf}
    We construct a counterexample using finite modal spaces. Let $\X$ be the finite rooted $\Kff{4}{1}$-space depicted as below and $D = \{d\}$.
    \begin{figure}[H]
        \centering
        \begin{tikzpicture}[scale=1]
            \node (1) at (0,0) {\(\bullet\)}; \node at (0.3,0) {\(a\)};
            \node (2) at (0,1) {\(\bullet\)}; \node at (0.3,1) {\(b\)};
            \node (3) at (0,2) {\(\bullet\)}; \node at (0.3,2) {\(c\)};
            \node (4) at (0,3) {\(\bullet\)}; \node at (0.3,3) {\(d\)};
            \node (x) at (0,-0.6) {\( \X\)};
            \draw[->] (1) -- (2);
            \draw[->] (2) -- (3);
            \draw[->] (3) -- (4);
        \end{tikzpicture}
        \end{figure}
    Assume for a contradiction that the stable canonical formula $\gamma^3(\X, \{D\})$ is equivalent to some $3$-stable canonical formula $\gamma_+^3(\X', \D')$. Then $\X \not\models \gamma_+^3(\X', \D')$ and $\X' \not\models \gamma^3(\X, \{D\})$. By the dual of \Cref{3: Thm scf char} and \Cref{3: Thm mscf char}, one can verify that $\X' = \X$. Let $\Y$ and $\Y'$ be rooted $\Kff{4}{1}$-spaces depicted as below.
    \begin{figure}[H]
        \centering
        \begin{tikzpicture}[scale=1]
            \node (01) at (0,0) {\( \bullet\)};
            \node (02) at (0,1) {\( \bullet\)};
            \node (03) at (0,2) {\( \bullet\)};
            \node (04) at (0,3) {\( \bullet\)};
            \node (05) at (1,1) {\( \bullet\)};
            \node (y) at (0.5,-0.6) {\( \Y\)};
            \draw[->] (01) -- (02);
            \draw[->] (02) -- (03);
            \draw[->] (03) -- (04);
            \draw[->] (01) -- (05);

            \node (11) at (6,0) {\( \bullet\)};
            \node (12) at (6,1) {\( \bullet\)};
            \node (13) at (6,2) {\( \bullet\)};
            \node (14) at (6,3) {\( \bullet\)};
            \node (15) at (7,1) {\( \bullet\)};
            \node (16) at (7,2) {\( \bullet\)};
            \node (y') at (6.5,-0.6) {\( \Y'\)};
            \draw[->] (11) -- (12);
            \draw[->] (12) -- (13);
            \draw[->] (13) -- (14);
            \draw[->] (11) -- (15);
            \draw[->] (15) -- (16);
        \end{tikzpicture}
    \end{figure}
    Note that there is only one stable map from $\Y$ or $\Y'$ to $\X$, and no proper rooted upset of $\Y$ or $\Y'$ can map onto $\X$. It is easy to see that $\Y \surj_D \X$ and $\Y' \not\surj_D \X$, thus $\Y \not\models \gamma^3(\X, \{D\})$ and $\Y' \models \gamma^3(\X, \{D\})$.
    If $d \in D'$ for some $D' \in \D'$, then $\Y \not\surj^3_{\D'} \X$, so $\Y \models \gamma_+^3(\X', \D')$. If there is no $D' \in \D'$ such that $d \in D'$, then $\Y' \surj^3_{\D'} \X$, so $\Y' \not\models \gamma_+^3(\X', \D')$. This contradicts that $\gamma^3(\X, \{D\})$ and $\gamma_+^3(\X', \D')$ are equivalent. 
\end{example}

Now we show that any logic above $\Kff{m+1}{1}$ can be axiomatized by $m$-stable canonical formulas, thus providing an alternative to \Cref{3: Thm scf complete 2}. Recall that in the case of stable canonical formulas, the essential idea was to construct finite refutation patterns as done in \Cref{3: Lem K4m1 refutation pattern}. The construction in turn essentially depends on the definable filtration developed in \Cref{3: Lem K4m1 filtration}. Thus, to adapt the whole proof for stable canonical formulas to $m$-stable canonical formulas, it suffices to observe that the definable filtration in \Cref{3: Lem K4m1 filtration} in fact induces a stable homomorphism satisfying $m$-CDC.

\begin{lemma} \label{3: Lem filtration m-CDC}
    Let $\A = (A, \Dia)$ be a $\Kff{m+1}{1}$-algebra, $V$ be a valuation on $\A$, $\Theta$ be a finite subformula-closed set of formulas, and $\Theta' = \Sub(\Theta \cup \{\Dia^m\phi: \phi \in \Theta\})$. Let $\Dia_0$ and $\Dia_1$ be the modal operators on $A'$ defined in \Cref{3: Lem K4m1 filtration}.
    Then, the inclusion $i': \A' = (A', \Dia_1) \inj_D \A$ satisfies $m$-CDC for $D$, where $D = \{V(\phi): \Dia \phi \in \Theta\}$. 
\end{lemma}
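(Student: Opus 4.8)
The plan is to build directly on the constructions in the proof of \Cref{3: Lem K4m1 filtration}, since $m$-CDC for an element $d$ is just the simultaneous statement of CDC ``at all depths up to $m$'', and the case $l=1$ is essentially the content of the CDC already verified there. First I would recall the facts assembled in that proof: the least-filtration inclusion $i : (A', \Dia_0) \inj_{D'} (A, \Dia)$ satisfies CDC for $D' = \{V(\psi) : \Dia\psi \in \Theta'\}$; the formula $\Dia_1^l a = \bigvee\{\Dia_0^{km+l} a : k \leq K\}$ for $l \geq 1$; and the inequality $\Dia_0^{m+1} d \leq \Dia_0 d$ for $d \in D$, together with the stability of $i'$.

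The next step is the membership bookkeeping that makes everything work. Fix $d \in D$, say $d = V(\phi)$ with $\Dia\phi \in \Theta$. Because $\Theta' = \Sub(\Theta \cup \{\Dia^m\phi : \phi \in \Theta\})$, we have $\Dia^{m+1}\phi \in \Theta'$ (since $\Dia^{m+1}\phi = \Dia^m(\Dia\phi)$ and $\Dia\phi \in \Theta$), and subformula-closure of $\Theta'$ gives $\Dia^k\phi \in \Theta'$ for all $0 \leq k \leq m+1$. Hence $d, \Dia d, \dots, \Dia^m d \in D'$, and iterating the CDC for $D'$ along this chain yields the identities $\Dia^k d = \Dia_0^k d$ for every $0 \leq k \leq m$. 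I would stress that this is exactly the place where the enlargement from $\Theta$ to $\Theta'$ is indispensable, echoing \Cref{3: Rem mismatch theta}.

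The core computation is then purely order-theoretic. Applying the monotone operator $\Dia_0$ to $\Dia_0^{m+1} d \leq \Dia_0 d$ gives $\Dia_0^{m+l} d \leq \Dia_0^l d$ for every $l \geq 1$, and an induction on $k$ upgrades this to $\Dia_0^{km+l} d \leq \Dia_0^l d$ for all $k \geq 0$. Joining over $k \leq K$ yields $\Dia_1^l d \leq \Dia_0^l d$. Restricting to $1 \leq l \leq m$ and substituting $\Dia_0^l d = \Dia^l d$ from the previous step, I obtain $i'(\Dia_1^l d) = \Dia_1^l d \leq \Dia^l d = \Dia^l i'(d)$.

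To conclude, I would invoke the stability of $i'$, which iterates to $\Dia^l i'(d) \leq i'(\Dia_1^l d)$; combined with the reverse inequality just obtained, this gives the equality $i'(\Dia_1^l d) = \Dia^l i'(d)$ for all $1 \leq l \leq m$, i.e.\ $m$-CDC for $d$. As $d \in D$ was arbitrary, $i'$ satisfies $m$-CDC for $D$. I expect no genuine obstacle here; the only delicate point is the membership claim $d, \Dia d, \dots, \Dia^m d \in D'$, on which the whole argument hinges, and which is precisely where the design of $\Theta'$ pays off.
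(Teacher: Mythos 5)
Your proof is correct and takes essentially the same route as the paper's: you recall the same ingredients from the proof of \Cref{3: Lem K4m1 filtration} (CDC for $D'$, the formula $\Dia_1^l a = \bigvee\{\Dia_0^{km+l}a : k \leq K\}$, and $\Dia_0^{km+1}d \leq \Dia_0 d$), establish $\Dia_0^l d = \Dia^l d$ for $1 \leq l \leq m$ via the membership chain $d, \Dia d, \dots, \Dia^m d \in D'$, and finish with the same monotonicity computation. The only cosmetic differences are that you spell out the bookkeeping showing $\Dia^k\phi \in \Theta'$ and the reverse inequality from stability, both of which the paper cites or leaves implicit.
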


\begin{proof}
    Let $d \in D$. Following the proof of \Cref{3: Lem K4m1 filtration}, let $D' = \{V(\phi): \Dia \phi \in \Theta'\}$. Then we saw in the proof that $\Dia^md = \Dia_0^md, \dots, \Dia d = \Dia_0 d$, and these points as well as $d$ are in $\D'$. Since the inclusion $i: (A', \Dia_0) \inj_{D'} \A$ is the same map as $i'$, we have $i'(\Dia_0^ld) \leq \Dia^l i'(d)$ for all $1 \leq l \leq m$. Also, recall from the proof of \Cref{3: Lem K4m1 filtration} that $\Dia_1^la = \bigvee \{\Dia_0^{km+l}a: k \leq K\}$ for $l \geq 1$ and $a \in A'$ and $\Dia_0^{km+1}d \leq \Dia_0d$ for all $k \geq 1$. Thus, for any $1 \leq l \leq m$ we have
    \[i'(\Dia_1^l d) = i'(\Lor \{\Dia_0^{km+l} d: k \leq K\}) \leq i'(\Dia_0^l d) \leq \Dia^l i'(d).\]
    Hence, we conclude that $i'$ satisfies $m$-CDC for $D$.
\end{proof}

\begin{theorem} \label{3: Thm mscf complete}
    For any formula $\phi$, there exist stable canonical formulas $\gamma_+^m(\A_1, D_1), \dots$, $\gamma_+^m(\A_n, D_n)$ where each $\A_i$ is a finite s.i.~$\Kff{m+1}{1}$-algebra and $D_i \subseteq A_i$, such that for any s.i.~modal algebra $\B$, 
    \[\B \models \phi \text{ iff } \B \models \Land \{\gamma_+^m(\A_i, D_i): 1 \leq i \leq n\}.\]
\end{theorem}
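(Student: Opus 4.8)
The plan is to imitate the proof of \Cref{3: Thm scf complete}, replacing each of its two ingredients by its $m$-CDC analogue. One ingredient, the semantic characterization, is already available as \Cref{3: Thm mscf char}, so the real task is to upgrade the refutation-pattern lemma \Cref{3: Lem K4m1 refutation pattern} to the $m$-CDC setting. Concretely, I would prove: for any $\phi$ there are pairs $(\A_1, D_1), \dots, (\A_n, D_n)$ of finite s.i.~$\Kff{m+1}{1}$-algebras with $D_i \subseteq A_i$ such that, for every s.i.~modal algebra $\B$, the following are equivalent: $\B \not\models \phi$; there is some $i$ with $\A_i \inj^m_{D_i} \B$; and there are a s.i.~homomorphic image $\C$ of $\B$ and some $i$ with $\A_i \inj^m_{D_i} \C$.

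The enumeration of tuples $(\A, V, D)$ would be taken verbatim from the proof of \Cref{3: Lem K4m1 refutation pattern}, with $\Theta'$ the extended set supplied by the definable filtration of \Cref{3: Lem K4m1 filtration}; local finiteness of Boolean algebras again leaves only finitely many such tuples up to isomorphism. The single point of divergence is the direction $(1) \Rightarrow (2)$: given $\B \not\models \phi$ witnessed by $V$, I would pass to the definable filtration $(\B', V')$, note that $\B'$ is a stable subalgebra of $\B$ and hence s.i.~by \Cref{3: Lem si reflect stable subalg}, and match $(\B', V', D)$ with some $(\A_i, V_i, D_i)$. Where the old proof only recorded $\B' \inj_{D} \B$, I would instead invoke \Cref{3: Lem filtration m-CDC} to conclude that this inclusion satisfies $m$-CDC for $D$, i.e.\ $\B' \inj^m_{D} \B$, giving $\A_i \inj^m_{D_i} \B$. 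For the remaining directions, $(2) \Rightarrow (3)$ is trivial with $\C = \B$, and $(3) \Rightarrow (1)$ is unchanged: since $m$-CDC subsumes ordinary CDC (its $k=1$ instance), an embedding $\A_i \inj^m_{D_i} \C$ is in particular a stable embedding satisfying CDC for $D_i$, so the Filtration-Lemma computation from the proof of \Cref{3: Thm scr complete} still yields $\C \not\models \phi$ and hence $\B \not\models \phi$. With this lemma in hand, the theorem follows by combining it with \Cref{3: Thm mscf char} exactly as \Cref{3: Thm scf complete} followed from \Cref{3: Lem K4m1 refutation pattern} and \Cref{3: Thm scf char}.

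The main obstacle is already discharged upstream: everything hinges on the definable filtration for $\Kff{m+1}{1}$ preserving the modality not one but $m$ steps on the closed domain, and this is exactly what \Cref{3: Lem filtration m-CDC} provides. Once that lemma is cited, no genuinely new argument is needed—every other step is a transcription of the stable-canonical-formula proof, relying only on the fact that $m$-CDC strengthens CDC. The one thing worth double-checking is that neither the enumeration of tuples nor the s.i.-reflection step is affected by the passage from CDC to $m$-CDC; both depend solely on the stable-subalgebra structure of the filtration and on no domain condition, so they transfer without change.
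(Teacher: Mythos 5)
Your proposal is correct and takes essentially the same route as the paper: the paper's own proof likewise uses \Cref{3: Lem filtration m-CDC} to strengthen the refutation-pattern lemma (\Cref{3: Lem K4m1 refutation pattern}) so that the embeddings satisfy $m$-CDC, and then combines this with \Cref{3: Thm mscf char}. Your write-up simply spells out the details the paper leaves implicit, in particular that $m$-CDC subsumes CDC so the $(3)\Rightarrow(1)$ direction carries over unchanged.
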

    
\begin{proof}
    \Cref{3: Lem filtration m-CDC} shows that the condition (3) in \Cref{3: Lem K4m1 refutation pattern} can be strengthened such that the stable embedding $h: \A_i \inj_{D_i} \C$ satisfies $m$-CDC for $D_i$. Thus, combined with \Cref{3: Thm mscf char}, the statement follows.
\end{proof}

\begin{corollary} 
    Let $m \geq 1$. Any logic $L \supseteq \Kff{m+1}{1}$ is axiomatizable over $\Kff{m+1}{1}$ by $m$-stable canonical formulas. Moreover, if $L$ is finitely axiomatizable over $\Kff{m+1}{1}$, then $L$ is axiomatizable over $\Kff{m+1}{1}$ by finitely many $m$-stable canonical formulas.
\end{corollary}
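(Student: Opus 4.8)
The plan is to follow the proof of \Cref{3: Thm scf complete 2} essentially verbatim, substituting the $m$-stable canonical formula characterization \Cref{3: Thm mscf complete} for its stable-canonical-formula analog \Cref{3: Thm scf complete}. First I would write an arbitrary logic $L \supseteq \Kff{m+1}{1}$ as $L = \Kff{m+1}{1} + \{\phi_i : i \in I\}$ for some family $\{\phi_i : i \in I\}$ of formulas. Then, by \Cref{3: Thm mscf complete}, each $\phi_i$ becomes equivalent, over the class of s.i.~$\Kff{m+1}{1}$-algebras, to a finite conjunction $\bigwedge\{\gamma_+^m(\A_{ij}, D_{ij}) : 1 \leq j \leq n_i\}$ of $m$-stable canonical formulas, with each $\A_{ij}$ a finite s.i.~$\Kff{m+1}{1}$-algebra and $D_{ij} \subseteq A_{ij}$.

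The key step is then to lift this algebra-by-algebra equivalence to an equality of logics. For any s.i.~$\Kff{m+1}{1}$-algebra $\B$ I would observe that $\B \models L$ iff $\B \models \phi_i$ for all $i$, which by the previous step holds iff $\B \models \gamma_+^m(\A_{ij}, D_{ij})$ for all $i \in I$ and $1 \leq j \leq n_i$. Since every variety is generated by its subdirectly irreducible members, a logic is determined by the class of its s.i.~algebras, so this equivalence across all s.i.~$\Kff{m+1}{1}$-algebras delivers $L = \Kff{m+1}{1} + \{\gamma_+^m(\A_{ij}, D_{ij}) : i \in I, 1 \leq j \leq n_i\}$. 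For the second claim I would note that if $L$ is finitely axiomatizable over $\Kff{m+1}{1}$ then $I$ can be taken finite, and since each $i$ contributes only finitely many formulas by \Cref{3: Thm mscf complete}, the resulting axiomatization is finite.

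I do not expect any real obstacle, because all the substantive work has already been done upstream: the $m$-CDC strengthening of Gabbay's definable filtration in \Cref{3: Lem filtration m-CDC}, the refutation-pattern construction of \Cref{3: Lem K4m1 refutation pattern}, and the validity characterization \Cref{3: Thm mscf char}. The corollary is a purely formal consequence of \Cref{3: Thm mscf complete}, obtained in exactly the same manner as \Cref{3: Thm scf complete 2} follows from \Cref{3: Thm scf complete}. The only point deserving a moment's care is the appeal to subdirectly irreducible algebras determining the logic, but this is standard and is already implicitly used in the proof of \Cref{3: Thm scf complete 2}.
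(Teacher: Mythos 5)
Your proposal is correct and follows exactly the paper's intended argument: the paper proves this corollary precisely by rerunning the proof of \Cref{3: Thm scf complete 2} with \Cref{3: Thm mscf complete} in place of \Cref{3: Thm scf complete}, including the appeal to the fact that a logic is determined by its s.i.~algebras and the same handling of the finitely axiomatizable case. No gaps; your spelled-out version matches what the paper leaves as ``a similar argument.''
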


\begin{proof}
    This follows from \Cref{3: Thm mscf complete} by a similar argument as in the proof of \Cref{3: Thm scf complete 2}.
\end{proof}

\section{Future work} \label{Sec 7}

As we have discussed in this paper, admitting definable filtration and being able to define the master modality are the key points to generalize the theory of stable canonical formulas. Given the success with the logics $\Kff{m+1}{1}$, one might expect a similar result for the logic $\wKf = \K + \Dia\Dia p \to p \lor \Dia p$ and other pre-transitive logics $\Kff{m}{n}$. However, even though $\wKf$ has the fmp \cite{SpectralT0SpacesDSemantics2011}, it is unknown whether it admits definable filtration (see, e.g., \cite[Section 5]{TwoTypesFiltrations2025}). Stable canonical rules work well for $\S_\K$ because rules have the master modality built into their semantics. Since the master modality is not definable in $\K$, it seems quite challenging to define stable canonical formulas for $\K$ in a meaningful way. We leave it open to generalize stable canonical formulas to non-transitive logics other than $\Kff{m+1}{1}$.

As mentioned in the introduction, a noticeable application of stable canonical rules and canonical rules is to study \emph{admissible} rules. Je\v r\'abek \cite{Jeřábek_2009} used canonical rules, and Bezhanishvili et al. \cite{admissiblebases} (see also \cite[Chapter 6]{TakahashiThesis}) used stable canonical rules to give an alternative proof of the decidability of admissibility for transitive modal logics such as $\Kf$ and $\Sf$, and the intuitionistic logic $\IPC$. We leave it for future research whether stable canonical rules or $m$-stable canonical rules can be used to study the admissibility in the pre-transitive logics $\Kff{m+1}{1}$.

\subsection*{}
\begin{acknowledgements}
I am very grateful to Nick Bezhanishvili for his supervision of the Master's thesis and his helpful comments on this paper. I would also like to thank Yde Venema for pointing out the correct reference on definable filtration to me. Finally, I would like to thank the two anonymous reviewers for their valuable comments, which significantly improved the paper, in particular, Theorem 5.17. I was supported by the Student Exchange Support Program (Graduate Scholarship for Degree Seeking Students) of the Japan Student Services Organization and the Student Award Scholarship of the Foundation for Dietary Scientific Research.
    
\end{acknowledgements}

\printbibliography[heading=bibintoc,title=References]

\end{document}